\let\oldAfu\Afu
\let\newAfu\CC
\let\Afu\newAfu
\newcommand{\GGm}{\CC_v^*}
\newcommand{\cDXv}{\cD_X[v]\langle\partial_v\rangle}
\newcommand{\cDXu}{\cD_X[u]\langle\partial_u\rangle}
\begin{document}

\frontmatter
\title[On the irregular Hodge filtration]
{On the irregular Hodge filtration of exponentially twisted mixed Hodge modules}

\author[C.~Sabbah]{Claude Sabbah}
\address[C.~Sabbah]{UMR 7640 du CNRS\\
Centre de Mathématiques Laurent Schwartz\\
\'Ecole polytechnique\\
F--91128 Palaiseau cedex\\
France} \email{Claude.Sabbah@polytechnique.edu}
\urladdr{http://www.math.polytechnique.fr/~sabbah}
\thanks{(C.S.) This research was supported by the grants ANR-08-BLAN-0317-01 and ANR-13-IS01-0001-01 of the Agence nationale de la recherche.}

\author[J.-D.~Yu]{Jeng-Daw Yu}
\address[J.-D.~Yu]{Department of Mathematics\\
National Taiwan University\\
Taipei 10617\\
Taiwan}
\email{jdyu@ntu.edu.tw}
\urladdr{http://homepage.ntu.edu.tw/~jdyu/}
\thanks{(J.-D.Y.) This work was partially supported by the NCTS and the MoST of Taiwan.}

\subjclass{14F40, 32S35, 32S40}
\keywords{Irregular Hodge filtration, twisted de~Rham complex}

\begin{abstract}
Given a mixed Hodge module $\ccN$ and a meromorphic function $f$ on a complex manifold, we associate to these data a filtration (the irregular Hodge filtration) on the exponentially twisted holonomic module $\ccN\otimes\ccE^f$, which extends the construction of \cite{E-S-Y13}. We show the strictness of the push-forward filtered $\cD$\nobreakdash-module through any projective morphism $\pi:X\to Y$, by using the theory of mixed twistor $\cD$-modules of T.\,Mochizuki. We consider the example of the rescaling of a regular function $f$, which leads to an expression of the irregular Hodge filtration of the Laplace transform of the Gauss-Manin systems of $f$ in terms of the Harder-Narasimhan filtration of the Kontsevich bundles associated with $f$.
\end{abstract}

\subjclass{14F40, 32S35, 32S40}
\keywords{Irregular Hodge filtration, twisted de~Rham complex, holonomic D-module, exponential twist, mixed Hodge module, mixed twistor D-module, Kontsevich bundle, Gauss-Manin system, Laplace transformation}

\maketitle
\vspace*{-2\baselineskip}%
{\let\\\relax\tableofcontents}

\mainmatter
\section{Introduction}
\subsection{The irregular Hodge filtration}\label{subsec:irregHodge}
The category of mixed Hodge modules on complex manifolds, as constructed by M.\,Saito \cite{MSaito87}, is endowed with the standard operations (push-forward by projective morphisms, pull-back by holomorphic maps, duality, etc.). In particular, the structure of the Hodge filtration in this category is well-behaved through these operations. For a meromorphic function~$f$ on a complex manifold~$X$, holomorphic on the complement~$U$ of a divisor~$D$ of~$X$, and for a mixed Hodge module with underlying filtered $\cD_X$-module $(\ccN,F_\bbullet\ccN)$, we will define an ``irregular Hodge filtration'', which is a filtration on the exponentially twisted holonomic $\cD_X$-module $\ccN\otimes\ccE^f$, where~$\ccE^f$ denote the $\cO_X$-module $\cO_X(*D)$ equipped with the twisted integrable connection $\rd+\rd f$, that we regard as a left holonomic $\cD_X$-module. We note that, although~$\ccN$ is known to have regular singularities, $\ccN\otimes\ccE^f$ has irregular singularities along the components of the divisor $D$ where~$f$ takes the value~$\infty$, hence cannot underlie a mixed Hodge module. Therefore, the irregular Hodge filtration we define on $\ccN\otimes\ccE^f$, generalizing the definition of Deligne \cite{Deligne8406}, and then \cite{Bibi08}, \cite{Yu12}, \cite{E-S-Y13}, cannot be the Hodge filtration of a mixed Hodge module in the sense of \cite{MSaito87}. There is an algebraic variant of this setting, where we assume that~$f$ is a rational function on a complex smooth variety $X$.

\begin{remarque}\label{rem:ESY}
Such a filtration has been constructed in \cite{E-S-Y13} in the following cases:
\begin{enumeratea}
\item\label{enum:ESY1}
$f$ extends as a morphism $X\to\PP^1$, $D$ is a normal crossing divisor and the filtered $\cD_X$-module $(\ccN,F_\bbullet\ccN)$ is equal to $(\cO_X(*D),F_\bbullet\cO_X(*D))$, where the filtration is given by the order of the pole \cite{Deligne70}. In such a setting, the filtration was denoted $F_\bbullet(\ccE^f(*H))$, where $H$ is the union of the components of $D$ not in $f^{-1}(\infty)$;
\item\label{enum:ESY2}
$X=Y\times\PP^1$, $f$ is the projection to $\PP^1$ and $(\ccN,F_\bbullet\ccN)$ underlies an arbitrary mixed Hodge module.
\end{enumeratea}
\end{remarque}

\begin{definition}\label{def:Rees}
By a \emph{good filtration $F_\bbullet$ indexed by $\QQ$} of a $\cD_X$-module $\ccN$, we mean a finite family $F_{\alpha+\bbullet}\ccN$ of good filtrations\footnote{As usual, this is understood with respect to the filtration by the order $F_\bbullet\cD_X$, and goodness means that $F_{\alpha+p}\ccN=0$ $p\ll0$ locally on $X$, and $\gr^F_{\alpha+\cbbullet}\ccN$ is $\gr^F\!\cD_X$-coherent.} indexed by $\ZZ$, parametrized by $\alpha$ in a finite subset $A$ of $[0,1)\cap\QQ$, such that $F_{\alpha+p}\ccN\subset F_{\beta+q}\ccN$ for all $\alpha,\beta\in A$ and $p,q\in\ZZ$ satisfying $\alpha+p\leq\beta+q$.

We can thus regard it as a single increasing filtration indexed by $\QQ$, such that $F_{\alpha+p}\ccN/F_{<\alpha+p}\ccN=0$ for any $\alpha,p$, except for $\alpha$ in a finite set $A$ of $[0,1)\cap\QQ$.

For each~$\alpha$, the \emph{Rees module} $R_{F_{\alpha+\bbullet}}\ccN$ is the graded module defined as $\sum_pF_{\alpha+p}\ccN\hb^p$, where~$\hb$ is a new (Laurent) polynomial variable. Then we set $R_{F_\bbullet}\ccN:=\bigoplus_{\alpha\in A}R_{F_{\alpha+\bbullet}}\ccN$. We can then regard a (usual) good filtration indexed by $\ZZ$ as a good filtration indexed by $\QQ$.
\end{definition}

\begin{theoreme}\label{th:1main}
Let $f$ be a meromorphic function on $X$, holomorphic on $U=X\setminus\nobreak D$, where $D$ is a divisor in $X$. For each filtered holonomic $\cD_X$-module $(\ccN,F_\bbullet\ccN)$ underlying a mixed Hodge module one can define canonically and functorially a good $F_\bbullet\cD_X$-filtration $F^\irr_\bbullet(\ccN\otimes\ccE^f)$ indexed by $\QQ$ which satisfies the following properties:
\begin{enumerate}
\item\label{th:1main0}
Through the canonical isomorphism $(\ccN\otimes\ccE^f)_{|U}=\ccN_{|U}$, we have $F^\irr_\bbullet(\ccN\otimes\nobreak\ccE^f)_{|U}=F_\bbullet\ccN_{|U}$.
\item\label{th:1main1}
For each morphism $\varphi:(\ccN_1,F_\bbullet\ccN_1)\to(\ccN_2,F_\bbullet\ccN_2)$ underlying a morphism of mixed Hodge modules, the corresponding morphism
\[
\varphi^f:(\ccN_1\otimes\ccE^f,F^\irr_\bbullet(\ccN_1\otimes\ccE^f))\to(\ccN_2\otimes\ccE^f,F^\irr_\bbullet(\ccN_2\otimes\ccE^f))
\]
is strictly filtered.
\item\label{th:1main2}
For each $\alpha\in[0,1)$, the push-forward $\pi_+(\ccN\otimes\nobreak\ccE^f,F^\irr_{\alpha+\bbullet}(\ccN\otimes\nobreak\ccE^f))$ by any projective morphism \hbox{$\pi:X\to Y$} is strict.
\item\label{th:1main3}
Let $\pi:X\to Y$ be a projective morphism and let $h$ be a meromorphic function on $Y$, holomorphic on $V=Y\setminus D_Y$ for some divisor $D_Y$ in $Y$. Assume that $D_X:=\nobreak\pi^{-1}(D_Y)$ is a divisor, and set $U=\pi^{-1}(V)$ and $f=h\circ\pi$. Then the cohomology of the filtered complex $\pi_+(\ccN\otimes\nobreak\ccE^f,F^\irr_\bbullet(\ccN\otimes\nobreak\ccE^f))$, which is strict by \eqref{th:1main2}, satisfies
\[
\cH^j\pi_+R_{F^\irr}(\ccN\otimes\ccE^f)=R_{F^\irr}\big[(\cH^j\pi_+\ccN)\otimes\ccE^h\big].
\]
\item\label{th:1main4}
In cases \ref{rem:ESY}\eqref{enum:ESY1} and \ref{rem:ESY}\eqref{enum:ESY2} above, the filtration $F^\irr_\bbullet$ coincides with the filtration $F^\Del_\bbullet$ constructed in \cite{E-S-Y13}.
\end{enumerate}
\end{theoreme}

The proof of the theorem is given in \S\ref{subsec:proofthmain} and relies much on the theory of mixed twistor $\cD$-modules of T.\,Mochizuki \cite{Mochizuki11}. This theory allows one to simplify and generalize some of the arguments given in \cite{E-S-Y13}, by giving a general framework to treat, from the Hodge point of view, irregular $\cD$-modules like $\ccE^f$. By specializing~\eqref{th:1main2} to the case where $Y$ is a point we obtain:

\begin{corollaire}\label{cor:E1degen}
For $(\ccN,F_\bbullet\ccN)$ underlying a mixed Hodge module on a smooth projective variety $X$, the spectral sequence attached to the hypercohomology of the filtered de~Rham complex $F^\irr_{\alpha+\bbullet}\DR(\ccN\otimes\nobreak\ccE^f)$ degenerates at~$E_1$ for each $\alpha\in[0,1)$.\qed
\end{corollaire}

\begin{remarque}
The assumption that $D:=X\setminus U$ is a divisor is not mandatory, but simplifies the statement. In general, higher cohomology modules supported on $X\setminus U$ may appear for $\ccN\otimes\ccE^f$.
\end{remarque}

\subsection{Rescaling a function}\label{subsec:rescaling}
The case \ref{rem:ESY}\eqref{enum:ESY1} is essentially the only case where we can give an explicit expression for $F_\bbullet^\irr\ccE^f(*H)$ (see \cite{E-S-Y13}, according to \ref{th:1main}\eqref{th:1main4}). Recall that we consider a smooth complex projective variety $X$ together with a morphism \hbox{$f:X\to\PP^1$}. We set $P_\red=f^{-1}(\infty)$ and $P=f^*(\infty)$. We also introduce a supplementary divisor $H$ (which could be empty) having no common components with $P_\red$, and we assume that $D:=P_\red\cup H$ has normal crossings. We set $U=X\setminus D$. We will also consider $X$ as a complex projective manifold equipped with its analytic topology, that we will denote $X^\an$ when the context is not clear.

Our main example in this article, that we consider in Part \ref{part:2}, is that of the rescaling of the function \hbox{$f:X\to\PP^1$}. The rescaled function with rescaling parameter~$v$ is the function \hbox{$vf:U\times\Afu_v\to\Afu$}, defined by $(x,v)\mto vf(x)$. This function does not extend as a morphism to $X\times\PP^1\to\PP^1$.

We consider the projective line $\PP^1_v$ covered by two charts $\Afu_v$ and $\Afu_u$ whose intersection is denoted by~$\GGm$, and we regard $vf$ as a rational function $vf:X\times\PP^1_v\text{ - -}\to\PP^1$. We are therefore in the situation in the beginning of the previous subsection, with underlying space $\ccX:=X\times\PP^1_v$ and reduced pole divisor $\ccP_\red:=(P_\red\times\PP^1_v)\cup (X\times\infty)$, where~$\infty\in\PP^1_v$ denotes the point $u=0$. We will also set $\ccP=(P\times\PP^1)+(X\times\{\infty\})$, $\ccH=H\times\PP^1_v$ and $\ccD=\ccP_\red\cup\ccH$.

We denote by $\ccE^{(v{:}u)f}(*\ccH)$ the $\cO_{X\times\PP^1_v}$-module $\cO_{X\times\PP^1_v}(*\ccD)$ equipped with the connection $\rd+\rd(vf)$ (on the open set $X\times\Afu_v$) and $\rd+\rd(f/u)$ (on the open set $X\times\Afu_u$). We denote its restriction to the corresponding open subsets by $\ccE^{vf}(*\ccH)$ and $\ccE^{f/u}(*\ccH)$ respectively. According to Theorem \ref{th:1main}, it is equipped with an irregular Hodge filtration. We make it partly explicit in Theorem \ref{th:FFirr} (only partly, because around $u=0$ we only make explicit its restriction to the Brieskorn lattice, \cf \S\ref{subsec:computuchart}).

\Subsection{Variation of the irregular Hodge filtration and the Kontsevich bundles}\label{subsec:varirrhodgefiltrkontsevichbundles}

Regarding now $v\in\CC^*$ as a parameter and considering the push-forward by \hbox{$q:X\times\PP^1_v\to\PP^1_v$} of the rescaling $\ccE^{(v:u)f}(*\ccH)$, our aim is to describe the variation with~$v$ of the irregular Hodge filtration $F^\irr_\bbullet\bH^k\big(U,(\Omega_U^\cbbullet,\rd+v\rd f)\big)$ considered in \cite{E-S-Y13}, and its limiting behaviour when $v\to0$ or $v\to\infty$.

The irregular Hodge filtration is conveniently computed with the Kontsevich complex. Recall that M.\,Kontsevich has associated to $f:X\to\PP^1$ as in \S\ref{subsec:rescaling} and to $k\geq0$ the subsheaf~$\Omega_f^k$ of $\Omega_X^k(\log D)$ consisting of logarithmic $k$-forms $\omega$ such that~$\rd f\wedge\omega$ remains a logarithmic $(k+1)$-form, a condition which only depends on the restriction of $\omega$ to a neighbourhood of the reduced pole divisor $P_\red=f^{-1}(\infty)$. For each $\alpha\in[0,1)$, let us denote by $[\alpha P]$ the divisor supported on~$P_\red$ with multiplicity $[\alpha e_i]$ on the component~$P_i$ of $P:=f^*(\infty)$ with multiplicity~$e_i$. One can also define a subsheaf $\Omega_f^k(\alpha)$ of $\Omega_X^k(\log D)([\alpha P])$ by the condition that $\rd f\wedge\omega$ is a section of $\Omega_X^{k+1}(\log D)([\alpha P])$, so that the case $\alpha=0$ is that considered by Kontsevich. Clearly, only those $\alpha$ such that $\alpha e_i\in\ZZ$ for some $i$ are relevant. If $f$ is the constant map, then $\Omega_f^k=\Omega_X^k(\log D)$. One of the main results of \cite{E-S-Y13}, suggested and proved by Kontsevich when $P=P_\red$, is the equality, for each $k$,
\[
\dim\bH^k\big(U,(\Omega_U^\cbbullet,\rd+\rd f)\big)=\sum_{p+q=k}\dim\bH^q(X,\Omega_f^p(\alpha)).
\]
More precisely, for each pair $(u,v)\in\CC^2$ and each $\alpha\in[0,1)$ one can form a complex $(\Omega_f^\cbbullet(\alpha),u\rd+\nobreak v\rd f)$ and it is shown that the dimension of the hypercohomology $\bH^k\big(X,(\Omega_f^\cbbullet(\alpha),u\rd+v\rd f)\big)$ is independent of $(u,v)\in\CC^2$ and $\alpha$ and is equal to the above value. The \emph{irregular Hodge numbers} are then defined~as
\begin{equation}\label{eq:irregHodgenumber}
h_\alpha^{p,q}(f)=\dim\bH^q(X,\Omega_f^p(\alpha)).
\end{equation}
We have $h_\alpha^{p,q}(f)\neq0$ only if $p,q\geq0$ and $p+q\leq2\dim X$. (\cf Remark \ref{rem:G0G0}\eqref{rem:G0G03} for the mirror symmetry motivations related to the irregular Hodge filtration.) If $f$ is the constant map, we recover the results of Deligne \cite{Deligne70,DeligneHII}:
\begin{align*}
\dim\bH^k(U,\CC)=\dim\bH^k\big(U,(\Omega_U^\cbbullet,\rd)\big)&=\dim\bH^k\big(X,(\Omega_X^\cbbullet(\log D),\rd)\big)\\
&=\sum_{p+q=k}\dim H^q\big(X,\Omega_X^p(\log D)\big).
\end{align*}
The Hodge numbers reduce here to $h^{p,q}(X,D)=\dim H^q\big(X,\Omega_X^p(\log D)\big)$.

Following the suggestion of M.\,Kontsevich, let us define the \emph{Kontsevich bundles}~$\cK^k(\alpha)$ on $\PP^1_v$. We~set
\begin{equation}\label{eq:Kontbundles}
\begin{split}
\cK^k_v(\alpha)&:=\bH^k\big(X,(\Omega_f^\cbbullet(\alpha)[v],\rd+v\rd f)\big),\\
\cK^k_u(\alpha)&:=\bH^k\big(X,(\Omega_f^\cbbullet(\alpha)[u],u\rd+\rd f)\big).
\end{split}
\end{equation}
Using the isomorphism $\CC[u,u^{-1}]\isom\CC[v,v^{-1}]$ given by $u\mto v^{-1}$, we have a natural quasi-isomorphism
\begin{equation}\label{eq:varphi}
u^{\cbbullet}:(\Omega_f^\cbbullet(\alpha)[v,v^{-1}],\rd+v\rd f)\isom(\Omega_f^\cbbullet(\alpha)[u,u^{-1}],u\rd+\rd f)
\end{equation}
induced by the multiplication by $u^p$ on the $p$th term of the first complex. Since we know by the above mentioned results that both modules $\cK^k_v(\alpha),\cK^k_u(\alpha)$ are free over their respective ring $\CC[v]$ or $\CC[u]$, the identification
\[
\bH^k(u^{\cbbullet}):\bH^k\big(X,(\Omega_f^\cbbullet(\alpha)[v,v^{-1}],\rd+v\rd f)\big)\simeq\bH^k\big(X,(\Omega_f^\cbbullet(\alpha)[u,u^{-1}],u\rd+\rd f)\big)
\]
allows us to glue these modules as a bundle $\cK^k(\alpha)$ on $\PP^1_v$. The $E_1$-degeneration property can be expressed by the injectivity
\begin{equation}\label{eq:E1degenK}
\begin{aligned}
\bH^k\big(X,\sigma^{\geq p}(\Omega_f^\cbbullet(\alpha)[v],\rd+v\rd f)\big)&\hto\bH^k\big(X,(\Omega_f^\cbbullet(\alpha)[v],\rd+v\rd f)\big),\\
\bH^k\big(X,\sigma^{\geq p}(\Omega_f^\cbbullet(\alpha)[u],u\rd+\rd f)\big)&\hto\bH^k\big(X,(\Omega_f^\cbbullet(\alpha)[u],u\rd+\rd f)\big),
\end{aligned}
\end{equation}
where $\sigma^{\geq p}$ denotes the stupid truncation. Since this truncation is compatible with the gluing $u^{\cbbullet}$, this defines a filtration $\sigma^{\geq p}\cK^k(\alpha)$. When restricted to $\CC^*_v$, this produces the family $F_\alpha^{\irr,p}\bH^k\big(U,(\Omega_U^\cbbullet,\rd+v\rd f)\big)$.

We also notice that the $p$th graded bundle is then isomorphic to $\cO_{\PP^1}(p)^{h_\alpha^{p,k-p}(f)}$, so this filtration is the Harder-Narasimhan filtration $F^\cbbullet\cK^k(\alpha)$ and the Birkhoff-Grothendieck decomposition of $\cK^k(\alpha)$ reads
\begin{equation}\label{eq:bgkontsevich}
\cK^k(\alpha)\simeq\bigoplus_{p=0}^k\cO_{\PP^1}(p)^{h_\alpha^{p,k-p}(f)}.
\end{equation}
In particular, all slopes of $\cK^k(\alpha)$ are nonnegative and we have
\[
\deg\cK^k(\alpha)=\sum_{p=0}^kp\cdot h_\alpha^{p,k-p}(f).
\]

We will show (\cf Lemma \ref{lem:omegaalphau}) that each $\cK^k(\alpha)$ is naturally equipped with a meromorphic connection having a simple pole at $v=0$ and a double pole at most at $v=\infty$. It follows from a remark due to Mochizuki (\cf Remark \ref{rem:HSMochizuki}) that the Harder-Narasimhan filtration satisfies the Griffiths transversality condition with respect to the connection. This is a concrete description of the variation of the irregular Hodge filtration (Corollary \ref{cor:HNfiltr}).

Our main result concerns the limiting behaviour of the variation of the irregular Hodge filtration when $v\to0$, expressed in this model.

\begin{theoreme}\label{th:2main}\mbox{}
\begin{enumerate}
\item\label{th:2main2}
The meromorphic connection $\nabla$ on $\cK^k(\alpha)$ has a logarithmic pole at $v=0$ and the eigenvalues of its residue $\Res_{v=0}\nabla$ belong to $[-\alpha,-\alpha+1)\cap\QQ$.
\item\label{th:2main4}
On each generalized eigenspace of $\Res_{v=0}\nabla$ the nilpotent part of the residue strictly shifts by $-1$ the filtration naturally induced by the Harder-Narasimhan filtration.
\end{enumerate}
\end{theoreme}

The proof of Theorem \ref{th:2main}, which is sketched in \S\ref{sec:intropart2}, does not remain however in the realm of Kontsevich bundles. It is obtained through an identification of the Kontsevich bundles with the bundles $\cH^k(\alpha)$ obtained from the push-forward $\cD$-modules $\cH^k$ of $\ccE^{(v{:}u)f}(*\ccH)$ (\cf\S\ref{subsec:rescaling}) by the projection $q:X\times\PP^1_v\to\PP^1_v$. Recall that $\cH^k:=R^kq_*\DR_{X\times\PP^1_v/\PP^1_v}\ccE^{(v{:}u)f}(*\ccH)$ is a holonomic $\cD_{\PP^1_v}$\nobreakdash-module for each~$k$. It~is equipped with its irregular Hodge filtration $F^\irr_\bbullet\cH^k$ obtained by push-forward, according to Theorem \ref{th:1main}\eqref{th:1main2}. We define the bundles $\cH^k(\alpha)$ by using this filtration, and the main comparison tools with the Kontsevich bundles $\cK(\alpha)$ are provided by Theorems \ref{th:4main} and \ref{th:5main}.

\Subsection{Motivations and open questions}
We have already discussed in \cite[Introduction]{E-S-Y13} the motivation coming from estimating $p$-adic eigenvalues of Frobenius (Deligne) and that coming from mirror symmetry (Kontsevich). We list below some more related questions and possible applications for further investigations.

\subsubsection*{Numerical invariants of mixed twistor $\cD$-modules}
The theory of mixed twistor $\cD$\nobreakdash-modules, as developed by T.\,Mochizuki \cite{Mochizuki11}, is the convenient framework to treat wild Hodge theory. However, this theory produces very few numerical invariants having a Hodge flavor (like Hodge numbers, degrees of Hodge bundles, etc.). The~irregular Hodge filtration, when it does exist, is intended to provide such invariants. Let us emphasize that, contrary to classical Hodge theory, the irregular Hodge filtration is only a by-product of the mixed twistor structure, but is not constitutive of its definition.

Is there a suitable well-behaved category of \emph{wild Hodge $\cD$-modules} with a forgetful functor to the category of mixed twistor $\cD$-modules? What about the expected functorial and degeneration properties? The exponentially twisted Hodge modules should give rise to an object in such a category. Moreover, following the definition due to Simpson of systems of Hodge bundles, we can expect that the objects in this suitable category should carry an internal symmetry (a $\CC^*$-action in the case of tame twistor $\cD$-modules). A possible approach to this question would be to search for the desired category as the category of integrable mixed twistor $\cD$-modules endowed with supplementary structures on the object obtained by rescaling the twistor variable.

\subsubsection*{Analogies with Hodge theory}
Going further in the direction of Hodge theory, one may wonder whether the irregular Hodge filtration, when it exists, shares similar properties with the usual Hodge filtration on mixed Hodge modules. For example, for a morphism $f:X\to\PP^1$, the $\cD_X$-module $\ccE^f$ underlies a pure integrable twistor $\cD$-module (\cf Proposition \ref{prop:EXf}\eqref{prop:EXf3}) and is equipped with an irregular Hodge filtration (\cf Theorem \ref{th:1main} with $\ccN=(\cO_X,\rd)$). Let $\pi:X\to Y$ be a projective morphism. According to the decomposition theorem for pure twistor $\cD$-modules \cite{Mochizuki08}, the push-forward $\pi_+\ccE^f$ decomposes, together with its twistor structure, into a direct sum of possibly shifted simple holonomic $\cD$-modules. One can wonder whether the analogues of Koll\'ar's conjectures (proved by M.\,Saito \cite{MSaito91c}) hold for the irregular Hodge filtration of $\ccE^f$.

Also the question of the limiting behaviour, in the sense of Schmid, of the irregular Hodge filtration raises interesting questions. We treat the case of a tame degeneration (the case of $\ccE^{vf}$ when $v\to0$) in \S\ref{sec:Evf}, but the case of a non-tame degeneration (like $u\to0$ in \S\ref{sec:Euf}) remains unclear in general. We expect that the good behaviour (by definition) of the mixed twistor modules by taking irregular nearby cycles along a holomorphic function should lead to specific limiting properties for the irregular Hodge filtration, when it exists.

\subsubsection*{Extended motivic-exponential $\cD$-modules}
Recall that, following \cite[6.2.4]{B-B-D81}, one defines the notion of a simple regular holonomic $\cD$-module \emph{of geometric origin} on a smooth complex algebraic variety $X$ if it appears as a simple subquotient in a regular holonomic $\cD_X$-module obtained by using only standard geometric functors starting from the case where the variety is a point. In particular, such a simple regular holonomic $\cD_X$-module is a simple summand of a regular holonomic $\cD$-module underlying a polarizable $\QQ$-Hodge module of some weight, as defined by M.\,Saito \cite{MSaito86,MSaito87}. It therefore underlies a simple complex polarizable Hodge module. In other words, there exists an irreducible algebraic closed subvariety $Z\subset X$, a Zariski smooth open set $Z^\circ\subset Z$, and an irreducible local system on $Z^\circ$, underlying a polarizable complex variation of Hodge structure (\cf \cite{Deligne87}), such that this regular holonomic $\cD_X$-module corresponds, via the inverse Riemann-Hilbert correspondence, to the intermediate extension of this local system by the inclusion $Z^\circ\hto X$. In particular, it comes equipped with a good filtration (that induced by the polarizable $\QQ$-Hodge module) and the corresponding filtered $\cD$-module is a direct summand of the filtered $\cD$-module underlying the polarizable $\QQ$-Hodge module.

M.\,Kontsevich \cite{Kontsevich09} has defined the category of \emph{motivic-exponential $\cD$-modules} by adding the twist by~$\ccE^f$ for any rational function~$f$ to the standard permissible operations on regular holonomic $\cD$-modules of geometric origin on algebraic varieties. By \cite{Mochizuki11}, any such motivic-exponential $\cD$-module underlies a pure wild twistor $\cD$-module (\cf\cite{Mochizuki08}).

There is also the category of \emph{extended motivic-exponential $\cD$-modules}, by authorizing extensions of such objects, but we will not consider it here.

One can expect that any motivic-exponential $\cD$-module on a complex alge\-braic \hbox{variety} is endowed with a canonical irregular Hodge filtration, and that this filtration has a good behaviour with respect to the various permissible functors (the six operations of Grothendieck, the nearby and vanishing cycles along a function, and the twist by some $\ccE^f$). Theorem \ref{th:1main} is a step toward this expected result.

\begin{remarque}[Hodge filtration in presence of very irregular singularities]
The holonomic $\cD_Y$\nobreakdash-modules one obtains as $\cH^k\pi_+(\ccN\otimes\ccE^f)$ when $\pi$ is any projective morphism may have irregular singularities much more complicated than an exponential twist of a regular singularity. For example, if $Y$ is a disc, it is shown in \cite{Roucairol07} that any formal meromorphic connection at $0\in Y$ can be produced as the formalization at the origin of a connection obtained by the procedure of Th.\,\ref{th:1main}\eqref{th:1main2} for some suitable~$\ccN$ on $X=Y\times\PP^1$.

However, these $\cD_Y$-modules come equipped with a good filtration $F_\bbullet\cH^k\pi_+(\ccN\otimes\nobreak\ccE^f)$ obtained by pushing-forward $F^\irr_\bbullet(\ccN\otimes\ccE^f)$.
If $Y$ is projective and if for example $\cH^k\pi_+(\ccN\otimes\ccE^f)=0$ except for $k=k_o$ then, according to Corollary \ref{cor:E1degen}, we obtain the degeneration at $E_1$ of the spectral sequence attached to the hypercohomology of the filtered de~Rham complex $F_\bbullet\DR\cH^{k_o}\pi_+(\ccN\otimes\ccE^f)$. Examples of this kind can be obtained by the procedure of \cite{Roucairol07} with arbitrary complicated irregular singularities.
\end{remarque}

\subsubsection*{Acknowledgments}
We thank Maxim Kontsevich for suggesting us the properties stated in Theorem \ref{th:2main} and Takuro Mochizuki for explaining us some of his results on mixed twistor $\cD$-modules and his useful comments. In particular, he suggested various improvements and simplifications to the first version of this article. We owe him the statement of Theorem \ref{th:4main}. Last but not least, we thank Hélène Esnault for the many discussions we had together and for many suggestions and questions on the subject of this article.

\part{Irregular Hodge filtration and twist by \texorpdfstring{$\ccE^f$}{Ef}}

\section{Exponentially regular holonomic \texorpdfstring{$\cD$}{D}-modules}

\subsection{The graph construction}\label{subsec:graph}
We refer to the expository book \cite{H-T-T08} or the expository article \cite{Mebkhout04} for basic properties of regular holonomic $\cD$-modules.

Let $X$ be a complex manifold and let $P_\red$ be a reduced divisor in $X$. We set $U=X\setminus P_\red$. Let $f$ be a meromorphic function on $X$ which is holomorphic on $U$ whose pole divisor $P$ is \emph{exactly} supported by $P_\red$, \ie $f$ takes the value $\infty$ generically on each irreducible component of $P_\red$. By definition, locally analytically on $P_\red$, the function~$f$ can be written as the quotient of two holomorphic functions with no common factor, such that the zero divisor may intersect $P_\red$ in codimension two in $X$ at most. There exists a proper modification $\pi:X'\to X$ with $X'$ smooth, which is an isomorphism over~$U$, and a holomorphic map $f':X'\to\PP^1_t$, such that $f'_{|\pi^{-1}(U)}=f\circ \pi_{|\pi^{-1}(U)}$. The pole divisor $P'$ of~$f'$ satisfies $P'_\red\subset \pi^{-1}(P_\red)=:D'$, and the inclusion may be strict. Let $i_f:U\hto U\times\Afu_t$ denote the graph inclusion of~$f$. The closure $\ov U_f$ of $U_f:=i_f(U)$ in $X\times\PP^1_t$ is a closed analytic set of codimension one, equal to the projection by the proper modification $\pi\times\id:X'\times\PP^1_t\to X\times\PP^1_t$ of the graph $i_{f'}(X')$. The projection $p:X\times\PP^1_t\to X$ induces a proper modification $\ov U_f\to X$, and the pull-back of~$U$ in~$\ov U_f$ maps isomorphically to~$U$. In particular, we have $(X\times\infty)\cap\ov U_f\subset (P_\red\times\PP^1_t)\cap \ov U_f$. We summarize this in the following diagram.
\begin{equation}\label{eq:diagram}
\begin{array}{c}
\xymatrix@C=1cm@R=.6cm{
X'\ar@/_2pc/[rrrddd]_(.2){f'}|!{[ddr];[dr]}\hole|!{[ddr];[ddrr]}\hole
\ar@/^2pc/@{^{ (}->}[rr]_(.6){i_{f'}}\ar[d]_{\pi}\ar[r]^-\sim&i_{f'}(X')\ar[d]_{\pi\times\id}\ar@{^{ (}->}[r]&X'\times\PP^1\ar[d]_{\pi\times\id}\ar@/^1pc/[rddd]^{q'}\\
X&\ov U_f\ar@{^{ (}->}[r]&X\times\PP^1\ar[rdd]^q\\
U\ar@/_2.5pc/[rrrd]^f
\ar@/_2pc/@{^{ (}->}[rr]^(.6){i_f}
\ar@{^{ (}->}[u]\ar[r]^-\sim&U_f\ar@{^{ (}->}[u]\ar@{^{ (}->}[r]&U\times\Afu\ar@{^{ (}->}[u]\\
&&&\PP^1
}
\end{array}
\end{equation}

\smallskip
Let $\ccN$ be a holonomic $\cD_X$-module. We assume that $\ccN$ is equal to its localization $\ccN(*P_\red)$ (if not, replace $\ccN$ with $\ccN(*P_\red)$, which is also a holonomic $\cD_X$-module, by a theorem of Kashiwara). The localized pull-back $\ccN':=\pi^+\ccN(*D')$ consists of a single holonomic $\cD_{X'}$\nobreakdash-module. We then recover $\ccN$ as the push-forward $\pi_+\ccN'=\cH^0\pi_+\ccN'$ (see \eg \cite[Prop.\,8.13]{Bibi10}).

Let us set $\ccM'=i_{f',+}\ccN'$. Then $\ccM'=\ccM'(*(D'\times\PP^1_t))$ and since $\Supp\ccM'\cap(X'\times\nobreak\infty)\subset(D'\times\PP^1_t)$, we also have $\ccM'=\ccM'(*[(D'\times\PP^1_t)\cup(X'\times\infty)])$. We clearly have $\ccN'=p'_+\ccM'=\cH^0p'_+\ccM'$.

We set $\ccM=(\pi\times\id)_+\ccM'=\cH^0(\pi\times\id)_+\ccM'$. Then
\[
\ccM=\ccM(*(P\times\PP^1_t))=\ccM(*[(P\times\PP^1_t)\cup(X\times\infty)]),
\]
and $\ccN=p_+\ccM=\cH^0p_+\ccM$. We notice that $\ccM$ does not depend on the choice of $\pi:X'\to X$. We will use the notation $\ccM=i_{f,\oplus}\ccN$, for which we still have $p_+i_{f,\oplus}=\id$, and which coincides with $i_{f,+}\ccN$ if $f$ extends from $X$ to $\PP^1$ (\ie if we can take $\pi=\id$, so that $f'=f$).

\begin{lemme}\label{lem:MNreg}
If $\ccN$ is regular holonomic, so is $\ccM=i_{f,\oplus}\ccN$.
\end{lemme}

\begin{proof}
Indeed, $\ccN'$ is then regular, hence $\ccM'$ also, and then $\ccM$ too.
\end{proof}

\begin{remarque}[The graph construction for mixed Hodge modules]\label{rem:graphMHM}
Let us now start with a filtered $\cD_X$-module $(\ccN,F_\bbullet\ccN)$ underlying a mixed Hodge module \cite{MSaito87}. We still assume that $\ccN=\ccN(*P_\red)$ (if this is not the case, we use the localization functor in the category of mixed Hodge modules to fulfill the assumption). The construction of \S\ref{subsec:graph} can be done for mixed Hodge modules, by using the corresponding functors in the category of mixed Hodge modules. We therefore get a mixed Hodge module $(\ccM,F_\bbullet\ccM)$ on $X\times\PP^1_t$ such that $p_+(\ccM,F_\bbullet\ccM)=\cH^0p_+(\ccM,F_\bbullet\ccM)=(\ccN,F_\bbullet\ccN)$. If $f$ extends as a morphism $X\to\PP^1$, then $(\ccM,F_\bbullet\ccM)=i_{f,+}(\ccN,F_\bbullet\ccN)$.
\end{remarque}

\subsection{Exponential twist of holonomic \texorpdfstring{$\cD$}{D}-modules}\label{subsec:exptwisthol}
The differential $\rd f$ of the function $f:U\to\Afu_t$ extends as a meromorphic $1$-form on $X$ with poles along $P_\red$. We denote by~$\ccE^f$ the free $\cO_X(*P_\red)$-module of rank one equipped with the connection $\rd+\rd f$. For $\ccN$ as in \S\ref{subsec:graph} (in particular, $\ccN=\ccN(*P_\red)$), we consider the holonomic $\cD_X$-module $\ccN\otimes_{\cO_X}\ccE^f$.

\begin{lemme}\label{lem:if+}
For $\ccM=i_{f,\oplus}\ccN$, we have $\ccM\otimes\ccE^t\simeq i_{f,\oplus}(\ccN\otimes\ccE^f)$.
\end{lemme}

This implies $\ccN\otimes\ccE^f\simeq p_+(\ccM\otimes\ccE^t)=\cH^0p_+(\ccM\otimes\ccE^t)$.

\begin{proof}
Assume first that $f$ extends as a map $X\to\PP^1_t$. We will work in the chart centered at $\infty$ in $\PP^1$, with coordinate $t'$, and we will set $g=(t'\circ f)^{-1}$, so that $f^{-1}(\infty)=g^{-1}(0)$. We denote by $\reg$ the generator of $\ccE^{1/g}$. We have
\[
i_{g,+}(\ccN\otimes\ccE^{1/g})=\bigoplus_k(\ccN\otimes\ccE^{1/g})\otimes\partial_{t'}^k\delta(t'-g)
\]
with its standard $\cD_{X\times\Afu_{t'}}$-module structure. There exists thus a unique $\cO_X[\partial_{t'}]$-linear isomorphism $i_{f,+} (\ccN \otimes\ccE^{1/g})\isom\ccM \otimes\ccE^{1/t'}$ induced by
\[
(n\otimes\reg)\otimes\delta(t'-g)\mto(n\otimes\delta(t'-g))\otimes\retp.
\]
In other words, for each $k$,
\[
(n\otimes\reg)\otimes\partial_{t'}^k\delta(t'-g)\mto\partial_{t'}^k\big[(n\otimes\delta(t'-g))\otimes\retp\big].
\]
By using the same argument as in the proof of \cite[(1.6.5)]{E-S-Y13}, one shows that this isomorphism is $\cD_{X\times\PP^1}$-linear.

Let us now consider the general case. By definition,
\[
i_{f,\oplus}(\ccN \otimes\ccE^f)=(\pi\times\id)_+i_{f',+}\big[\pi^+(\ccN\otimes\ccE^f)(*D')\big].
\]
One then checks that
\[
\pi^+(\ccN\otimes\ccE^f)(*D')=(\pi^+\ccN)(*D')\otimes\ccE^{f'}=\ccN'\otimes\ccE^{f'},
\]
so $i_{f',+}\big[\pi^+(\ccN\otimes\ccE^f)(*D')\big]=\ccM'\otimes\nobreak\ccE^t$ by the argument above. Then, because $\ccE^t=(\pi\times\id)^+\ccE^t$, we have $(\pi\times\nobreak\id)_+(\ccM'\otimes\nobreak\ccE^t)=\ccM\otimes\ccE^t$.
\end{proof}

\Subsection{Exponentially regular holonomic \texorpdfstring{$\cD$}{D}-modules}

\begin{lemme}\label{lem:pasdeHk}
Assume that $\ccM$ is any regular holonomic $\cD_{X\times\PP^1}$-module. Then the push-forward $p_+(\ccM\otimes\ccE^t)$ has holonomic cohomology and satisfies $\cH^kp_+(\ccM\otimes\ccE^t)=0$ for $k\neq0$.
\end{lemme}

\begin{proof}
The first statement follows from the holonomicity of $\ccM\otimes\ccE^t$. We can assume that $\ccM=\ccM(*\infty)$. Let us set $M=p_*\ccM$. Then $M$ is a regular holonomic $\cD_X[t]\langle\partial_t\rangle$-module, and $p_+(\ccM\otimes\ccE^t)$ is the complex
\[
0\to M\To{\partial_t+1}\underset\bbullet M\to0,
\]
where the $\bbullet$ indicates the term in degree zero. Set $K=\cH^{-1}p_+(\ccM\otimes\ccE^t)=\ker(\partial_t+\nobreak1)$. It is $\cD_X$-holonomic, and the $\cD_X$-linear inclusion $K\hto M$ extends as a natural $\cD_X[t]\langle\partial_t\rangle$-linear morphism $K[t]\otimes E^{-t}\to M$. It is clear that $K[t]\otimes E^{-t}$ is purely irregular along $t=\infty$ (this is easily seen on the generic part of the support of $K$) hence, since $M$ is regular, this image is zero, so $K=0$.
\end{proof}

\begin{definition}
We say that a holonomic $\cD_X$-module $\ccN_{\exp}$ is \emph{exponentially regular} if there exists a regular holonomic $\cD_{X\times\PP^1}$-module $\ccM$ such that $\ccN_{\exp}\simeq \cH^0p_+(\ccM\otimes\ccE^t)$.
\end{definition}

\begin{proposition}\label{prop:Nexp}\mbox{}
\begin{enumerate}
\item\label{prop:Nexp1}
If $f$ is meromorphic on $X$ and holomorphic on $U=X\setminus D$, and if $\ccN=\ccN(*D)$ is a regular holonomic $\cD_X$-module, then $\ccN\otimes\ccE^f$ is exponentially regular.
\item\label{prop:Nexp2}
Let $\pi:X\to Y$ be a proper morphism and let $\ccN_{\exp}$ be exponentially regular on~$X$. Then for each $j$, $\cH^j\pi_+\ccN_{\exp}$ is exponentially regular on $Y$.
\end{enumerate}
\end{proposition}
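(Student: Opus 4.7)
The plan is to reduce both parts to the graph construction of \S\ref{subsec:graph} combined with Lemmas \ref{lem:MNreg}, \ref{lem:if+}, and \ref{lem:pasdeHk}.

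For \eqref{prop:Nexp1}, I will apply the graph construction with $P_\red$ taken to be the pole divisor of $f$. Since $P_\red\subset D$, the hypothesis $\ccN=\ccN(*D)$ gives automatically $\ccN=\ccN(*P_\red)$, so the construction is legitimate and produces $\ccM:=i_{f,\oplus}\ccN$ on $X\times\PP^1$, which is regular holonomic by Lemma \ref{lem:MNreg}. Lemma \ref{lem:if+} then identifies $\ccN\otimes\ccE^f$ with $p_+(\ccM\otimes\ccE^t)$, and Lemma \ref{lem:pasdeHk} collapses this push-forward onto its $\cH^0$, giving exactly the definition of exponential regularity.

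For \eqref{prop:Nexp2}, start from $\ccN_{\exp}\simeq\cH^0p_+(\ccM\otimes\ccE^t)$ with $\ccM$ regular holonomic on $X\times\PP^1$; Lemma \ref{lem:pasdeHk} promotes this to an isomorphism of complexes $\ccN_{\exp}\simeq p_+(\ccM\otimes\ccE^t)$. Denote by $\pi':=\pi\times\id$ the proper morphism $X\times\PP^1\to Y\times\PP^1$, and by $p_X$, $p_Y$ the two projections to $X$ and $Y$. Composition of proper push-forwards $\pi\circ p_X=p_Y\circ\pi'$, combined with the $\cD$-module projection formula, will yield
\[
\pi_+\ccN_{\exp}\simeq p_{Y,+}\bigl(\pi'_+\ccM\otimes\ccE^t\bigr),
\]
the projection formula being applicable because $\ccE^t$ on $X\times\PP^1$ is literally the pull-back via $\pi'$ of $\ccE^t$ on $Y\times\PP^1$ and is $\cO$-flat. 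Setting $\ccM^j:=\cH^j\pi'_+\ccM$, each of which is regular holonomic on $Y\times\PP^1$ (as $\pi'$ is proper), Lemma \ref{lem:pasdeHk} applied to $\ccM^j$ shows that $p_{Y,+}(\ccM^j\otimes\ccE^t)$ is concentrated in degree zero. The hypercohomology spectral sequence
\[
E_2^{a,b}=\cH^ap_{Y,+}(\ccM^b\otimes\ccE^t)\Longrightarrow\cH^{a+b}p_{Y,+}(\pi'_+\ccM\otimes\ccE^t)
\]
then degenerates at $E_2$ and yields $\cH^j\pi_+\ccN_{\exp}\simeq\cH^0p_{Y,+}(\ccM^j\otimes\ccE^t)$, which is exponentially regular on $Y$ by definition.

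The main delicate step will be invoking the $\cD$-module projection formula to pull $\ccE^t$ out of $\pi'_+$; after that, the collapse of the spectral sequence is an immediate consequence of Lemma \ref{lem:pasdeHk}. The composition $\pi_+p_{X,+}\simeq p_{Y,+}\pi'_+$ is routine, and everything else merely assembles the pieces.
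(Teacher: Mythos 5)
Your proposal is correct and follows essentially the same route as the paper: part \eqref{prop:Nexp1} is exactly the combination of Lemmas \ref{lem:MNreg} and \ref{lem:if+}, and for part \eqref{prop:Nexp2} the paper likewise composes $\pi_+p_{X,+}=p_{Y,+}(\pi\times\id)_+$, applies the projection formula $(\pi\times\id)_+(\ccM\otimes\ccE^t)=((\pi\times\id)_+\ccM)\otimes\ccE^t$, and uses Lemma \ref{lem:pasdeHk} to kill the off-degree terms so that $\cH^j$ commutes with $p_{Y,+}$ — which is precisely your $E_2$-degeneration. Your write-up is merely a little more explicit about why the projection formula applies and about the spectral sequence mechanics.
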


\begin{proof}
The first point follows from Lemma \ref{lem:MNreg} and Lemma \ref{lem:if+}. For the second point, set $\ccN_{\exp}=\cH^0p_+(\ccM\otimes\ccE^t)$ with $\ccM$ regular on $X\times\PP^1$. We have, according to Lemma~\ref{lem:pasdeHk},
\begin{align*}
\cH^j\pi_+\ccN_{\exp}=\cH^j\pi_+(\cH^0p_{X,+}(\ccM\otimes\ccE^t))&=\cH^j(\pi_+p_{X,+}(\ccM\otimes\ccE^t))\\
&=\cH^j(p_{Y,+}(\pi\times\id)_+(\ccM\otimes\ccE^t)).
\end{align*}
Now, $(\pi\times\id)_+(\ccM\otimes\ccE^t)=(\pi_+\ccM)\otimes\ccE^t$, with $\pi_+\ccM$ having regular holonomic cohomology. We thus have $\cH^kp_{Y,+}\cH^j(\pi\times\id)_+(\ccM\otimes\ccE^t)=0$ for $k\neq0$ according to Lemma~\ref{lem:pasdeHk}, hence
\begin{align*}
\cH^j\big(p_{Y,+}(\pi\times\id)_+(\ccM\otimes\ccE^t)\big)&=\cH^0p_{Y,+}\cH^j(\pi\times\id)_+(\ccM\otimes\ccE^t)\\
&=\cH^0p_{Y,+}\big((\cH^j\pi_+\ccM)\otimes\ccE^t\big).\qedhere
\end{align*}
\end{proof}

\section{The mixed twistor \texorpdfstring{$\cD$}{D}-module attached to \texorpdfstring{$\ccE^f$}{E}}\label{sec:MTEf}

If $f$ is a rational function on $X$ with pole divisor $P$, the twist of a holonomic $\cD_X$\nobreakdash-module by $\ccE^f$ consists first in localizing this module along $P_\red$ and then in adding~$\rd f$ to its connection. The main property used is that the localization functor on holonomic $\cD_X$-modules preserves coherence (hence holonomy).

For a filtered holonomic $\cD_X$-module, the stupid localization functor $(*P_\red)$, which consists in localizing both the module and its filtration, does not preserve coherence since the localization of a coherent $\cO_X$-module does not remain $\cO_X$-coherent. In the theory of mixed Hodge modules, there is a localization functor which extends the one at the level of regular holonomic $\cD$-modules. We will now consider the case of $\cR_\cX$-modules and mixed twistor $\cD$-modules, in order to treat the Laplace transform of mixed Hodge modules.

We keep the analytic setting of \S\ref{subsec:graph}. Recall the following notation used in the theory of twistor $\cD$-modules (\cf \cite{Bibi01c,Mochizuki07,Mochizuki08}). For a complex manifold~$X$, we denote by $\cX$ the product $X\times\CC_\hb$ of $X$ with the complex line having coordinate~$\hb$. The ring $\cR_\cX$ is the analytification of the Rees ring $R_F\cD_X:=\bigoplus_{k\in\NN}F_k\cD_X\hb^k$ attached to the ring of differential operators equipped with its standard filtration by the order. It is locally expressed as $\cO_{\cX}\langle\partiall_{x_1},\dots,\partiall_{x_n}\rangle$, where $\partiall_{x_i}:=\hb\partial_{x_i}$.

\subsubsection*{The smooth case}
We denote by $\cE_U^{f/\hb}$ the $\cR_{\cU}$-module $\cO_\cU$ equipped with the $\hb$\nobreakdash-connection $\hb\rd+\rd f$. By using the same argument as in \cite[\S2.2]{Bibi04}, one checks that $\cE_U^{f/\hb}$ underlies a smooth twistor $\cD$-module; equivalently, it corresponds to a harmonic metric on the flat bundle $(\cO_U,\rd+\rd f)$. It follows that $\cE_U^{f/\hb}$ underlies a polarized variation of smooth twistor structure of weight~$0$, equivalently a pure polarized smooth twistor $\cD$-module.

\subsubsection*{The stupid localization}
Similarly, writing for short $\cO_\cX(*P_\red):=\cO_\cX\big({*}(P_\red\times\CC_\hb)\big)$, we consider $\cO_\cX(*P_\red)\cdot\mathrm{e}^{f/\hb}:=(\cO_\cX(*P_\red),\hb\rd+\rd f)$, where we denote the global section~$1$ of $\cO_\cX(*P_\red)$ by~$\mathrm{e}^{f/\hb}$. This is a coherent $\cR_\cX(*P_\red)$-module (however, it~is not necessarily $\cR_\cX$-coherent). Note also that there is a natural action of $\hb^2\partial_\hb$, by setting $\hb^2\partial_\hb(\mathrm{e}^{f/\hb}):=-f\cdot\mathrm{e}^{f/\hb}$ in $\cO_\cX(*P_\red)$. This action commutes with that of the $\hb$-connection. We say that $(\cO_\cX(*P_\red),\hb\rd+\rd f)$ is \emph{integrable} (\cf \cite[Chap.\,7]{Bibi01c}).

\begin{lemme}\label{lem:Efzcoh}
Assume that $f:U\to\Afu$ extends as a holomorphic map $f:X\to\PP^1$. Then $\cO_\cX(*P_\red)\cdot\mathrm{e}^{f/\hb}$ is $\cR_\cX$-coherent.
\end{lemme}

\begin{proof}
The question is local near a point of $P_\red$ and, up to shrinking $X$, we may assume that $f=1/g$ for some holomorphic function $g:X\to\CC$. Then $\cO_\cX(*P_\red)\cdot\nobreak\mathrm{e}^{f/\hb}=\cO_\cX(*\{g=0\})\mathrm{e}^{1/g\hb}$. If $P_\red$ has normal crossings, we choose local coordinates such that $g=x^\bme$, and the relation $x^\bme\partiall_{x_i}\mathrm{e}^{1/x^{\bme}\hb}=(-e_i/x_i)\mathrm{e}^{1/x^{\bme}\hb}$ gives the coherence. If $P_\red$ is arbitrary, let $\pi:X'\to X$ be a projective modification over a neighbourhood of the point of $P_\red$ we consider, such that $\pi^{-1}(P_\red)$ has normal crossings. Set $g'=g\circ\pi$. Then
\[
\pi_+\big(\cO_{\cX'}(*\{g'=0\})\mathrm{e}^{1/g'\hb}\big)=\cH^0\pi_+\big(\cO_{\cX'}(*\{g'=0\})\mathrm{e}^{1/g'\hb}\big)=\cO_\cX(*\{g=0\})\mathrm{e}^{1/g\hb},
\]
since it can be seen that $g$ is invertible on $\cH^0\pi_+\big(\cO_{\cX'}(*\{g'=0\})\mathrm{e}^{1/g'\hb}\big)$. By the properness of $\pi$, $\cO_\cX(*\{g=0\})\mathrm{e}^{1/g\hb}$ is then $\cR_\cX$-coherent.
\end{proof}

\begin{proposition}\label{prop:Epurewild}
If $g:X\to\CC$ is holomorphic, $\cO_\cX(*\{g=0\})\mathrm{e}^{1/g\hb}$ underlies a pure wild twistor $\cD$-module of weight zero.
\end{proposition}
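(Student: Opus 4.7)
The plan is to reduce to the normal-crossing case via a resolution of $\{g=0\}$, invoke Mochizuki's extension theorem for wild harmonic bundles in that model, and then transport the result back to $X$ by a projective push-forward. The overall structure follows the geometric pattern already used in the proof of Lemma~\ref{lem:Efzcoh}.

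First, I would choose a projective modification $\pi:X'\to X$, isomorphism over $U=X\setminus\{g=0\}$, such that $D':=(\pi^{-1}(\{g=0\}))_\red$ is a normal crossing divisor; setting $g'=g\circ\pi$, I may locally write $g'=x^{\bme}$ up to an invertible holomorphic factor. In these local coordinates, $1/g'$ is an \emph{unramified good} irregular part along $D'$ in the sense of Kedlaya--Mochizuki, so the rank-one $\cR_{\cX'}$-module $\cO_{\cX'}(*D')\cdot\mathrm{e}^{1/g'\hb}$ is a good meromorphic flat bundle with irregular part $1/g'$ along each component of $D'$.

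Second, I would treat the normal-crossing case on $X'$. On $X'\setminus D'$, the restriction of $\cO_{\cX'}(*D')\cdot\mathrm{e}^{1/g'\hb}$ is the smooth $\cR$-module $\cE_{X'\setminus D'}^{(1/g')/\hb}$, which underlies a polarized variation of pure smooth twistor structure of weight~$0$ (tautologically, as explained in the smooth-case paragraph preceding Lemma~\ref{lem:Efzcoh}). By Mochizuki's existence theorem for good wild harmonic metrics and the associated prolongation result (\cf\cite{Mochizuki11}), the canonical meromorphic prolongation $\cO_{\cX'}(*D')\cdot\mathrm{e}^{1/g'\hb}$ underlies a pure wild twistor $\cD$-module of weight~$0$ on~$X'$.

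Third, I would push forward by $\pi$. By Mochizuki's decomposition theorem for pure (wild) twistor $\cD$-modules under projective morphisms (\cf\cite{Mochizuki08,Mochizuki11}), $\pi_+\big(\cO_{\cX'}(*D')\cdot\mathrm{e}^{1/g'\hb}\big)$ splits in the derived category as $\bigoplus_j\cH^j\pi_+(\cdots)[-j]$, with each $\cH^j$ underlying a pure wild twistor $\cD$-module of weight~$j$ on~$X$. The argument given in the proof of Lemma~\ref{lem:Efzcoh} shows that this push-forward is concentrated in degree~$0$ and equals $\cO_\cX(*\{g=0\})\cdot\mathrm{e}^{1/g\hb}$, which therefore underlies a pure wild twistor $\cD$-module of weight~$0$.

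The main obstacle is verifying that the technical input of Mochizuki's extension theorem applies, i.e., that $1/g'$ is a good (unramified) irregular datum along the normal-crossing divisor $D'$; since $g'$ is locally a monomial in the coordinates cutting out~$D'$, this is automatic, so the proof essentially reduces to a bookkeeping invocation of Mochizuki's machinery combined with the geometric set-up used for Lemma~\ref{lem:Efzcoh}.
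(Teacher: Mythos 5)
Your overall strategy --- resolve $\{g=0\}$ to a normal crossing divisor, treat the normal-crossing case with Mochizuki's theory of wild harmonic bundles, and descend by $\cH^0\pi_+$ using the computation already made for Lemma~\ref{lem:Efzcoh} --- is exactly the one the paper follows. The gap is in your second step. You write that ``the canonical meromorphic prolongation $\cO_{\cX'}(*D')\cdot\mathrm{e}^{1/g'\hb}$ underlies a pure wild twistor $\cD$-module'', but this phrasing assumes the very point that has to be proved: that the $\cR_{\cX'}$-module produced by Mochizuki's prolongation procedure from a harmonic bundle \emph{coincides with} the stupidly localized module $\cO_{\cX'}(*D')\cdot\mathrm{e}^{1/g'\hb}$ appearing in the statement. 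Mochizuki's machinery, applied blindly, only tells you that \emph{some} coherent $\cR_{\cX'}$-module extending $\cE^{1/g'\hb}_{X'\setminus D'}$ underlies a pure wild twistor $\cD$-module of weight zero; it does not by itself identify that extension with the naive one. Likewise, invoking an ``existence theorem for good wild harmonic metrics'' is both unnecessary and slightly off target: an abstract existence statement would produce an unidentified metric (and would require a stability/parabolic-degree check), whereas here the harmonic metric is explicit --- the trivial metric $h(1,1)=1$ on the trivial bundle over $U=\{g\neq0\}$ with Higgs field $\theta=\rd(1/g)$, which is a non-ramified good wild harmonic bundle precisely because $g$ is locally a monomial.

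The substance of the paper's proof is the chain of identifications you skip: for fixed $\hb$, the extension $\ccP E^\hb$ of $(\cC^\infty_U,\ov\partial+\hb\rd(1/\ov g))$ is computed to be $\cO_X\cdot\exp(\ov\hb/g-\hb/\ov g)$, isomorphic to $\ccE^{(1+|\hb|^2)/g\hb}$; since there is no Stokes phenomenon in rank one, the construction $\ccQ E^\hb$ just divides the irregular value by $1+|\hb|^2$, giving $\ccQ E^\hb\simeq\ccE^{1/g\hb}$; and this is recognized as the canonical prolongation $\mathfrak{E}$ to which Mochizuki's Prop.\,19.2.1 applies, so that the module underlying the resulting pure wild twistor $\cD$-module is indeed $\cO_\cX(*\{g=0\})\mathrm{e}^{1/g\hb}$ and not some other lattice inside it. Your proposal would be complete once this identification is carried out; as written, the ``bookkeeping invocation'' hides the only step that actually requires an argument.
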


As a consequence, the same property holds for $\cO_\cX(*P_\red)\cdot\mathrm{e}^{f/\hb}$ if $f:U\to\Afu$ extends as $f:X\to\PP^1$.

\begin{proof}[Proof of Proposition \ref{prop:Epurewild}]
This is essentially obvious from the theory of T.\,Mochizuki \cite{Mochizuki08}, but we will make the argument precise. Firstly, one can reduce to the case where $g=0$ has normal crossings, since pure wild twistor $\cD$-module of weight zero are stable by $\cH^0\pi_+$, if $\pi$ is a projective morphism. Here we take $\pi$ as in the proof of Lemma \ref{lem:Efzcoh}.

Set now $U=\{g\neq0\}\subset X$. Let $(\cC^\infty_U,\ov\partial,h)$ be the trivial bundle with its standard holomorphic structure, equipped with its standard metric for which $h(1,1)=1$. Consider it as a harmonic Higgs bundle on $U$ with holomorphic Higgs field $\theta=\rd(1/g)$. Since $g$ is a monomial (in local coordinates), this produces a non-ramified good wild harmonic bundle on $X$, in the sense of \cite[Def.\,7.1.7]{Mochizuki08}.

For a fixed $\hb$ (denoted by $\lambda$ in \loccit), denote by $E^\hb$ the holomorphic bundle $(\cC^\infty_U,\ov\partial+\hb\rd(1/\ov g))$. The extension $\ccP E^\hb$ defined in \cite[Not.\,7.4.1]{Mochizuki08} is nothing but $\cO_X\cdot\exp(\ov\hb/g-\hb/\ov g)$. Together with its natural connection, it is isomorphic to $\ccE^{(1+|\hb|^2)/g\hb}$ (\cf Example 7.4.1.2 in \loccit). Since there is no Stokes phenomenon in rank one, the construction $\ccQ E^\hb$ of \S11.1 in \loccit consists only in dividing the irregular value by $1+|\hb|^2$, so $\ccQ E^\hb\simeq\ccE^{1/g\hb}$ (first point of Th.\,11.1.2 in \loccit). Now, $\ccE^{1/g\hb}$ is the canonical prolongation of $(\cC^\infty_U,\ov\partial,\rd(1/g),h)$ as a coherent $\cR_\cX$-module. It is also equal to the $\cR_\cX$-module denoted by $\mathfrak{E}$ in \loccit\ (see \S12.3.2). Then one concludes by using Prop.\,19.2.1 of \loccit
\end{proof}

\subsubsection*{The twistor localization}
Let $H$ be a divisor in~$X$, locally defined by a holomorphic function $h$ and let $\cN$ be a coherent $\cR_\cX(*H)$-module. According to \cite[Def.\,3.3.1]{Mochizuki11}, one says that $\cN$ is twistor-specializable along $H$ if there exists a coherent $\cR_\cX$-submodule $\cN[*H]\subset\cN$ such that, considering locally the graph inclusion $i_h:X\hto Y:=X\times\CC$ with the coordinate $t$ on $\CC$,
\begin{itemize}
\item
the coherent $\cR_\cY(*\{t=0\})$-module $i_{h,+}\cN$ is strictly specializable along $t=0$, in the sense of \cite[\S3.4.a]{Bibi01c},
\item
$i_{h,+}(\cN[*H])$ is equal to the coherent $\cR_\cY$-submodule of $i_{h,+}\cN$ generated by the~$V^t_1$ term of the $V$-filtration (with the convention taken in this article), denoted by $(i_{h,+}\cN)[*t]$.
\end{itemize}
If $\cN[*H]$ exists locally, it is unique, hence exists globally. The category $\MTM^\mathrm{int}$ is introduced in \S7.2 of \cite{Mochizuki11}, and the results of \loccit imply the following.

\begin{proposition}\label{prop:EXf}
Let $f$ be any meromorphic function on $X$ with pole divisor $P$.
\begin{enumerate}
\item\label{prop:EXf1}
The coherent $\cR_\cX(*P_\red)$-module $\cO_\cX(*P_\red)\cdot\mathrm{e}^{f/\hb}$ is twistor-specializable along~$P_\red$ and defines $\cO_\cX(*P_\red)\cdot\mathrm{e}^{f/\hb}[*P_\red]=:\cE_X^{f/\hb}$.
\item\label{prop:EXf3}
Moreover, $\cE_X^{f/\hb}$ underlies an object of $\MTM^\mathrm{int}(X)$ extending the object of $\MTM^\mathrm{int}(U)$ that~$\cE_U^{f/\hb}$ underlies.
\item\label{prop:EXf4}
If $f$ extends as a morphism $f:X\to\PP^1$, then $\cE_X^{f/\hb}=\cO_\cX(*P_\red)\cdot\mathrm{e}^{f/\hb}$ and the object of $\MTM^\mathrm{int}(X)$ it underlies is pure of weight zero.
\item\label{prop:EXf5}
Let $H$ be any divisor in $X$. Then $\cE_X^{f/\hb}(*H)$ is twistor-specializable along $H$ and the corresponding object $\cE_X^{f/\hb}[*H]$ underlies an object of $\MTM^\mathrm{int}(X)$.
\end{enumerate}
\end{proposition}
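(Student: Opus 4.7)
The plan is to handle \eqref{prop:EXf4} first as the base case, deduce \eqref{prop:EXf1} and \eqref{prop:EXf3} from it by a graph modification together with Mochizuki's projective push-forward theorem for $\MTM^{\mathrm{int}}$, and obtain \eqref{prop:EXf5} by applying Mochizuki's localization functor.

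For \eqref{prop:EXf4}, assume $f$ extends as a morphism $X\to\PP^1$. Lemma~\ref{lem:Efzcoh} yields the $\cR_\cX$-coherence of $\cO_\cX(*P_\red)\cdot\mathrm{e}^{f/\hb}$. Locally near a point of $P_\red$ we write $f=1/g$ for a holomorphic function $g$ defining $P_\red$, so that Proposition~\ref{prop:Epurewild} identifies this module with a pure wild twistor $\cD$-module of weight zero. Integrability is supplied by the action $\hb^2\partial_\hb\cdot\mathrm{e}^{f/\hb}=-f\cdot\mathrm{e}^{f/\hb}$ noted before Lemma~\ref{lem:Efzcoh}. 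Pure wild twistor $\cD$-modules are twistor-specializable along any divisor, and along a divisor where the underlying $\cR_\cX$-module is already localized the extension $[*P_\red]$ agrees with the module itself; hence $\cE_X^{f/\hb}=\cO_\cX(*P_\red)\cdot\mathrm{e}^{f/\hb}$ in $\MTM^{\mathrm{int}}(X)$, pure of weight zero.

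For \eqref{prop:EXf1} and \eqref{prop:EXf3}, pick a proper modification $\pi:X'\to X$ as in diagram~\eqref{eq:diagram}, so that $f'=f\circ\pi|_{\pi^{-1}(U)}$ extends as a holomorphic map $X'\to\PP^1$. By \eqref{prop:EXf4}, $\cE_{X'}^{f'/\hb}$ underlies a pure object of weight zero in $\MTM^{\mathrm{int}}(X')$. Mochizuki's projective push-forward theorem for $\MTM^{\mathrm{int}}$ in \cite{Mochizuki11} guarantees that each $\cH^j\pi_+\cE_{X'}^{f'/\hb}$ lies in $\MTM^{\mathrm{int}}(X)$; define the candidate $\cE_X^{f/\hb}:=\cH^0\pi_+\cE_{X'}^{f'/\hb}$. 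Since $\pi$ is an isomorphism over $U$, its restriction to $\cU$ is canonically $\cE_U^{f/\hb}$, giving the extension property of \eqref{prop:EXf3}. To match the definition in \eqref{prop:EXf1}, the adjunction map embeds $\cH^0\pi_+\cE_{X'}^{f'/\hb}$ as a coherent $\cR_\cX$-submodule of $\cO_\cX(*P_\red)\cdot\mathrm{e}^{f/\hb}$ which is localized along $P_\red$ and which, underlying an MTM, is strictly specializable along $P_\red$ in the sense of \cite[\S3.4.a]{Bibi01c}. Uniqueness of the twistor localization then identifies it with $\cO_\cX(*P_\red)\cdot\mathrm{e}^{f/\hb}[*P_\red]$, proving simultaneously the existence of $\cE_X^{f/\hb}$ in \eqref{prop:EXf1} and its $\MTM^{\mathrm{int}}$ structure in \eqref{prop:EXf3}.

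Finally, for \eqref{prop:EXf5}, apply Mochizuki's localization functor $[*H]$, a basic operation on $\MTM^{\mathrm{int}}(X)$ in \cite{Mochizuki11}, to $\cE_X^{f/\hb}$; this yields at once the twistor-specializability of $\cE_X^{f/\hb}(*H)$ along $H$ and the $\MTM^{\mathrm{int}}$ structure on $\cE_X^{f/\hb}[*H]$. The main obstacle is the identification step in the previous paragraph: to conclude that $\cH^0\pi_+\cE_{X'}^{f'/\hb}$ coincides with the twistor localization of the naively defined $\cO_\cX(*P_\red)\cdot\mathrm{e}^{f/\hb}$, one must verify strict specializability of the push-forward on the nose and check that its $V$-filtration along $P_\red$ matches the characterizing $V^t_1$-condition of \cite[Def.~3.3.1]{Mochizuki11} through the auxiliary graph embedding $i_h$.
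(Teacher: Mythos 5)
Your overall strategy coincides with the paper's: prove \eqref{prop:EXf4} first, deduce \eqref{prop:EXf1}--\eqref{prop:EXf3} by resolving $f$ through $\pi:X'\to X$ and pushing forward, and get \eqref{prop:EXf5} from Mochizuki's localization functor. However, two steps contain genuine gaps.

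First, in \eqref{prop:EXf4} you assert that for a module already localized along $P_\red$ the twistor localization $[*P_\red]$ coincides with the module itself. This is not a general fact: by definition $\cN[*P_\red]$ is the $\cR_\cX$-submodule of $\cN$ generated by the $V^{t'}_1$ term of $i_{g,+}\cN$, and for a localized module this submodule is in general proper (already $\cO_\cX[*H]\subsetneq\cO_\cX(*H)$ in the regular case). What makes the equality true here is the exponential factor: the paper shows, via \cite[Prop.\,2.2.5]{Bibi06b}, that the $V^{t'}$-filtration of $(i_{g,+}\cO_\cX(*P_\red))\otimes\mathrm{e}^{1/t'\hb}$ is \emph{constant}, so that $V^{t'}_1$ is everything. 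Without this computation your claim is unsupported.

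Second, in the reduction for \eqref{prop:EXf1} and \eqref{prop:EXf3} you push forward $\cE_{X'}^{f'/\hb}$ itself. But $D':=\pi^{-1}(P_\red)$ may strictly contain the reduced pole divisor $P'_\red$ of $f'$; writing $D'=P'_\red\cup H'$, the module whose push-forward recovers $\cO_\cX(*P_\red)\cdot\mathrm{e}^{f/\hb}$ is the one localized along all of $D'$, namely $\cE_{X'}^{f'/\hb}[*H']=(\cO_{\cX'}(*D')\cdot\mathrm{e}^{f'/\hb})[*D']$, not $\cE_{X'}^{f'/\hb}$. You must therefore first apply \eqref{prop:EXf5}-type localization upstairs (\cite[Prop.\,11.2.1]{Mochizuki11}) before pushing forward. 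Finally, the identification you correctly flag as ``the main obstacle'' --- that the push-forward agrees with the twistor localization $[*P_\red]$ of $\cO_\cX(*P_\red)\cdot\mathrm{e}^{f/\hb}$ downstairs --- is not something you verify; the paper settles it by \cite[Lem.\,3.3.17]{Mochizuki11}, the compatibility of $[*\cdot]$ with projective push-forward for objects of $\MTM$, combined with $\cH^0\pi_+(\cO_{\cX'}(*D')\cdot\mathrm{e}^{f'/\hb})=\cO_\cX(*P_\red)\cdot\mathrm{e}^{f/\hb}$. As written, your argument leaves both the existence assertion of \eqref{prop:EXf1} and the identification in \eqref{prop:EXf3} unproved.
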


\begin{proof}
Let us start by \eqref{prop:EXf4}. Let $g$ be a local equation of $P_\red$. Then
\[
i_{g,+}\big(\cO_\cX(*P_\red)\cdot\mathrm{e}^{1/g\hb}\big)=\big(i_{g,+}\cO_\cX(*P_\red)\big)\otimes\mathrm{e}^{1/t'\hb}
\]
and \cite[Prop.\,2.2.5]{Bibi06b} shows that the $V^{t'}$-filtration is constant. Therefore,
\[
i_{g,+}\big(\cO_\cX(*P_\red)\cdot\mathrm{e}^{1/g\hb}\big)[*t']=i_{g,+}\big(\cO_\cX(*P_\red)\cdot\mathrm{e}^{1/g\hb}\big)
\]
and thus $\cO_\cX(*P_\red)\cdot\mathrm{e}^{f/\hb}[*P_\red]=\cO_\cX(*P_\red)\cdot\mathrm{e}^{f/\hb}$, as wanted. The remaining assertion in \eqref{prop:EXf4} is then given by Proposition \ref{prop:Epurewild}.

Let us prove \eqref{prop:EXf1} and \eqref{prop:EXf3}. If $f$ does not extend as a morphism $X\to\PP^1$, let $\pi:X'\to X$ be as in \eqref{eq:diagram}. Set $D'=P'_\red\cup H'$. Then, according to \cite[Prop.\,11.2.1]{Mochizuki11}, $\cE_{X'}^{f'/\hb}[*H']$ underlies an object of $\MTM^\mathrm{int}(X')$. According to \cite[Prop.\,11.2.6]{Mochizuki11}, its push-forward $\cH^0\pi_+\cE_{X'}^{f'/\hb}[*H']$ underlies an object of $\MTM^\mathrm{int}(X)$. We also have $\cE_{X'}^{f'/\hb}[*H']=(\cO_{\cX'}(*D')\cdot\mathrm{e}^{f'/\hb})[*D']$ and we can apply \cite[Lem.\,3.3.17]{Mochizuki11} (because we work with objects of $\MTM(X')$) to deduce that
\[
\cH^0\pi_+\cE_{X'}^{f'/\hb}[*H']=\cH^0\pi_+(\cO_{\cX'}(*D')\cdot\mathrm{e}^{f'/\hb})[*P_\red].
\]
On the other hand, we have $\cH^0\pi_+(\cO_{\cX'}(*D')\cdot\mathrm{e}^{f'/\hb})=\cO_\cX(*P_\red)\cdot\mathrm{e}^{f/\hb}$. Therefore the latter $\cR_\cX(*P_\red)$-module is twistor-specializable along $P_\red$ and we have $\cE_X^{f/\hb}=\cH^0\pi_+\cE_{X'}^{f'/\hb}[*H']$. This concludes~\eqref{prop:EXf1} and \eqref{prop:EXf3}.

Lastly, \eqref{prop:EXf5} follows from \cite[Prop.\,11.2.1]{Mochizuki11}.
\end{proof}

\subsubsection*{The Laplace twist}
Let $f:U\to\Afu$ be as above and let $\tau$ be a new variable. We now consider the function $\tau f:U\times\Afu_\tau\to\Afu$ as a meromorphic function on~$X\times\Afu_\tau$. Proposition \ref{prop:EXf} implies that $\cE_{X\times\Afu_\tau}^{\tau f/\hb}$ exists and underlies an object of \hbox{$\MTM^\mathrm{int}(X\times\Afu_\tau)$}. In \S\ref{sec:relwithFirr} we will also have to consider another variable $v$ and the object~$\cE_X^{\tau v f/\hb}$ of \hbox{$\MTM^\mathrm{int}(X\times\Afu_v\times\Afu_\tau)$}.

\begin{proposition}\label{prop:Etaufhb}
If $f:U\to\Afu$ extends as a morphism $f:X\to\PP^1$, then $\cE_{X\times\Afu_\tau}^{\tau f/\hb}=\cO_{\cX\times\Afu_\tau}(*P_\red)\cdot\mathrm{e}^{\tau f/\hb}$.
\end{proposition}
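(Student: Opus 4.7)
The plan is to transpose the argument used for Proposition~\ref{prop:EXf}\eqref{prop:EXf4}, treating $\tau$ as a parameter rather than as a coordinate to be specialized. Although $\tau f$ does not itself extend as a morphism $X\times\Afu_\tau\to\PP^1$ (it has indeterminacy along $P_\red\times\{0\}$), the essential singularity carried by $\mathrm{e}^{\tau f/\hb}$ along the generic point of $P_\red\times\Afu_\tau$ is what drives the conclusion, just as in the absolute case.

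The claim is local along $P_\red\times\Afu_\tau$, so I would work in a neighborhood of a point of $P_\red$ and choose a local holomorphic equation~$g$ of $P_\red$; since $f$ extends to $\PP^1$ with pole divisor exactly $P_\red$, we may locally write $f=1/g$ (absorbing an invertible factor). Pulled back trivially in $\tau$, the function~$g$ is also a local equation of $P_\red\times\Afu_\tau$. Using the graph embedding $i_g:X\times\Afu_\tau\hookrightarrow X\times\Afu_\tau\times\Afu_{t'}$, one first verifies the identification
\[
i_{g,+}\bigl(\cO_{\cX\times\Afu_\tau}(*P_\red)\cdot\mathrm{e}^{\tau f/\hb}\bigr)=i_{g,+}\bigl(\cO_{\cX\times\Afu_\tau}(*P_\red)\bigr)\otimes\mathrm{e}^{\tau/t'\hb},
\]
a routine calculation entirely analogous to the one implicit in the proof of Proposition~\ref{prop:EXf}\eqref{prop:EXf4}.

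The core step is then to invoke \cite[Prop.\,2.2.5]{Bibi06b} in its parametric formulation: the essential singularity of $\mathrm{e}^{\tau/t'\hb}$ at $t'=0$ forces the $V^{t'}$-filtration of the right-hand side to be constant, so that taking $[*t']$ acts as the identity. Translating back through $i_g$, this yields
\[
\bigl(\cO_{\cX\times\Afu_\tau}(*P_\red)\cdot\mathrm{e}^{\tau f/\hb}\bigr)[*(P_\red\times\Afu_\tau)]=\cO_{\cX\times\Afu_\tau}(*P_\red)\cdot\mathrm{e}^{\tau f/\hb},
\]
which is exactly the identification of $\cE_{X\times\Afu_\tau}^{\tau f/\hb}$ asserted in the statement.

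The main obstacle is to ensure that the $V^{t'}$-filtration is constant \emph{uniformly in~$\tau$}, including near $\tau=0$, where $\mathrm{e}^{\tau/t'\hb}$ degenerates to~$1$ and the irregularity disappears on the fiber. I expect the parametric statement of \cite[Prop.\,2.2.5]{Bibi06b} to cover this directly, since $\tau$ is a coordinate on the base manifold and $V^{t'}$-specializability is a property of the $\cR$-module as a whole. Should a subtlety arise at $\tau=0$, one may instead pass to a blow-up $\widetilde\pi:\widetilde{X\times\Afu_\tau}\to X\times\Afu_\tau$ resolving the indeterminacy of $\tau f$ along $P_\red\times\{0\}$: on the blow-up the pulled-back function extends to a morphism to $\PP^1$, so Proposition~\ref{prop:EXf}\eqref{prop:EXf4} applies there, and one descends by $\cH^0\widetilde\pi_+$ together with \cite[Lem.\,3.3.17]{Mochizuki11}, exactly in the spirit of the proof of Proposition~\ref{prop:EXf}\eqref{prop:EXf1}.
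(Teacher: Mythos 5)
There is a genuine gap at the one point you yourself flag as ``the main obstacle.'' The reference \cite[Prop.\,2.2.5]{Bibi06b} is invoked in the proof of Proposition \ref{prop:EXf}\eqref{prop:EXf4} precisely because the exponent $1/t'$ has an actual pole everywhere along $t'=0$; for the exponent $\tau/t'$ this hypothesis fails along $\{t'=0,\tau=0\}$, where the twist degenerates to the trivial one, so a ``parametric formulation'' of that proposition does not apply there and cannot be expected to. The paper closes exactly this hole by an explicit one-line computation: on the graph module one has $\partiall_\tau\big(\delta(t'-g)\otimes\mathrm{e}^{\tau/t'\hb}\big)=(1/t')\,\delta(t'-g)\otimes\mathrm{e}^{\tau/t'\hb}$, hence $\delta(t'-g)\otimes\mathrm{e}^{\tau/t'\hb}=t'\partiall_\tau\,\delta(t'-g)\otimes\mathrm{e}^{\tau/t'\hb}$. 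Since $\partiall_\tau$ lies in $V_0^{t'}$ of the ring, this shows the module equals $t'$ times itself over $V_0^{t'}$, forcing the $V^{t'}$-filtration to be constant uniformly in $\tau$, including at $\tau=0$. Your proof needs this (or an equivalent) identity; without it the key step is only asserted.

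Your fallback via a blow-up resolving the indeterminacy of $\tau f$ along $P_\red\times\{0\}$ does not repair this. That argument reproduces the proof of Proposition \ref{prop:EXf}\eqref{prop:EXf1}: it establishes that $\cO_{\cX\times\Afu_\tau}(*P_\red)\cdot\mathrm{e}^{\tau f/\hb}$ is twistor-specializable and identifies $\cE_{X\times\Afu_\tau}^{\tau f/\hb}$ as $\cH^0\widetilde\pi_+$ of a twistor-localized module upstairs, but it does not show that the resulting submodule is the \emph{whole} of the stupidly localized module, which is the entire content of Proposition \ref{prop:Etaufhb}. Indeed, upstairs the pole divisor of the extended morphism is strictly smaller than the preimage of $P_\red\times\Afu_\tau$ (the exceptional components over $P_\red\times\{0\}$ carry no pole), so one must apply the twistor localization $[*H']$ along those components, and $[*H']$ is in general a proper submodule of the stupid localization; after push-forward one is back to needing the constancy of the $V$-filtration at $\tau=0$. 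So the blow-up route begs the question rather than answering it.
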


\begin{proof}
The question is local near $P_\red$ and, using the notation as above, we have to prove that $\cE_{X\times\Afu_\tau}^{\tau /g\hb}=\cO_{\cX\times\Afu_\tau}(*P_\red)\cdot\mathrm{e}^{\tau /g\hb}$. Equivalently, we should prove that the $V^{t'}$-filtration of $(i_{g,+}\cO_{\cX\times\Afu_\tau})(*\{t'=0\})\cdot\mathrm{e}^{\tau /t'\hb}$ is constant. This is obtained through the equation $\delta(t'-g)\otimes\mathrm{e}^{\tau /t'\hb}=t'\partiall_\tau\delta(t'-g)\otimes\mathrm{e}^{\tau /t'\hb}$.
\end{proof}

\section{Strictness for exponentially twisted regular holonomic \texorpdfstring{$\cD$}{D}-modules}
We will first prove a particular case of Theorem \ref{th:1main}. Let $p:X\times\PP^1\to X$ denote the projection and let $t$ be the coordinate on the affine line $\Afu=\PP^1\setminus\{\infty\}$. Recall that, for $(\ccM,F_\bbullet\ccM)$ underlying a mixed Hodge module on $X\times\PP^1$, we have constructed in \cite[\S3.1]{E-S-Y13} a filtration $F_\bbullet^\Del(\ccM\otimes\ccE^t)$ indexed by $\QQ$ (\cf Definition \ref{def:Rees} for the corresponding Rees construction).

\begin{theoreme}\label{th:Firr}
For $(\ccM,F_\bbullet\ccM)$ underlying a mixed Hodge module, the complex $p_+R_{F_\bbullet^\Del}(\ccM\otimes\ccE^t)$ is strict and has nonzero cohomology in degree zero at most.
\end{theoreme}

In the case where $X$ is a point, this is the statement of \cite[Th.\,6.1]{Bibi08}. If $(\ccM,F_\bbullet\ccM)=i_{f,+}(\ccN,F_\bbullet\ccN)$ for some morphism $f:X\to\PP^1$ and some $(\ccN,F_\bbullet\ccN)$ underlying a mixed Hodge module on $X$, one can adapt the proof given in \cite[Prop.\,1.6.9]{E-S-Y13} for $\ccN=\cO_X(*D)$, where $D$ is a normal crossing divisor, and $f^{-1}(\infty)\subset D$, but this case is not enough for our purposes. The proof that we give below uses the full strength of the theory of mixed twistor $\cD$-modules of T.\,Mochizuki \cite{Mochizuki11}.

\begin{proof}[Proof of Theorem \ref{th:Firr}]
We first note that the second assertion in the theorem (\ie the vanishing of $\cH^j$ for $j\neq0$) follows from the strictness assertion together with Lemma~\ref{lem:pasdeHk}. So let us consider the strictness assertion.

We refer to \cite[\S\S2 \& 3]{E-S-Y13} for the notation and results we use here. Given the filtered $\cD_{X\times\PP^1}$-module $(\ccM,F_\bbullet\ccM)$ underlying a mixed Hodge module, we associate to it the Rees module $\cM:=R_F\ccM=\bigoplus_pF_p\ccM\cdot\hb^p$, which is a graded $R_F\cD_{X\times\PP^1}$-module. Its analytification $\cM^\an$ (with respect to the $\hb$-variable) is part of the data defining an integrable mixed twistor $\cD_{X\times\PP^1}$-module, according to \cite[Prop.\,12.5.4]{Mochizuki11}.

Let us consider the graded $R_F\cD_{X\times\PP^1}$-module $R_{F^\Del}(\ccM\otimes\ccE^t)$. Our aim is to prove the strictness (\ie the absence of $\hb$-torsion) of the push-forward modules $\cH^jp_+R_{F^\Del}(\ccM\otimes\ccE^t)$. Forgetting the grading, $R_{F^\Del}(\ccM\otimes\ccE^t)$ can be obtained by using an explicit expression of the $V$-filtration as in \cite[Prop.\,3.1.2]{E-S-Y13}. It is enough to check the strictness property on the corresponding analytic object, by flatness. Now, the analytification $\big(R_{F^\Del}(\ccM\otimes\ccE^t)\big)^\an$ can be obtained by using the analytic $V$\nobreakdash-filtration, by making analytic the formula of \cite[Prop.\,3.1.2]{E-S-Y13}. We then use that the $V$-filtration behaves well by push-forward for mixed twistor $\cD$-modules, according to results of \cite{Mochizuki11}. This is the main argument for proving Theorem~\ref{th:Firr}.

Let us denote by $\wt\cM$ the (stupidly) localized module $\cM(*\infty)$ and by $\cFcM$ the (not graded) $(R_F\cD_{X\times\PP^1})[\tau]\langle\partiall_\tau\rangle$-module $\wt\cM[\tau]\otimes\cE^{t\tau/\hb}$. By Proposition \ref{prop:Etaufhb}, this is also $\cM[\tau]\otimes\cE^{t\tau/\hb}$. Similarly, $(\cFcM)^\an$ denotes its analytification with respect to both~$\tau$ and~$\hb$. We can use Proposition \ref{prop:EXf} together with \cite[Prop.\,11.3.4]{Mochizuki11} to ensure that $(\cFcM)^\an$ underlies an integrable mixed twistor $\cD$-module.

Let $p:X\times\PP^1\times\Afu_\tau\to X\times\Afu_\tau$ denote the projection. Then $p_+\cFcM^\an$ is strict, each $\cH^jp_+\cFcM^\an$ is strictly specializable along $\tau=0$ and the $V^\tau$-filtration satisfies $V_\bbullet^\tau\cH^jp_+\cFcM^\an=\cH^jp_+(V_\bbullet^\tau\cFcM^\an)$. Indeed, these properties are satisfied according to the main results of \cite{Mochizuki11}.

We will now adapt the proof given in \cite[\S3.2]{E-S-Y13}, which needs a supplementary argument, since we cannot argue with (3.2.2) in \loccit

According to \cite[Prop.\,3.1.2]{E-S-Y13}, we have a long exact sequence
\[
\cdots\to\cH^jp_+V_\alpha^\tau\cFcM^\an\To{\tau-\hb}\cH^jp_+V_\alpha^\tau\cFcM^\an\to\cH^jp_+(R_{F^\Del}(\ccM\otimes\ccE^t))^\an\to\cdots
\]
that we can thus rewrite as
\begin{multline}\label{eq:exseq}
\cdots\to V_\alpha^\tau\cH^jp_+\cFcM^\an\To{\tau-\hb}V_\alpha^\tau\cH^jp_+\cFcM^\an\\
\to\cH^jp_+(R_{F^\Del}(\ccM\otimes\ccE^t))^\an\to\cdots
\end{multline}

Let us first check that $\tau-\hb$ is injective on each $\cH^jp_+\cFcM^\an$. In the case considered in \cite[\S3.2]{E-S-Y13}, we could use (3.2.2) of \loccit, and when $X$ is reduced to a point, the argument in \cite{Bibi08} uses the solution to a Birkhoff problem given by M.\,Saito. We do not know how to extend the argument of \cite{Bibi08} to the case $\dim X\geq1$.

\begin{lemme}\label{lem:injective}
Let $Y$ be a complex manifold, $\cN^\an$ an $\cR_\cY$-module which underlies a mixed twistor $\cD$-module in the sense of \cite{Mochizuki11}. Let~$h$ be a holomorphic function on $Y$. Then the action of $h-\hb$ is injective on $\cN^\an$.
\end{lemme}

\begin{proof}
Since a mixed twistor $\cD$-module is in particular an object of the category $\mathrm{MTW}(Y)$ (\cf \cite[\S\S7.1.1\,\&\,7.2.1]{Mochizuki11}), a simple extension argument with respect to the weight filtration allows us to reduce to the case where $\cN^\an$ underlies a pure wild twistor $\cD$-module (as defined in \cite{Mochizuki08}). Since the question is local on $Y$, we fix some $y_o\in Y$ and work locally near $y_o$.

Assume first that $\cN^\an$ underlies a smooth pure twistor $\cD$-module. Then it is a locally free $\cO_\cY$-module with $\hb$-connection, and the injectivity of $h-\hb$ is clear.

In general, we know that $\cN^\an$ has a decomposition by the strict support (\cf \cite[\S3.5]{Bibi01c}, \cite[\S22.3.4]{Mochizuki08}, \cite[\S1.4]{Bibi06b}) and we can therefore assume that, near $y_o$, $\cN^\an$ has strict support a germ of an irreducible closed analytic subset $Z\subset Y$ at $y_o$. On a dense open set $Z^o$ of the smooth part of $Z$, due by Kashiwara's equivalence for pure twistor $\cD$-modules (see \loccit), we are reduced to the smooth case considered above and the injectivity holds. Therefore, $\ker[(h-\hb):\cN^\an\to\cN^\an]$ is supported on a proper closed analytic subset~$Z'$ of $Z$ in the neighbourhood of~$y_o$. Let $F_\bbullet\cN^\an$ be a good filtration of~$\cN^\an$ as an $(R_F\cD_Y)^\an$-module (which exists since we work locally on $Y$). Then for each $k$, $\ker[(h-\hb):F_k\cN^\an\to F_k\cN^\an]$ is a coherent $\cO_{Y\times\CC_\hb}$-submodule of $\cN^\an$ supported on $Z'$. The $(R_F\cD_Y)^\an$-submodule that it generates is a coherent $(R_F\cD_Y)^\an$-submodule of $\cN^\an$ supported on $Z'$. It is therefore zero since~$\cN^\an$ has strict support equal to $Z$. Since this holds for any $k$, we conclude that $\ker[(h-\hb):\cN^\an\to\cN^\an]=0$.
\end{proof}

Since $\cH^jp_+\cFcM^\an$ underlies a mixed twistor $\cD$-module, we infer from Lemma \ref{lem:injective} that $\tau-\hb$ is injective on each $\cH^jp_+\cFcM^\an$. We conclude that the long exact \hbox{sequence}~\eqref{eq:exseq} splits into short exact sequences and therefore $\cH^jp_+(R_{F^\Del}\ccM)^\an$ is identified with $V_\alpha^\tau\cH^jp_+\cFcM^\an/(\tau-\hb)V_\alpha^\tau\cH^jp_+\cFcM^\an$ for each~$j$. Proving that the later module is strict is a local question, near points with coordinates $(\tau,\hb)$ in the neighbourhood of $(\tau_o,\hb_o)$ with $\tau_o=\hb_o$.
\begin{enumerate}
\item
If $\tau_o=\hb_o=0$, we use that $V_\alpha^\tau\cH^jp_+\cFcM^\an/\tau V_\alpha^\tau\cH^jp_+\cFcM^\an$ is strict, due to the strict specializability of $\cH^jp_+\cFcM^\an$ along $\tau=0$ and it is enough to prove that~$\hb$ is injective on $V_\alpha^\tau\cH^jp_+\cFcM^\an/(\tau-\hb)V_\alpha^\tau\cH^jp_+\cFcM^\an$. Due to the strictness above, if a local section $m$ of $V_\alpha^\tau\cH^jp_+\cFcM^\an$ satisfies $\hb m=\tau m'$ for some local section $m'$ of $V_\alpha^\tau\cH^jp_+\cFcM^\an$, then there exists a local section $m''$ of $V_\alpha^\tau\cH^jp_+\cFcM^\an$ such that $m=\tau m''$, and since~$\tau$ is injective on $V_\alpha^\tau\cH^jp_+\cFcM^\an$ for $\alpha\in[0,1)$, we have $m'=\hb m''$. As a consequence, if a local section $m$ of $V_\alpha^\tau\cH^jp_+\cFcM^\an$ satisfies $\hb m=(\tau-\hb) m_1$ for some local section~$m_1$ of $V_\alpha^\tau\cH^jp_+\cFcM^\an$, then there exists a local section $m''$ of $V_\alpha^\tau\cH^jp_+\cFcM^\an$ such that $m+m_1=\tau m''$ and $m_1=\hb m''$, hence $m=(\tau-\hb)m''$, which gives the desired injectivity.

\item
We now assume that $\tau_o=\hb_o\neq0$. Near such a point, we have $V_\alpha^\tau\cH^jp_+\cFcM^\an=\cH^jp_+\cFcM^\an$. Let us remark, however, that the $V$-filtration of $\cFcM^\an$ along $\tau-\tau_o=0$ satisfies $V_k^{(\tau-\tau_o)}\cFcM^\an=\cFcM^\an$ for $k\geq0$ and $V_k^{(\tau-\tau_o)}\cFcM^\an=(\tau-\tau_o)^{-k}\cFcM^\an$ for $k\leq0$, according to \cite[Prop.\,4.1(iii)]{Bibi05b}. Applying the push-forward argument as above, we conclude that the $V$-filtration of $\cH^jp_+\cFcM^\an$ along $\tau-\tau_o=0$ satisfies the same property. Therefore, setting $\tau'=\tau-\tau_o$ and $\hb'=\hb-\hb_o$, we are reduced to proving the injectivity of $\hb'$ on $V_\alpha^{\tau'}\cH^jp_+\cFcM^\an/(\tau'-\hb') V_\alpha^{\tau'}\cH^jp_+\cFcM^\an$. We can then use the same argument as we used for the case $\tau_o=0$.
\qedhere
\end{enumerate}
\end{proof}

\section{The irregular Hodge filtration}\label{sec:irregHodgefiltration}
In this section, we come back to the setup of Theorem \ref{th:1main}. Let $f$ be a meromorphic function on~$X$ with pole divisor $P$ and let $(\ccN,F_\bbullet\ccN)$ be a filtered $\cD_X$-module underlying a mixed Hodge module such that $\ccN=\ccN(*P_\red)$. Let $(\ccM,F_\bbullet\ccM)$ be the mixed Hodge module on $X\times\PP^1$ associated to $(\ccN,F_\bbullet\ccN)$ by the construction of Remark~\ref{rem:graphMHM}. We know by Theorem \ref{th:Firr} that the complex $p_{X,+}R_{F^\Del_\bbullet}(\ccM\otimes\ccE^t)$ is strict and has cohomology in degree zero at most, hence $\cH^0p_{X,+}R_{F^\Del_\bbullet}(\ccM\otimes\ccE^t)$ is equal to the Rees module of $\ccN\otimes\ccE^f$ with respect to some good filtration, which we precisely define as $F^\irr_\bbullet(\ccN\otimes\ccE^f)$.

\begin{definition}\label{def:Firr}
The filtration $F_\bbullet^\irr(\ccN\otimes\ccE^f)$ is the filtration obtained by push-forward from $F_\bbullet^\Del(\ccM\otimes\ccE^t)$.
\end{definition}

\Subsection{Proof of Theorem \ref{th:1main}}\label{subsec:proofthmain}
\begin{enumerate}
\item
This is clear since it already holds for $F_\bbullet^\Del(\ccM\otimes\ccE^t)$.
\item
Because the category of mixed Hodge modules is abelian, we have an exact sequence of filtered $\cD$-modules underlying mixed Hodge modules:
\[
0\to(\ccN_0,F_\bbullet\ccN_0)\to(\ccN_1,F_\bbullet\ccN_1)\To{\varphi}(\ccN_2,F_\bbullet\ccN_2)\to(\ccN_3,F_\bbullet\ccN_3)\to0
\]
which gives rise to an exact sequence of filtered $\cD$-modules underlying mixed Hodge modules:
\[
0\to(\ccM_0,F_\bbullet\ccM_0)\to(\ccM_1,F_\bbullet\ccM_1)\To{\varphi}(\ccM_2,F_\bbullet\ccM_2)\to(\ccM_3,F_\bbullet\ccM_3)\to0
\]
and therefore, according to \cite[Th.\,3.0.1(2)]{E-S-Y13}, to an exact sequence of filtered $\cD$-modules:
\[
0\ra(\ccM_0\otimes\ccE^t,F^\Del_\bbullet)\ra(\ccM_1\otimes\ccE^t,F^\Del_\bbullet)\ra(\ccM_2\otimes\ccE^t,F^\Del_\bbullet)\ra(\ccM_3\otimes\ccE^t,F^\Del_\bbullet)\ra0.
\]
Applying $\cH^0p_+$ we keep an exact sequence, according to the second statement in Theorem \ref{th:Firr}.

\item
We consider the following diagram:
\[
\xymatrix@C=1.5cm{
X\times\PP^1\ar[d]_{p_X}\ar[r]^-{\pi\times\id_{\PP^1}}&Y\times\PP^1\ar[d]^{p_Y}\\
X\ar[r]^-\pi&Y
}
\]
We thus have
\[
\pi_+R_{F^\irr_\bbullet}(\ccN\otimes\ccE^f)\simeq(\pi\circ p_X)_+R_{F^\Del_\bbullet}(\ccM\otimes\ccE^t)\simeq(p_Y\circ(\pi\times\id))_+R_{F^\Del_\bbullet}(\ccM\otimes\ccE^t).
\]
On the other hand, according to \cite[Prop.\,3.2.3]{E-S-Y13}, $(\pi\times\id)_+R_{F^\Del_\bbullet}(\ccM\otimes\ccE^t)$ is strict and for each $j$,
\[
\cH^j(\pi\times\id)_+R_{F^\Del_\bbullet}(\ccM\otimes\ccE^t)\simeq R_{F^\Del_\bbullet}\big(\cH^j(\pi\times\id)_+\ccM\big)\otimes\ccE^t.
\]
Applying now Theorem \ref{th:Firr} to $\cH^j(\pi\times\id)_+(\ccM,F_\bbullet\ccM)$ we obtain the assertion.

\item
This point is similar to \cite[Prop.\,3.2.3]{E-S-Y13}.

\item
The case \ref{rem:ESY}\eqref{enum:ESY1} follows from \cite[Prop.\,1.6.12]{E-S-Y13}. Let us show the case \ref{rem:ESY}\eqref{enum:ESY2}. If $i:\PP^1_t\hto\PP^1_t\times\PP^1_s$ denotes the diagonal inclusion $t\mto(t,t)$ and \hbox{$p:\PP^1_t\times\PP^1_s\to\PP^1_t$} denotes the projection (and similarly after taking the product with~$X$), we have an isomorphism
\[
\ccM\otimes\ccE^t\simeq \cH^0p_+(i_+(\ccM\otimes\ccE^t))\simeq\cH^0p_+\big((i_+\ccM)\otimes\ccE^s\big).
\]
We claim that, for each $\alpha\in[0,1)$,
\begin{equation}\label{eq:Delirr*}
F_{\alpha+\bbullet}^\irr(\ccM\otimes\ccE^t)=F_{\alpha+\bbullet}^\Del(\ccM\otimes\ccE^t).
\end{equation}
It is enough to check
\begin{equation}\label{eq:Delirr**}
i_+\big(\ccM\otimes\ccE^t,F_{\alpha+\bbullet}^\Del(\ccM\otimes\ccE^t)\big)=\big((i_+\ccM)\otimes\ccE^s,F_{\alpha+\bbullet}^\Del((i_+\ccM)\otimes\ccE^s)\big),
\end{equation}
and the question is non obvious in the charts $t'=1/t$ and $s'=1/s$. Let us set $\delta=\delta(s'-t')$. Let us first recall that, by definition,
\begin{align}
i_+\ccM'&=\bigoplus_{k\geq0}i_*\ccM'\otimes\partial_{s'}^k\delta,\notag\\\label{eq:Fpi+}
F_p(i_+\ccM')&=\bigoplus_{k\geq0}i_*F_{p-k-1}\ccM'\otimes\partial_{s'}^k\delta,\\[-5pt]
\intertext{(the shift by one comes from the left-to-right transformation on $R_F\cD$-modules) and, concerning the $V$-filtration, one checks that\footnotemark}
V_\alpha^{s'}(i_+\ccM')&=\sum_{k\geq0}\partial_{t'}^k(V_\alpha^{t'}\ccM'\otimes\delta).\notag
\end{align}
\footnotetext{The formula in the published version is not correct. We thank Takahiro Saito for noticing the mistake}%
We set $G_p(i_+\ccM')=\bigoplus_{0\leq k\leq p}i_*\ccM'\otimes\partial_{s'}^k\delta$. Let $\sum_{k=0}^\ell \partial_{t'}^k(m_{\alpha,k}\otimes\delta)\in V_\alpha^{s'}(i_+\ccM')$. Then this term is contained in $G_\ell(i_+\ccM')$, and its image in $G_\ell(i_+\ccM')/G_{\ell-1}(i_+\ccM')$ is the class of $m_{\alpha,\ell}\otimes\partial_{s'}^\ell\delta$, hence it is nonzero if and only if $m_{\alpha,\ell}\neq0$. In particular, $V_\alpha^{s'}(i_+\ccM')\cap G_0(i_+\ccM')=V_\alpha^{t'}\ccM'\otimes\delta$.

We recall (\cf\cite[(3.1.1)]{E-S-Y13}):
\[
F_{\alpha+p}^\Del(\ccM'\otimes\ccE^{1/t'})=\sum_{k\geq0}\partial_{t'}^k\tpm\bigl[(F_{p-k}\ccM'\cap V_\alpha^{t'}\ccM')\otimes\ccE^{1/t'}\bigr],
\]
and, by the analogue of \eqref{eq:Fpi+},
\[
i_+\big(F_{\alpha+\bbullet}^\Del(\ccM'\otimes\ccE^{1/t'})\big)_p=F_{\alpha+p-1}^\Del(\ccM'\otimes\ccE^{1/t'})\otimes\delta+\partial_{s'}\big[i_+\big(F_{\alpha+\bbullet}^\Del(\ccM'\otimes\ccE^{1/t'})\big)_{p-1}\big].
\]
On the other hand,
\begin{align*}
F_{\alpha+p}^\Del\big((i_+\ccM')\otimes\ccE^{1/s'}\big)&=\sum_{k\geq0}\partial_{s'}^ks^{\prime-1}\bigl[(F_{p-k}(i_+\ccM')\cap V_\alpha^{s'}(i_+\ccM'))\otimes\ccE^{1/s'}\bigr]\\
&=s^{\prime-1}\big(F_p(i_+\ccM')\cap V_\alpha^{s'}(i_+\ccM')\big)\otimes\reso\\[-3pt]
&\hspace*{3.8cm}+\partial_{s'}F_{\alpha+p-1}^\Del\big((i_+\ccM')\otimes\ccE^{1/s'}\big).
\end{align*}
We prove \eqref{eq:Delirr**} by induction on $p$. Let $p_o$ be such that $F_{p_o-2}\ccM'=0$. Then $F_{p_o}(i_+\ccM')=F_{p_o-1}\ccM'\otimes\delta\subset G_0(i_+\ccM')$, $F_{\alpha+p_o-1}^\Del\big((i_+\ccM')\otimes\ccE^{1/s'}\big)=0$ and
\begin{align*}
F_{\alpha+p_o}^\Del\big((i_+\ccM')\otimes\ccE^{1/s'}\big) &=s^{\prime-1}\big(F_{p_o}(i_+\ccM')\cap V_\alpha^{s'}(i_+\ccM')\big)\otimes\reso\\
&=s^{\prime-1}\big(F_{p_o-1}\ccM'\cap V_\alpha^{t'}\ccM'\big)\otimes(\delta\otimes\reso)\\
&=\big(\tpm(F_{p_o-1}\ccM'\cap V_\alpha^{t'}\ccM')\otimes\retp\big)\otimes\delta\\
&=i_+\big(F_{\alpha+\bbullet}^\Del(\ccM'\otimes\ccE^{1/t'})\big)_{p_o}.
\end{align*}

We now assume that \eqref{eq:Delirr**} holds for $p-1$. Let us first show that
\[
i_+\big(F_{\alpha+\bbullet}^\Del(\ccM'\otimes\ccE^{1/t'})\big)_p\subset F_{\alpha+p}^\Del\big((i_+\ccM')\otimes\ccE^{1/s'}\big).
\]
By induction and the above formula for $i_+\big(F_{\alpha+\bbullet}^\Del(\ccM'\otimes\ccE^{1/t'})\big)_p$, it is enough to check
\[
i_*F_{\alpha+p-1}^\Del(\ccM'\otimes\ccE^{1/t'})\otimes\delta\subset F_{\alpha+p}^\Del\big((i_+\ccM')\otimes\ccE^{1/s'}\big).
\]
We have
\[
F_{\alpha+p-1}^\Del(\ccM'\otimes\ccE^{1/t'})=\tpm(F_{p-1}\ccM'\cap V_\alpha^{t'}\ccM')\otimes\retp+\partial_{t'}\big(F_{\alpha+p-2}^\Del(\ccM'\otimes\ccE^{1/t'})\big).
\]
Then on the one hand, by induction,
\begin{align*}
i_*\partial_{t'}\big(F_{\alpha+p-2}^\Del(\ccM'\otimes\ccE^{1/t'})&\big)\otimes\delta\subset\partial_{t'}\big[i_*\big(F_{\alpha+p-2}^\Del(\ccM'\otimes\ccE^{1/t'})\big)\otimes\delta\big]\\[-3pt]
&\hspace*{3.65cm}+\partial_{s'}\big[i_*\big(F_{\alpha+p-2}^\Del(\ccM'\otimes\ccE^{1/t'})\big)\otimes\delta\big]\\
&\subset\partial_{t'}F_{\alpha+p-1}^\Del\big((i_+\ccM')\otimes\ccE^{1/s'}\big)+\partial_{s'}F_{\alpha+p-1}^\Del\big((i_+\ccM')\otimes\ccE^{1/s'}\big)\\
&\subset F_{\alpha+p}^\Del\big((i_+\ccM')\otimes\ccE^{1/s'}\big).
\end{align*}
On the other hand, $i_*\big[\tpm(F_{p-1}\ccM'\cap V_\alpha^{t'}\ccM')\otimes\retp\big]\otimes\delta$ is the degree zero term (w.r.t.~$G_\bbullet$) in $F_{\alpha+p}^\Del\big((i_+\ccM')\otimes\ccE^{1/s'}\big)$.

Let us now prove the reverse inclusion
\[
F_{\alpha+p}^\Del\big((i_+\ccM')\otimes\ccE^{1/s'}\big)\subset i_+\big(F_{\alpha+\bbullet}^\Del(\ccM'\otimes\ccE^{1/t'})\big)_p.
\]
By induction and the above formula for $F_{\alpha+p}^\Del\big((i_+\ccM')\otimes\ccE^{1/s'}\big)$, it is enough to prove
\[\tag{$*$}
s^{\prime-1}\big(F_p(i_+\ccM')\cap V_\alpha^{s'}(i_+\ccM')\big)\otimes\reso\subset i_+\big(F_{\alpha+\bbullet}^\Del(\ccM'\otimes\ccE^{1/t'})\big)_p.
\]

Let $m=\sum_{j=0}^\ell\partial_{t'}^j(m_{\alpha,j}\otimes\delta)$ be in $F_p(i_+\ccM')\cap V_\alpha^{s'}(i_+\ccM')$. We wish to prove that it belongs to the right-hand side of $(*)$. Assume $m_{\alpha,\ell}\neq0$, so that $m_{\alpha,\ell}\in F_{p-1-\ell}\ccM'\cap V_\alpha^{t'}(\ccM')$. Then $m_{\alpha,\ell}\otimes\delta\in F_{p-\ell}(i_+\ccM')\cap V_\alpha^{s'}(i_+\ccM')$ and $\partial_{t'}^\ell(m_{\alpha,j}\otimes\delta)\in F_p(i_+\ccM')\cap V_\alpha^{s'}(i_+\ccM')$. It follows that each nonzero term in the sum expressing $m$ also belongs to $F_p(i_+\ccM')\cap V_\alpha^{s'}(i_+\ccM')$, and it is enough to prove the desired inclusion for $m=\partial_{t'}^\ell(m_{\alpha,\ell}\otimes\delta)$, that is,
\[\tag{$**$}
\partial_{t'}^\ell \tpm(m_{\alpha,\ell}\otimes\delta\otimes\reso)\in i_+\big(F_{\alpha+\bbullet}^\Del(\ccM'\otimes\ccE^{1/t'})\big)_p.
\]

The left-hand side of $(**)$ reads $\partial_{t'}^\ell \tpm\bigl[(m_{\alpha,\ell}\otimes\retp) \otimes\delta\bigr]$ and is a sum of terms
\[
\bigl[\partial_{t'}^{\ell-j}\tpm(m_{\alpha,\ell}\otimes\retp)\bigr] \otimes\partial_{s'}^j\delta,\quad j=0,\dots,\ell.
\]
We have
\[
\partial_{t'}^{\ell-j}\tpm(m_{\alpha,\ell}\otimes\retp)\in \partial_{t'}^{\ell-j}\tpm(F_{p-1-\ell}\ccM'\cap V_\alpha^{t'}\ccM')\otimes\retp\subset F_{\alpha+p-1-j}^\Del (\ccM'\otimes\ccE^{1/t'}),
\]
and thus
\[
\bigl[\partial_{t'}^{\ell-j}\tpm(m_{\alpha,\ell}\otimes\retp)\bigr] \otimes\partial_{s'}^j\delta\in F_{\alpha+p-1-j}^\Del (\ccM'\otimes\ccE^{1/t'})\otimes\partial_{s'}^j\delta\subset i_+\big(F_{\alpha+\bbullet}^\Del(\ccM'\otimes\ccE^{1/t'})\big)_p,
\]
as wanted.\qed
\end{enumerate}

\begin{remarque}
Let $f:X\to\PP^1$ be a morphism and let $(\ccN,F_\bbullet\ccN)$ underlie a mixed Hodge module. It follows from \ref{th:1main}\eqref{th:1main3} and \eqref{th:1main4} that $i_{f,+}(\ccN\otimes\ccE^f,F_\bbullet^\irr)=(\ccM\otimes\ccE^t,F^\Del_\bbullet)$, if we set as above $(\ccM,F_\bbullet\ccM)=i_{f,+}(\ccN,F_\bbullet\ccN)$.
\end{remarque}

\subsection{The irregular Hodge filtration in terms of \texorpdfstring{$V^\tau_\alpha$}{Va}}\label{subsec:irregV}
With the notation as in the beginning of this section, we consider the pull-back module $\ccN[\tau]$ by the projection $r:X\times\nobreak\Afu_\tau\to X$ and the corresponding Rees object $R_F\ccN[\tau]=:\cN[\tau]$, where $\cN:=R_F\ccN$. Denote by $\cN^\an$ the analytification of $\cN$ and by $r^+\cN^\an$ that of $\cN[\tau]$. We twist $r^+\cN^\an$ by~$\cE^{\tau f/\hb}$ to obtain the object $\cFfcN$, which underlies an object of $\MTM^\mathrm{int}(X\times\nobreak\Afu_\tau)$, according to Propositions \ref{prop:EXf} and \ref{prop:Etaufhb}, and to \cite[Prop.\,11.3.4 \& 12.5.4]{Mochizuki11}.

\begin{proposition}\label{prop:FirrValphatau}
We have $R_{F_\alpha^\irr}(\ccN\otimes\ccE^f)^\an=V^\tau_\alpha(\cFfcN)/(\tau-\hb)V^\tau_\alpha(\cFfcN)$.
\end{proposition}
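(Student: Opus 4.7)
The strategy is to use the graph construction to lift everything to $X\times\PP^1\times\Afu_\tau$ and then apply the short-exact-sequence mechanism from the proof of Theorem \ref{th:Firr}, combined with the compatibility of the $V^\tau$-filtration with projective pushforward. Let $p:X\times\PP^1\times\Afu_\tau\to X\times\Afu_\tau$ denote the projection used throughout the proof of Theorem \ref{th:Firr}.

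The key step is the identification $\cH^0 p_+\cFcM^\an\simeq\cFfcN$ (together with $\cH^j p_+\cFcM^\an=0$ for $j\neq0$). At the underlying $\cD$-module level, this is Lemma \ref{lem:if+} applied with the parameter $\tau$ in place of the constant $1$ (together with $p_+i_{f,\oplus}=\id$), combined with the $\tau$-family version of Lemma \ref{lem:pasdeHk}. To upgrade it to the integrable $\cR_\cX$-module setting, one uses Propositions \ref{prop:EXf} and \ref{prop:Etaufhb} to guarantee that both $\cFcM^\an$ and $\cFfcN$ underlie objects of $\MTM^\mathrm{int}$ and that Mochizuki's framework is compatible with the graph-type identification. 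The vanishing of $\cH^j p_+$ for $j\neq0$ follows from pure irregularity along $t=\infty$, exactly as in the proof of Lemma \ref{lem:pasdeHk}.

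Granting this, the compatibility of the $V^\tau$-filtration with projective pushforward of integrable mixed twistor $\cD$-modules, already invoked in the proof of Theorem \ref{th:Firr} (from \cite{Mochizuki11}), yields
\begin{equation*}
V^\tau_\alpha\cFfcN \;=\; V^\tau_\alpha\cH^0 p_+\cFcM^\an \;=\; \cH^0 p_+(V^\tau_\alpha\cFcM^\an).
\end{equation*}
On the other hand, the proof of Theorem \ref{th:Firr} extracts from \cite[Prop.\,3.1.2]{E-S-Y13}, together with the injectivity of $\tau-\hb$ guaranteed by Lemma \ref{lem:injective}, the short exact sequence
\begin{equation*}
0\to V^\tau_\alpha\cH^0 p_+\cFcM^\an \To{\tau-\hb} V^\tau_\alpha\cH^0 p_+\cFcM^\an \to \cH^0 p_+\bigl(R_{F^\Del_\alpha}(\ccM\otimes\ccE^t)\bigr)^\an \to 0,
\end{equation*}
whose rightmost term, by Definition \ref{def:Firr}, equals $R_{F^\irr_\alpha}(\ccN\otimes\ccE^f)^\an$. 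Substituting the previous display for the first two terms produces the announced equality.

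The main obstacle is the first step: rigorously transporting Lemmas \ref{lem:if+} and \ref{lem:pasdeHk} to the $\cR_\cX$-module/integrable MTM level with the additional parameter $\tau$, and checking that the resulting isomorphism respects the strict specializability along $\tau=0$ so that the subsequent identification of $V^\tau_\alpha$'s is automatic. Once this is secured, the remainder of the argument is a formal combination of already-established facts from the proof of Theorem \ref{th:Firr}.
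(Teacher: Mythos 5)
Your argument is essentially the paper's own proof: the paper establishes the identity for $(\ccM,F_\bbullet\ccM)$ on $X\times\PP^1$ via \cite[Prop.\,3.1.2]{E-S-Y13} and \eqref{eq:Delirr*}, and then observes that both operations $V^\tau_\alpha/(\tau-\hb)V^\tau_\alpha$ and $R_{F^\irr_\alpha}$ commute with $\cH^0p_+$ — which is exactly your combination of the short exact sequence from the proof of Theorem~\ref{th:Firr}, Lemma~\ref{lem:injective}, the $V^\tau$-compatibility with push-forward, and the identification $\cH^0p_+\cFcM^\an\simeq\cFfcN$. The step you flag as the main obstacle is indeed the one the paper leaves implicit in the phrase ``applying the same argument as in the proof of Theorem~\ref{th:Firr}'', and your reduction of it to the $\tau$-family version of Lemma~\ref{lem:if+} together with Propositions~\ref{prop:EXf} and~\ref{prop:Etaufhb} is the right justification.
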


\begin{proof}
We associate to the mixed Hodge module $(\ccN,F_\bbullet\ccN)$ the mixed Hodge module $(\ccM,F_\bbullet\ccM)$ as in Remark~\ref{rem:graphMHM} (from which we keep the notation). It follows from \eqref{eq:Delirr*} and \cite[Prop.\,3.1.2]{E-S-Y13} that the result holds for $(\ccM,F_\bbullet\ccM)$ on $X\times\PP^1$ and for $f$ equal to the projection to $\PP^1$ (note that it holds without taking~``$\an$''). Applying the same argument as in the proof of Theorem \ref{th:Firr}, we conclude that the operation $V_\alpha^\tau/(\tau-\hb)V_\alpha^\tau$ commutes with $\cH^0p_+$. On the other hand, by definition and according to \eqref{eq:Delirr*}, the operation $R_{F^\irr_\alpha}$ is compatible with $\cH^0p_+$. Therefore, the result holds for $(\ccN,F_\bbullet\ccN)$.
\end{proof}

\begin{remarque}\label{rem:FirrValphatau}
As a consequence, one can recover the graded module $R_{F_\alpha^\irr}(\ccN\otimes\ccE^f)$ from $V^\tau_\alpha(\cFfcN)$ in the following way. As an $\cO_X[\hb]$-module, we have an inclusion $R_{F_\alpha^\irr}(\ccN\otimes\ccE^f)\subset\ccN[\hb,\hb^{-1}]$ and $R_{F_\alpha^\irr}(\ccN\otimes\ccE^f)$ is obtained from $R_{F_\alpha^\irr}(\ccN\otimes\ccE^f)^\an$ as the graded module with respect to the filtration on $R_{F_\alpha^\irr}(\ccN\otimes\ccE^f)^\an$ induced by the $\hb$-adic filtration of $\cO_\cX\otimes_{\cO_X[\hb]}\ccN[\hb,\hb^{-1}]$. By the proposition above, it is thus enough to identify the inclusion as $\cO_\cX$-modules
\begin{starequation}\label{eq:FirrValphatau}
V^\tau_\alpha(\cFfcN)/(\tau-\hb)V^\tau_\alpha(\cFfcN)\subset\cO_\cX\otimes_{\cO_X[\hb]}\ccN[\hb,\hb^{-1}].
\end{starequation}%
By using the strict specializability of $\cFfcN$ along $\tau=0$, one checks as in \cite[Proof of Prop.\,3.1.2]{E-S-Y13} that $(\tau-\hb)V^\tau_\alpha(\cFfcN)=V^\tau_\alpha(\cFfcN)\cap(\tau-\hb)\cFfcN$, so that
\[
V^\tau_\alpha(\cFfcN)/(\tau-\hb)V^\tau_\alpha(\cFfcN)\subset\cFfcN/(\tau-\hb)\cFfcN.
\]
Let us set (recall that $\ccN=\ccN(*P_\red)$)
\begin{align*}
\wt\cN&=(R_F\ccN)(*P_\red)=\bigoplus_pF_p\ccN(*P_\red)\hb^p\subset\ccN[\hb,\hb^{-1}],\\
\wt\cN^\an&=\cO_\cX\otimes_{\cO_X[\hb]}\wt\cN\subset\cO_\cX\otimes_{\cO_X[\hb]}\ccN[\hb,\hb^{-1}].
\end{align*}
As an $\cO_{\cX\times\CC_\tau}$-module we have $\cFfcN=r^*\wt\cN^\an$ and thus as an $\cO_\cX$-module we have $\cFfcN/(\tau-\hb)\cFfcN=\wt\cN^\an$. This gives the desired inclusion \eqref{eq:FirrValphatau}.
\end{remarque}

\part{The case of a rescaled meromorphic function}\label{part:2}

\section{Kontsevich bundles via \texorpdfstring{$\cD$}{D}-modules}\label{sec:intropart2}
In Part \ref{part:2}, we use the setting and notation of \S\ref{subsec:rescaling}. It will also be convenient to work algebraically with respect to $\PP^1$, in which case we will consider the $\cDXv$-module $E^{vf}(*H):=\cO_X(*D)[v]\cdot\revf$ and the $\cDXu$-module $E^{f/u}(*H):=\cO_X(*D)[u,u^{-1}]\cdot\refu$, where $\revf$ and $\refu$ are other notations for $1$ which make clear the twist of the connection.

\subsection{The Laplace Gauss-Manin bundles $\cH^k(\alpha)$}
The bundles $\cH^k(\alpha)$ on $\PP^1_v$ will be obtained by gluing bundles on $\Afu_v$ and on $\Afu_u$, that we describe below.

\subsubsection*{Over the chart $\Afu_v$}
Let us denote by $\cH^k_v$ the restriction of $\cH^k$ (\cf \S\ref{subsec:varirrhodgefiltrkontsevichbundles}) to the $v$\nobreakdash-chart. This is nothing but the Laplace transform of the $(k-\dim X)$th Gauss-Manin system of~$f$. It is known to have a regular singularity at $v=0$ and no other singularity at finite distance (as follows by push-forward from the arguments recalled in \S\ref{subsec:VfiltEv}, or by a general result about Laplace transform of regular holonomic $\cD$-modules in one variable). Moreover, $\cH^k_v$ is equipped with the push-forward filtration $F_\bbullet^\irr\cH^k_v$ by $\cO_{\Afu_v}$\nobreakdash-coherent subsheaves, in a strict way according to Theorem \ref{th:1main}. On $\GGm$ we obtain in such a way a flat bundle $(\cH^k_{|\GGm},\nabla)$ equipped with a filtration $F_\bbullet^\irr\cH^k_{|\GGm}$ indexed by $\QQ$, which satisfies Griffiths transversality condition with respect to $\nabla$ (\cf\S\ref{subsec:deRhamYu}, \cf also Remark \ref{rem:HSMochizuki}). This is the variation with respect to $v$ of the irregular Hodge filtration of $H^k(X,\DR\ccE^{vf}(*H))$.

We consider the limiting filtration (in the sense of Schmid) when $v\to0$. For $\alpha\in[0,1)$, let us denote by $V_\alpha\cH_v^k$ the $\alpha$th term of the Kashiwara-Malgrange \hbox{filtration} of~$\cH_v^k$ at $v=0$. Equivalently, due to the regularity property of the connection at $v=\nobreak0$, $V_\alpha\cH_v^k$ is the Deligne extension of $\cH^k_{|\GGm}$ on which $\nabla$ has a simple pole with residue $\Res_{v=0}\nabla$ having eigenvalues in $[-\alpha,-\alpha+1)$. We set $\gr^V_\alpha\cH_v^k=V_\alpha\cH_v^k/V_{<\alpha}\cH_v^k$.

\begin{theoreme}\label{th:3main}
For each $\alpha\in[0,1)$,
\begin{enumerate}
\item\label{th:3main1}
the jumps $\beta\in\QQ$ of the induced filtration\enlargethispage{\baselineskip}%
\[
F_\bbullet^\irr \gr^V_\alpha\cH_v^k:=\frac{F_\bbullet^\irr\cH^k\cap V_\alpha\cH_v^k}{F_\bbullet^\irr\cH^k\cap V_{<\alpha}\cH_v^k}
\]
belong to $\alpha+\ZZ$,
\item\label{th:3main2}
on each generalized eigenspace of $\Res_{v=0}\nabla$ acting on $V_\alpha\cH_v^k/vV_\alpha\cH_v^k$, the nilpotent part of the residue strictly shifts by one the filtration naturally induced by $F_\bbullet^\irr V_\alpha\cH_v^k$.
\end{enumerate}
\end{theoreme}

Our proof in \S\ref{subsec:nearbymonodromy} is obtained by showing (Proposition \ref{prop:FMHS}) that the conditions needed for applying M.\,Saito's criterion \cite[Prop.\,3.3.17]{MSaito86} are fulfilled. More precisely, we will work with a filtration $F_\bbullet\ccE^{vf}(*\ccH)$ easy to define, and we postpone to~\S\ref{sec:relwithFirr} the proof that this is indeed the irregular Hodge filtration of $\ccE^{vf}(*\ccH)$. It would also be possible, as observed by T\ptbl Mochizuki \cite{Mochizuki15a}, to directly refer to a similar property for twistor $\cD$\nobreakdash-modules.

\subsubsection*{Over the chart $\Afu_u$}

Let us now consider the chart~$\Afu_u$. We denote by $\cH^k_u$ the restriction of $\cH^k$ to this chart. If $j:\Afu_u\moins\{0\}\hto\Afu_u$ denotes the open inclusion, we have $\cH^k_u=j_+\cH^k_{|\GGm}$. There is a natural $\cO_{\Afu_u}$-lattice $G_0\cH_u^k$ of the free $\cO_{\Afu_u}[u^{-1}]$-module $\cH^k_u$ called the Brieskorn lattice in analogy with the construction of Brieskorn in singularity theory \cite{Brieskorn70}. It can be defined in terms of the Hodge filtration of the Gauss-Manin system attached to $f$ (\cf the appendix). It can also be defined (\cf\S\ref{subsec:qEu}) as the push-forward by $q$ in a suitable sense of an $\cO_{X\times\Afu_u}(*\ccP_\red)$-module $G_0\ccE^{f/u}(*\ccH)$ equipped with a $u$-connection $u\rd+\rd f:G_0\ccE^{f/u}(*\ccH)\to G_0\ccE^{f/u}(*\ccH)\otimes\Omega_X^1$ and with a compatible action of $u^2\partial_u$.

The connection on $\cH^k$ has a pole of order at most two at $u=0$ when restricted to $G_0\cH_u^k$ (\cf Remark \ref{rem:actionud2u}). In the context of $\cDXu$-modules $\ccE^{f/u}(*\ccH)$ corresponds to $E^{f/u}(*H)=\cO_X(*D)[u,u^{-1}]\refu$.

\subsubsection*{Gluing}

We can then glue $G_0\cH_u^k$ with $V_\alpha\cH_v^k$ and obtain an $\cO_{\PP^1}$-bundle $\cH^k(\alpha)$ with a connection having a pole of order one at $v=0$ and of order two at $u=0$.

\subsection{The Kontsevich bundles $\cK^k(\alpha)$}\label{subsec:KB}
We now consider the Kontsevich bundles introduced in \S\ref{subsec:varirrhodgefiltrkontsevichbundles}. We can endow them with a natural meromorphic connection having a pole of order one at $v=0$ and of order two at most at $v=\infty$, and no other pole.

In order to do so, we start\footnote{This was suggested to us by T\ptbl Mochizuki.} by considering the morphism of complexes
\[
u^2\partial_u-f:\big(\Omega_f^\cbbullet(\alpha)[u],u\rd+\rd f\big)\to\big(\Omega_f^\cbbullet(\alpha+1)[u],u\rd+\rd f\big).
\]

\begin{lemme}\label{lem:omegaalphau}
For $\alpha\in[0,1)$, the natural inclusion of complexes
\[
(\Omega_f^\cbbullet(\alpha)[u],u\rd+\rd f)\to(\Omega_f^\cbbullet(\alpha+1)[u],u\rd+\rd f)
\]
is a quasi-isomorphism.
\end{lemme}

This lemma allows us to define an action of $u^2\partial_u$ on each $\cK^k(\alpha)_{|\Afu_u}$ and therefore a meromorphic connection $\nabla$ on $\cK^k(\alpha)$ with a pole of order at most two at $u=0$. We will show that $\nabla$ has at most a simple pole at $v=0$.

\begin{remarque}[due to T\ptbl Mochizuki]\label{rem:HSMochizuki}
Let $\cH$ be a vector bundle on $\PP^1$ equipped with a connection $\nabla$ having a simple pole at $v=0$ and a double pole (at most) at $v=\nobreak\infty$. Then the Harder-Narasimhan filtration $F^\bbullet\cH$ satisfies the Griffiths transversality property with respect to $\nabla$.

Indeed, the property is obviously true with respect to the connection $\rd$ on $\cH$ coming from $\rd$ on each summand in a Birkhoff-Grothendieck decomposition. We are thus reduced to proving a similar property for the $\cO$-linear morphism $\nabla-\rd$, and the result follows by noticing that $(\cH/F^{p-1}\cH)\otimes\Omega^1_{\PP^1}(\{v=0\}+2\{u=0\})$ has slopes~$<p$ while $F^p\cH$ has slopes $\geq p$.
\end{remarque}

\begin{proof}[Proof of Lemma \ref{lem:omegaalphau}]
We will show that the quotient complex has zero cohomology. From \cite[Prop.\,1.4.2]{E-S-Y13} we know that the inclusion of complexes
\[
(\Omega_f^\cbbullet(\alpha),\rd f)\to(\Omega_f^\cbbullet(\alpha+1),\rd f)
\]
is a quasi-isomorphism, and thus the quotient complex $(\cQ^\cbbullet,\rd f)$ has zero cohomology. Let $\omega=\sum_{j=0}^k\omega_ju^j$ be a local section of $\cQ^p[u]$ such that $(u\rd+\rd f)(\omega)=0$. Then $\rd f\wedge\omega_0=0$ and therefore there exists $\eta_0\in\cQ^{p-1}$ such that $\omega=\rd f\wedge\eta_0$. By replacing~$\omega$ with $\omega-(u\rd+\rd f)\eta_0$ and iterating the process we can assume that $\omega=\omega_ku^k$ and, dividing by $u^k$, that $\omega\in\cQ^p$. It satisfies then $\rd\omega=0$ and $\rd f\wedge\omega=0$, so $\omega=\rd f\wedge\eta$ for some $\eta\in\cQ^{p-1}$, and therefore $\rd f\wedge\rd\eta=0$. For any representative $\wt\eta\in\Omega_f^{p-1}(\alpha+1)$, we obtain
\[
\rd f\wedge\rd\wt\eta\in\Omega_f^{p+1}(\alpha)\subset\Omega_X^{p+1}(\log D)([\alpha P]).
\]
On the other hand, we note that
\[
\Omega_f^{p-1}(\alpha+1)=\rd f\wedge\Omega_X^{p-2}(\log D)([\alpha P])+\Omega_X^{p-1}(\log D)([\alpha P]),
\]
so we can assume that $\wt\eta\in\Omega_X^{p-1}(\log D)([\alpha P])$. Then $\rd\wt\eta\in\Omega_X^p(\log D)([\alpha P])$, and therefore $\rd\wt\eta\in\Omega_f^p(\alpha)$, that is, $\rd\eta=0$, so $\omega=(u\rd+\rd f)\eta$.
\end{proof}

\begin{proof}[Proof of Theorem \ref{th:2main}]
We will compare the filtered complex $\sigma^{\geq p}(\Omega_f^\cbbullet(\alpha)[v],\rd+\nobreak v\rd f)$ with the filtered relative de~Rham complex of $\ccE^{vf}(*\ccH)$ with respect to the projection to~$\Afu_v$. We introduce in \S\ref{subsec:FalphaE} a filtration $F_\alpha^\cbbullet\ccE^{vf}(*\ccH)$, which will be shown to coincide with $F_\alpha^{\irr,\cbbullet}\ccE^{vf}(*\ccH)$ in Theorem \ref{th:FFirr}.

\begin{theoreme}[\cf\S\ref{subsec:pf4main}, modulo Th.\,\ref{th:FFirr}]\label{th:4main}
There is a natural quasi-isomorphism of filtered complexes
\[
(\cO_{X\times\Afu_v}\otimes_{\cO_X[v]}\Omega_f^\cbbullet(\alpha)[v],\rd+v\rd f,\sigma^{\geq p})\to(\DR_{X\times\Afu_v/\Afu_v}V_\alpha \ccE^{vf}(*\ccH),F_\alpha^{\irr,p})
\]
which is compatible with the meromorphic action of $\nabla_{\partial_v}$.
\end{theoreme}

It follows from \eqref{eq:E1degenK} that applying $\bR q_*$ to the filtered complex on the right-hand side gives a strict complex (\ie we have a similar injectivity statement).

We apply $R^kq_*$ to the quasi-isomorphism of Theorem \ref{th:4main}. The non-filtered statement gives the first point of Theorem \ref{th:2main}, since $V_\alpha$ is compatible with proper push-forward. The second point is then obtained by applying the second point of Theorem~\ref{th:3main}.
\end{proof}

In a way similar to Theorem \ref{th:4main}, but algebraically with respect to $u$, we introduce in \S\ref{subsec:FalphaG0} a filtration $F_\alpha^\cbbullet G_0\ccE^{f/u}(*\ccH)$, which will be shown to coincide with $F_\alpha^{\irr,\cbbullet}G_0\ccE^{f/u}(*\ccH)$ in Theorem \ref{th:FFirr}, and we prove:

\begin{theoreme}[\cf\S\ref{subsec:pf5main}, modulo Th.\,\ref{th:FFirr}]\label{th:5main}
There is a natural quasi-isomorphism of filtered complexes
\[
(\Omega_f^\cbbullet(\alpha)[u],u\rd+\rd f,\sigma^{\geq p})\to(\DR_XG_0E^{f/u}(*H),F_\alpha^{\irr,p})
\]
which is compatible with the action of $\nabla_{\partial_u}$.
\end{theoreme}

As above, it follows from \eqref{eq:E1degenK} that applying $\bR q_*$ to the filtered complex on the right-hand side gives a strict complex.

By applying a degeneration statement similar to that of \cite[Prop.\,3.3.17]{MSaito86} proved in the appendix, we obtain a concrete description of the irregular Hodge filtration of~$\cH^k$.

\begin{corollaire}\label{cor:HNfiltr}
The isomorphism $\cK^k(\alpha)\isom\cH^k(\alpha)$ obtained by pushing forward the quasi-isomorphisms of Theorems \ref{th:4main} and \ref{th:5main} identifies the Harder-Narasimhan filtration of $\cK^k(\alpha)$ (hence of $\cH^k(\alpha)$) with the image on $\cH^k(\alpha)$ of the irregular Hodge filtration $F^\irr\cH^k$.
\end{corollaire}

\begin{remarque}
Another proof of Theorem \ref{th:2main} has recently been given by T.\,Mochizuki \cite{Mochizuki15a}, by showing an analogue of Theorem \ref{th:4main} in the framework of mixed twistor $\cD$-modules, but not referring explicitly to the irregular Hodge filtration.
\end{remarque}

\section{The \texorpdfstring{$\cD_{X\times\Afu_v}$}{DXAv}-module \texorpdfstring{$\ccE^{vf}(*\ccH)$}{EvfH}}\label{sec:Evf}

\subsection{Setting}\label{subsec:setting}
We will use the local setting and notation similar to that of \cite[\S1.1]{E-S-Y13} that we recall now, together with the notation introduced in \S\ref{subsec:rescaling}. In the local analytic setting, the space $X^\an$ is the product of discs $\Delta^\ell\times\Delta^m\times\Delta^{m'}$ with coordinates $(x,y)=(x_1,\dots,x_\ell,y_1,\dots,y_m,y'_1,\dots,y'_{m'})$ and we are given a multi-integer $\bme=\nobreak(e_1,\dots,e_\ell)\in(\ZZ_{>0})^\ell$ for which we set:
\begin{itemize}
\item
$f(x,y)=x^{-\bme}:=\prod_{i=1}^\ell x_i^{-e_i}$,
\item
$g(x,y)=1/f(x,y)=x^{\bme}$
\item
$P_\red=\{\prod_{i=1}^\ell x_i=0\}$, $H=\{\prod_{j=1}^m y_j=0\}$, $D=P_\red\cup H$.
\end{itemize}

Set $\cO=\CC\{x,y,y'\}$ and $\cD=\cO\langle\partial_x,\partial_y,\partial_{y'}\rangle$ is the ring of linear differential operators with coefficients in $\cO$, together with its standard increasing filtration $F_\cbbullet\cD$ by the total order w.r.t.\,$\partial_x,\partial_y,\partial_{y'}$:
\begin{equation*}\label{eq:FpD}
F_p\cD=\sum_{|\alpha|+|\beta|+|\gamma|\leq p}\cO\partial_x^\alpha\partial_y^\beta\partial_{y'}^\gamma,
\end{equation*}
where we use the standard multi-index notation with $\alpha\in\NN^\ell$, etc. Similarly we will denote by $\cO[t']$ the ring of polynomials in $t'$ with coefficients in $\cO$ and by $\cD[t']\langle\partial_{t'}\rangle$ the corresponding ring of differential operators.

Consider the left $\cD$-modules
\[
\cO(*P_\red)=\cO[x^{-1}],\quad
\cO(*H)=\cO[y^{-1}],\quad
\cO(*D)=\cO[x^{-1},y^{-1}]
\]
with their standard left $\cD$-module structure. They are generated respectively by $1/\prod_{i=1}^\ell x_i$, $1/\prod_{j=1}^m y_j$ and $1/\prod_{i=1}^\ell x_i\prod_{j=1}^m y_j$ as $\cD$-modules. We will consider on these $\cD$-modules the increasing filtration $F_\bbullet$ defined as the action of $F_\bbullet\cD$ on the generator:
\begin{align*}
F_p\cO(*P_\red)&=\sum_{|\bma|\leq p}\cO\cdot\partial_x^{\bma}(1/\prod\nolimits_{i=1}^\ell x_i)=\sum_{|\bma|\leq p}\cO x^{-\bma-\bf1},\\
F_p\cO(*H)&=\sum_{|\bmc|\leq p}\cO\cdot\partial_y^{\bmc}(1/\prod\nolimits_{j=1}^m y_j)=\sum_{|\bmc|\leq p}\cO y^{-\bmc-\bf1},\\
F_p\cO(*D)&=\sum_{|\bma|+|\bmc|\leq p}\cO\cdot\partial_x^{\bma}\partial_y^{\bmc}(1/\prod\nolimits_{i=1}^\ell x_i\prod\nolimits_{j=1}^m y_j)=\sum_{|\bma|+|\bmc|\leq p}\cO x^{-\bma-\bf1}y^{-\bmc-\bf1},
\end{align*}
so that $F_p=0$ for $p<0$. These are the ``filtrations by the order of the pole'' in \cite[(3.12.1) p.\,80]{Deligne70}, taken in an increasing way. Regarding $\cO(*H)$ as a $\cD$-submodule of $\cO(*D)$, we have $F_p\cO(*H)=F_p\cO(*D)\cap\cO(*H)$ and similarly for $\cO(*P_\red)$. On the other hand it clearly follows from the formulas above that
\begin{equation*}\label{eq:FpPH}
F_p\cO(*D)=\sum_{q+q'\leq p}F_q\cO(*H)\cdot F_{q'}\cO(*P_\red),
\end{equation*}
where the product is taken in $\cO(*D)$.

\subsection{The $V$-filtration of the \texorpdfstring{$\cD_{X\times\Afu_v}$}{DXAv}-module \texorpdfstring{$\ccE=\ccE^{vf}(*\ccH)$}{EvfH}}\label{subsec:VfiltEv}
On $X\times\Afu_v$ we consider the holonomic $\cD_{X\times\Afu_v}$-module that we denote by $\ccE^{vf}(*\ccH)$. It is defined by the formula
\[
\ccE^{vf}(*\ccH):=\big(\cO_{X\times\Afu_v}(*(D\times\Afu_v)),\rd+\rd(vf)\big).
\]
For the sake of simplicity, we will set $\ccE=\ccE^{vf}(*\ccH)$. Then $\ccE$ has a global section, equal to~$1$, that we denote by $\revf$ on $X\times\Afu_v$. Similarly, we will consider the $v$-algebraic version of the same object, regarded as a $\cDXv$-module:
\[
E=E^{vf}(*H):=(\cO_X(*D)[v],\rd+\rd(vf))=\cO_X(*D)[v]\cdot\revf.
\]

It is standard that the $\cD_{X\times\Afu_v}$-module $\ccE$ is holonomic. However, it is not of exponential type as considered in \cite{E-S-Y13} since $vf$ is only a rational function, but is exponentially regular according to Proposition \ref{prop:Nexp}\eqref{prop:Nexp1}, hence it enters the frame considered in \S\ref{subsec:exptwisthol}. It is however known to have regular singularities along~$v=0$ (in~a~sense made precise in \cite{Bibi05b}) which has been thoroughly analyzed in \cite{Bibi96a}. On the other hand, it is easy to check that $F_0\cO_{X\times\Afu_v}(*\ccH)\revf$ generates $\ccE$ as a $\cD_{X\times\Afu_v}$-module.

Let us recall the definition of the $V$-filtration (considered increasingly) along $v=\nobreak0$ over $\Afu_v$. For each $\alpha\in[0,1)$ and $k\in\ZZ$, $V_{\alpha+k}\ccE$ is a coherent $\cD_{X\times\Afu_v/\Afu_v}$-module (by~\hbox{regularity}) equipped with an action of $v\partial_v$, and the minimal polynomial of $v\partial_v$ on $V_{\alpha+k}\ccE/V_{\alpha+k-1}\ccE$ has roots contained in $[-\alpha-k,-\alpha-k+1)$. We have by definition, for $k\geq1$,
\[
V_{\alpha-k}\ccE=v^k V_\alpha\ccE\quad\text{and}\quad V_{\alpha+k}\ccE=\begin{cases}
\sum_{j=1}^k\partial_v^{j-1} V_1\ccE&\text{if }\alpha=0,\\[5pt]
V_k\ccE+\partial_v^k V_\alpha\ccE&\text{if }\alpha\in(0,1).
\end{cases}
\]
Since $\ccE[v^{-1}]=j_{0_+}j_0^+\ccE$ is also holonomic, it also has a $V$-filtration. It satisfies, for any $k\in\ZZ$,
\[
V_{\alpha-k}(\ccE[v^{-1}])=v^k V_\alpha\ccE.
\]
There is also a notion of $V$-filtration for holonomic $\cDXv$-modules and we have $V_{\alpha+k}\ccE=\cO_{X\times\Afu_v}\otimes_{\cO_X[v]}V_{\alpha+k}E$.

\begin{lemme}[Description of $V_\bbullet E$]\label{lem:Vfiltration}
Let us fix $\beta\geq0$ and $\alpha\in[0,1)$.
\begin{enumerate}
\item\label{lem:Vfiltration1}
Near a point of $(X\moins P_\red)$, we have
\[
V_{\alpha+k}E=v^{\max(-k,0)}E\quad\text{for }k\in\ZZ.
\]

\item\label{lem:Vfiltration2}
Near a point of $P_\red$, in the local setting of \S\ref{subsec:setting}, we have
\[
V_\beta E=\sum_{\bma\geq0}\big(F_0\cO(*P_\red)\big)([\beta P])(*H)[v\partial_v]x^{-\bma}P_{\bma,\beta}(v\partial_v)\cdot\revf,
\]
with (convention: $\prod_{k\in\emptyset}\star_k=1$)
\[
P_{\bma,\beta}(s):=\prod_{i=1}^\ell\prod_{k=1}^{a_i}\Big(s+\frac{[\beta e_i]+k}{e_i}\Big).
\]
\end{enumerate}
\end{lemme}

\begin{proof}
The first point follows from the relation $\partial_v\revf=f\revf$. The second point is a reformulation of \cite[Lem.\,4.9]{Bibi96a}.
\end{proof}

For each $\bma\geq0$, let us set $I(\bma)=\{i\mid a_i=0\}\subset\{1,\dots,\ell\}$ and $x_{I(\bma)}=(x_i)_{i\in I(\bma)}$. For $\lambda\geq0$ we also set $P_{\bma,\lambda,\beta}(s)=(s+\beta)^\lambda P_{\bma,\beta}(s)$. Then near a point of $P_\red$, each local section of $V_\beta E$ has a \emph{unique} decomposition
\begin{equation}\label{eq:uniquevdv}
\sum_{\bma\geq0}\sum_{\lambda\geq0} h_{\bma,\lambda,\beta}(x_{I(\bma)},y,y^{-1},y')x^{-[\beta\bme]-{\bf1}}x^{-\bma}P_{\bma,\lambda,\beta}(v\partial_v)\revf,
\end{equation}
with $h_{\bma,\lambda,\beta}(x_{I(\bma)},y,y^{-1},y')\in\CC\{x_{I(\bma)},y,y'\}[y^{-1}]$. Moreover (\cf \loccit), a section \eqref{eq:uniquevdv} belongs to $V_{<\beta}E$ if and only if
\begin{equation}\label{eq:grV}
\forall\bma,\lambda\geq0,\quad h_{\bma,\lambda,\beta}\neq0\implique\begin{cases}
\lambda\geq\#\{i\mid\beta e_i\in\ZZ\}&\text{if }\beta>0,\\
\lambda\geq\ell+1&\text{if }\beta=0.
\end{cases}
\end{equation}
Let us make explicit the action of $\CC[v]$ on a section \eqref{eq:uniquevdv}. For $j\geq1$ we have
\[
v^j\cdot h_{\bma,\lambda,\beta}x^{-[\beta\bme]-{\bf1}}x^{-\bma}P_{\bma,\lambda,\beta}(v\partial_v)\revf=h_{\bma,\lambda,\beta}x^{-[\beta\bme]-{\bf1}}x^{-\bma+j\bme}P_{\bma,\lambda,\beta}(v\partial_v-j)v^j\partial_v^j\revf.
\]
Set $\bma-j\bme=\bma'-\bma''$, with $a'_i=\max(a_i-je_i,0)$, $a''_i=\max(je_i-a_i,0)$. The polynomial
\[
P_{\bma,\lambda,\beta}(s-j)=(s+\beta-j)^\lambda\prod_i\prod_{k=1}^{a_i}\big(s+([(\beta-j)e_i]+k)/e_i\big)
\]
is a multiple of $P_{\bma',0,\beta}(s)$ and there is a polynomial $R_{\bma,\lambda,j,\beta}(s)\in\QQ[s]$ such that
\[
P_{\bma,\lambda,\beta}(v\partial_v-j)v^j\partial_v^j=R_{\bma,\lambda,j,\beta}(v\partial_v)P_{\bma',0,\beta}(v\partial_v)=\sum_{\mu\geq0}c_\mu P_{\bma',\mu,\beta}(v\partial_v)
\]
with $c_\mu\in\QQ$. We thus obtain
\begin{multline}\label{eq:vjuniquevdv}
v^j\cdot h_{\bma,\lambda,\beta}x^{-[\beta\bme]-{\bf1}}x^{-\bma}P_{\bma,\lambda,\beta}(v\partial_v)\revf\\
=\sum_{\mu\geq0}c_\mu x^{\bma''}h_{\bma,\lambda,\beta}x^{-[\beta\bme]-{\bf1}}x^{-\bma'}P_{\bma',\mu,\beta}(v\partial_v)\revf,
\end{multline}
and since $I(\bma')=\{i\mid a_i-je_i\leq0\}\supset I(\bma)$, we obtain the result in the form of \eqref{eq:uniquevdv}.

\begin{lemme}\label{lem:PQ}
For any monic polynomial $P(s)$ of degree $p$, there exists a monic polynomial $Q(s)$ of degree $p$ such that $P(v\partial_v)\revf=Q(v/x^\bme)\revf$ in $\ccE$.\qed
\end{lemme}

Let us then denote $Q_{\bma,\lambda,\beta}(s)$ the polynomial associated with $P_{\bma,\lambda,\beta}(s)$ by Lem\-ma~\ref{lem:PQ}. We thus have
\begin{equation}\label{eq:Valphav}
V_\beta E=\sum_{\bma\geq0}\sum_{\lambda\geq0} \big(F_0\cO_X(*P_\red)\big)([\beta P])(*H)x^{-\bma}Q_{\bma,\lambda,\beta}(vf)\cdot\revf,
\end{equation}
and we note that $\deg Q_{\bma,\lambda,\beta}=|\bma|+\lambda$. Moreover, each local section has a unique decomposition
\begin{equation}\label{eq:uniquev}
\sum_{\bma\geq0}\sum_{\lambda\geq0} h_{\bma,\lambda,\beta}(x_{I(\bma)},y,y^{-1},y')x^{-[\beta\bme]-{\bf1}}x^{-\bma}Q_{\bma,\lambda,\beta}(vf)\revf.
\end{equation}

\begin{corollaire}\label{cor:Vfiltration}
Let us denote by $\gr^{[v]}V_\beta E$ the grading of $V_\beta E$ with respect to the degree in $v$. Then, in a neighbourhood of a point of $P_\red$, we have
\[
\gr_p^{[v]}V_\beta E\simeq\big(F_p\cO_X(*P_\red)\big)([(\beta+p)P])(*H)\cdot v^p.
\]
\end{corollaire}

\Subsection{The filtration \texorpdfstring{$F_{\alpha+\bbullet}E$}{FE}}\label{subsec:FalphaE}

Although the function $vf$ does not extend as a map $X\times\CC_v\to\PP^1$, we can nevertheless adapt in a natural way the definition given in \cite[(1.6.1) \& (1.6.2)]{E-S-Y13} for the case of the map $f:X\to\PP^1$.

\begin{definition}[The filtration]\label{def:Fexpressionv}
For $\alpha\in[0,1)$ we set, over $\Afu_v$,
\begin{align*}
F_{\alpha+p}E^{vf}&=\Big(\sum_{k\leq p}F_k\cO_X(*P_\red)([(\alpha+p)P])v^k\Big)[v]\cdot\revf,\\
F_{\alpha+p}E&=\sum_{q+q'\leq p}F_q\cO_X(*H)\cdot F_{\alpha+q'}E^{vf}.
\end{align*}
The analytification of these filtrations with respect to $v$ are denoted $F_{\alpha+p}\ccE^{vf}$ and $F_{\alpha+p}\ccE$ respectively.
\end{definition}

\begin{lemme}\label{lem:Fexpressionv}\mbox{}
For each $\alpha\in[0,1)$, the filtration $F_{\alpha+\bbullet}E$ is an $F_\bbullet\cDXv$-filtration which satisfies the following properties.
\begin{enumerate}
\item\label{lem:Fexpressionvcomp}
$F_{\alpha+p_1}E\subset F_{\beta+p_2}E$ for all $p_1,p_2\in\ZZ$ and $\beta\in[0,1)$ such that $\alpha+p_1\leq\beta+p_2$. Moreover, $F_{\alpha+p}E=0$ for $p<0$.
\item\label{lem:Fexpressionuv}
When restricted to $\GGm$, the filtration $F_{\alpha+p}\ccE$ is equal to $F_{\alpha+p}^\Del\ccE_{X\times\GGm}$ as defined in \cite[(1.6.2)]{E-S-Y13} for the map $vf:X\times\GGm\to\PP^1$, and for each $v_o\in\CC^*$, we have
\[
F_{\alpha+p}\ccE/(v-v_o)F_{\alpha+p}\ccE=F^\Del_{\alpha+p}\ccE^{v_of}(*H).
\]
\item\label{lem:Fexpressionvgood}
The filtration $F_{\alpha+\bbullet}E$ satisfies
\[
F_{\alpha+p}E=F_p\cD_{X\times\Afu_v}\cdot F_\alpha E\,;
\]
in particular, it is good with respect to $F_\bbullet\cDXv$.
\end{enumerate}
\end{lemme}

\begin{proof}
The exhaustivity is clear from the expression of Definition \ref{def:Fexpressionv}, and the first two points are straightforward. Let us check \eqref{lem:Fexpressionvgood}. It is enough to check it locally analytically on~$X$.

\subsubsection*{Near a point of $X\setminus P_\red$}
If $H=\emptyset$ and $p\geq0$, we have $F_{\alpha+p}E=\cO[v]\revf$ and the generation by $F_\alpha E$ is clear.

If $H=\{y_1\cdots y_m=0\}$ and $p>0$, we have
\[
F_{\alpha+p}E=\sum_{|\bma|\leq p}y^{-\bma-\bf1}\cO[v]\revf.
\]
Since $\partial_y^{\bma}(y^{-\bf1}\cO[v]\revf)=\star y^{-\bma-\bf1}\cO[v]\revf\bmod F_{\alpha+p-1}E$ if $|\bma|=p$, we get the generation by $F_\alpha E$ near such a point.

\subsubsection*{Near a point of $P_\red$}
From the equalities (for some nonzero constants~$\star$):
\begin{align*}
\partial_{x_i}\big(x^{-([\alpha\bme]+\bf1)}\cdot\revf\big)&=\star\,x_i^{-1}x^{-([\alpha\bme]+\bf1)}\cdot\revf+\star\,x_i^{-1}x^{-([(\alpha+1)\bme]+\bf1)}v\cdot\revf\\
\partial_v\big(x^{-([\alpha\bme]+\bf1)}\revf\big)&=x^{-([(\alpha+1)\bme]+\bf1)}\cdot\revf,
\end{align*}
we conclude
\[
F_1\cDXv\cdot F_\alpha E^{vf}=\big(F_0\cO(*P_\red)+F_1\cO(*P_\red)v\big)[v]\cdot x^{-[(\alpha+1)\bme]}\revf
\]
and by iterating the argument we get the generation property. The corresponding property for $F_{\alpha+\bbullet}E$ is proved similarly.
\end{proof}

We will rely on computations made in \cite{Bibi96a} and we will first express differently the filtration $F_{\alpha+p}E$. Let us define $G_pE$ as the filtration by $\cO_X$-modules (but not $\cO_X[v]$-modules), defined as
\[
G_pE=\bigoplus_{k=0}^p(F_{p-k}\cO_X(*H))(*P_\red) v^k\cdot\revf.
\]
The filtration $G_\bbullet E$ clearly satisfies
\begin{equation}\label{eq:Gp}
\begin{aligned}
p<0&\implique G_pE=0,\\
q\geq0&\implique G_pE\cap v^qE=v^q G_{p-q}E,\\
p-q<0&\implique G_pE\cap v^qE=0.
\end{aligned}
\end{equation}
For $\alpha\in[0,1)$ and $p\in\ZZ$, we set
\begin{equation}\label{eq:Falphap}
F'_{\alpha+p}E:=\sum_{k+j\leq p}(G_kE\cap V_{\alpha+j}E).
\end{equation}
Then $F'_{\alpha+p}E$ is an $\cO_X[v]$-module. Note also that $F'_{\alpha+\bbullet}E$ is an $F_\bbullet\cDXv$-filtration. Indeed, $G_kE$ is stable by $\partial_{y'},\partial_v$, and we have $\partial_{x_i},\partial_{y_j}G_kE\subset G_{k+1}E$; moreover, $V_{\alpha+j}E$ is stable by $\partial_x,\partial_y,\partial_{y'}$, and we have $\partial_v V_{\alpha+j}E\subset V_{\alpha+j+1}E$.

Recall that, for $j\geq0$, we have by definition $V_{\alpha-j}E=v^jV_\alpha E$, so that for $k\geq0$,
\begin{equation}\label{eq:multvjGV}
v^j:G_kE\cap V_\alpha E\isom G_{k+j}E\cap\nobreak V_{\alpha-j}E.
\end{equation}
Therefore, we can also write
\begin{equation}\label{eq:Fprimealphapv}
F'_{\alpha+p}E
:=\CC[v](G_pE\cap V_\alpha E)+\sum_{j=1}^p(G_{p-j}E\cap V_{\alpha+j}E).
\end{equation}
It follows that $F'_{\alpha+p}E=0$ for $p<0$ (and $\alpha\in[0,1)$).

\begin{lemme}\label{lem:FF'v}
For each $\alpha\in[0,1)$ and $p\in\ZZ$ we have
\[
F'_{\alpha+p}E=F_{\alpha+p}E.
\]
\end{lemme}

\begin{proof}
Let us first consider an analytic neighbourhood of a point of $X\setminus P_\red$. Due to the relation $\partial_v\revf=f\revf$, we have, near such a point, $V_{\alpha+j}E=v^{\max(-j,0)}E$ for any $j\in\ZZ$, and $G_pE=\bigoplus_{k=0}^pF_k\cO(*H)v^{p-k}\revf$. Then, near such a point, \eqref{eq:Fprimealphapv} reads
\begin{align*}
F'_{\alpha+p}E&:=\CC[v](G_pE\cap V_0E)\\
&=\CC[v]G_pE=F_p\cO(*H)[v]\revf=F_{\alpha+p}E.
\end{align*}

We now argue locally at a point of $P_\red$. We refine \eqref{eq:uniquevdv} in order to take into account the pole order along $H$, so a section of $V_\beta E$ can be written in a unique way~as
\begin{equation}\label{eq:uniquevdvH}
\sum_{\bma\geq0}\sum_{\lambda\geq0}\sum_{\bmc\geq0} h_{\bma,\bmc,\lambda,\beta}(x_{I(\bma)},y_{J(\bmc)},y')x^{-[\beta\bme]-{\bf1}}x^{-\bma}y^{-\bmc-{\bf1}}P_{\bma,\lambda,\beta}(v\partial_v)\revf,
\end{equation}
with $J(\bmc)=\{j\mid c_j=0\}$ and $h_{\bma,\bmc,\lambda,\beta}(x_{I(\bma)},y_{J(\bmc)},y')\in\CC\{x_{I(\bma)},y,y'\}$. Arguing as in the proof of \cite[Lemma 4.11]{Bibi96a}, we obtain that, for $\beta\geq0$, a section \eqref{eq:uniquevdvH} belongs to $G_kE\cap V_\beta E$ if and only if the coefficients $h_{\bma,\bmc,\lambda,\beta}$ are zero whenever $\deg P_{\bma,\lambda,\beta}+|\bmc|=|\bma|+|\bmc|+\lambda$ is $>k$ (note that this condition clearly defines an increasing filtration with respect to~$k$).

We will first show that $F_\alpha E=F'_\alpha E$ for $\alpha\in[0,1)$. We have
\begin{align*}
F'_\alpha E&=\CC[v](G_0E\cap V_\alpha E)\\
\tag*{and} F_\alpha E&=F_0\cO(*D)([\alpha P])[v]\revf.
\end{align*}
Here we are considering the case $k=0$ and $\beta=\alpha$. Let us consider a section \eqref{eq:uniquevdvH} in $G_0E\cap V_\alpha E$. The only possible term occurs for $\bma=0$, $\lambda=0$ and $\bmc=0$, so $G_0E\cap V_\alpha E=F_0\cO(*D)([\alpha P])\revf$. Therefore,
\begin{equation}\label{eq:FalphaValpha}
F_\alpha E:=F_0\cO(*D)([\alpha P])[v]\revf=\CC[v](G_0E\cap V_\alpha E)=F'_\alpha E.
\end{equation}
Since $F_{\alpha+p}E=F_p\cD[v]\langle\partial_v\rangle\cdot F_\alpha E$ (Lemma \ref{lem:Fexpressionv}\eqref{lem:Fexpressionvgood}), and since $F'_{\alpha+p}E$ is an $F_\bbullet\cDXv$-filtration, it follows that $F_{\alpha+p}E\subset F'_{\alpha+p}E$ for all $p$.

Let us now show the reverse inclusion $F'_{\alpha+p}E\subset F_{\alpha+p}E$. Let us consider a term in $G_kE\cap V_{\alpha+p-k}E$ ($0\leq k\leq p$) of the form
\[
h(x_{I_\beta(\bma)},y_{J(\bmc)},y')x^{-[\beta\bme]-{\bf1}}x^{-\bma}y^{-\bmc-{\bf1}}P_{\bma,\lambda,\beta}(v\partial_v)\revf,
\]
with $\beta=\alpha+p-k$, $\bma\geq0$, $\lambda+|\bma|+|\bmc|\leq k$. Let us rewrite $P_{\bma,\lambda,\beta}(v\partial_v)$ in terms of the monomials $v^j\partial_v^j$. For $j\leq \lambda+|\bma|\leq k-|\bmc|$, the result of the action of $v^j\partial_v^j$ on $h(x_{I_\beta(\bma)},y_{J(\bmc)},y')x^{-[\beta\bme]-{\bf1}}x^{-\bma}y^{-\bmc-{\bf1}}\revf$ is
\begin{multline*}
h(x_{I_\beta(\bma)},y_{J(\bmc)},y')x^{-[(\beta+j)\bme]-{\bf1}}x^{-\bma}y^{-\bmc-{\bf1}}v^j\revf\\
=h(x_{I_\beta(\bma)},y_{J(\bmc)},y')x^{-[(\alpha+p)\bme]-{\bf1}}x^{-\bma+(k-j)\bme}y^{-\bmc-{\bf1}}v^j\revf.
\end{multline*}
For $\bma'\in\ZZ^\ell$, let us set $|\bma'|_+=\sum_i\max(0,|a'_i|)$. Since $|\bma|_+=|\bma|\leq k$, the reverse inclusion follows from the lemma below.
\end{proof}

\begin{lemme}\label{lem:precision}
For $\bma'\in\ZZ^\ell$ and $k\geq0$, assume that $|\bma'|_+\leq k$. Then for $j$ such that $0\leq j\leq k$, we have $|\bma'-(k-j)\bme|_+\leq j$.
\end{lemme}

\begin{proof}
We argue by decreasing induction on $j$ and the result is true if $j=k$ by assumption. We~are reduced to proving that, if $|\bma'|_+\geq1$, then $|\bma'-\bme|_+\leq|\bma'|_+-1$. There exists~$i_o$ such that $a'_{i_o}\geq1$, so $\max(a'_{i_o},0)=a'_{i_o}\geq1$ and $\max(a'_{i_o}-e'_{i_o},0)\leq a'_{i_o}-1$. Since $\max(a'_i-e'_i,0)\leq \max(a'_i,0)$ for any $i$, we get $|\bma'-\bme|_+\leq |\bma'|_+-1$, as wanted.
\end{proof}

\subsection{Filtration on the nearby cycles of \texorpdfstring{$\ccE$}{E} along \texorpdfstring{$v=0$}{v0}}\label{subsec:psivE}

In this subsection, we analyze the filtration induced by $F_{\alpha+\bbullet}\ccE$ on the nearby cycles of $\ccE$ along $v=0$. Our objective is to show that M.\,Saito's criterion \cite[Prop.\,3.3.17]{MSaito86} applies.

\begin{proposition}\label{prop:FMHS}\mbox{}
\begin{enumerate}
\item\label{prop:FMHS3}
For each $\alpha\in[0,1)$, the filtered module $(\ccE^{vf}(*\ccH),F_{\alpha+\bbullet}\ccE^{vf}(*\ccH))$ is strictly specializable and regular along $v=0$, in the sense of \cite[(3.2.1)]{MSaito86}.
\item\label{prop:FMHS4}
Let $V_\bbullet\ccE^{vf}(*\ccH)$ be the $V$-filtration of $\ccE^{vf}(*\ccH)$ along $v=0$ and, for each $\alpha\in [0,1)$, let us set
\[
\psi_{v,\exp(-2\pi\ri\alpha)}\ccE^{vf}(*\ccH):=\gr_\alpha^V\ccE^{vf}(*\ccH)=V_\alpha\ccE^{vf}(*\ccH)/V_{<\alpha}\ccE^{vf}(*\ccH).
\]
For each $\alpha\in[0,1)$ the jumps of the induced filtration (considered as a filtration indexed by $\QQ$)
\[
F_\bbullet\psi_{v,\exp(-2\pi\ri\alpha)}\ccE^{vf}(*\ccH):=\frac{F_\bbullet\cap V_\alpha\ccE^{vf}(*\ccH)}{F_\bbullet\cap V_{<\alpha}\ccE^{vf}(*\ccH)}
\]
occur at $\alpha+\ZZ$ at most, and the filtration
\[
F_p\psi_v\ccE^{vf}(*\ccH):=\bigoplus_{\alpha\in[0,1)}F_{\alpha+p}\psi_{v,\exp(-2\pi\ri\alpha)}\ccE^{vf}(*\ccH)
\]
is (up to a shift by $\dim X-1$ on $\psi_{v,\neq1}\ccE^{vf}$ and by $\dim X$ on $\psi_{v,1}\ccE^{vf}$) the Hodge filtration of a mixed Hodge module.
\item\label{prop:FMHS5}
If moreover $H=\emptyset$, this mixed Hodge module is polarized by the nilpotent part of the monodromy naturally acting on $\psi_v\ccE^{vf}$, induced by the action of $\exp-2\pi\ri v\partial_v$.
\end{enumerate}
\end{proposition}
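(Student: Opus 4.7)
The plan is to verify M.\,Saito's strict specializability conditions \cite[(3.2.1)]{MSaito86} using the explicit descriptions of $V_\bbullet E$ (Lemma~\ref{lem:Vfiltration}) and of $F_{\alpha+\bbullet}E$ provided by Lemma~\ref{lem:FF'v}, and then to identify the resulting filtered $\cD_X$-module $\gr^V\ccE^{vf}(*\ccH)$ with the underlying filtered $\cD_X$-module of a mixed Hodge module built from $(\cO_X(*H),F_\bbullet)$ by taking nearby/vanishing cycles along the local monomial $g=x^{\bme}$ defining $P_\red$. All three statements are local on $X$ and trivial away from $P_\red$ since there $V_\bbullet\ccE$ is constant by Lemma~\ref{lem:Vfiltration}\eqref{lem:Vfiltration1}.

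For~\eqref{prop:FMHS3}, strictness and compatibility of $v$ with $F_{\alpha+\bbullet}$ follow from combining the identity $F_{\alpha+p}E=F'_{\alpha+p}E$ of Lemma~\ref{lem:FF'v} with the isomorphism $v^j\colon G_kE\cap V_\alpha E\isom G_{k+j}E\cap V_{\alpha-j}E$ in~\eqref{eq:multvjGV}: applied termwise to~\eqref{eq:Fprimealphapv}, this produces the required bijectivities on the graded pieces of $F_{\alpha+\bbullet}$ in the relevant range of the $V$-index. The parallel statement for $\partial_v$ is proved analogously by tracking how $\partial_v$ shifts $G_\bbullet$ (by $+1$ each time it hits an $x$- or $y$-variable) and $V_\bbullet$ (since $\partial_vV_\beta\subset V_{\beta+1}$), using the unique decomposition~\eqref{eq:uniquevdv} and the criterion~\eqref{eq:grV} for membership in $V_{<\beta}$. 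Regularity in the sense of~\cite[(3.2.1)]{MSaito86} is immediate from the bounded nilpotent level $\lambda$ in~\eqref{eq:uniquevdv}.

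For~\eqref{prop:FMHS4}, the jump statement at $\alpha+\ZZ$ is a direct consequence of the strict specializability just established. For the Hodge-theoretic identification, I would use the refined decomposition~\eqref{eq:uniquevdvH} and the vanishing criterion~\eqref{eq:grV} to exhibit, for each $\alpha\in[0,1)$, the graded object $\gr^V_\alpha\ccE^{vf}(*\ccH)$ as a finite direct sum, indexed by subsets $I\subset\{i\mid\alpha e_i\in\ZZ\}$ (or $I\subset\{1,\dots,\ell\}$ if $\alpha=0$) and by nilpotent levels $\lambda<\#I$ (resp.~$\lambda\leq\ell$), of filtered $\cD_X$\nobreakdash-modules of the form $i_{P_I,+}(\cO_{P_I}(*(H\cap P_I)),F_\bbullet)$ with appropriate Tate twists, where $P_I=\bigcap_{i\in I}P_i$. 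Under this identification, Lemma~\ref{lem:FF'v} translates the filtration $F_{\alpha+\bbullet}\gr^V_\alpha\ccE$ into the pole-order Hodge filtration on each summand; since each such summand underlies a mixed Hodge module by~\cite{Deligne70,MSaito87}, this proves~\eqref{prop:FMHS4}. For~\eqref{prop:FMHS5}, when $H=\emptyset$ each summand becomes $i_{P_I,+}(\cO_{P_I},F_\bbullet)$, the filtered $\cD_X$\nobreakdash-module underlying a pure polarized Hodge module; the polarization of $\psi_v\ccE^{vf}$ is then transported from the Steenbrink--Saito polarization of nearby cycles along $g$ of the pure polarized Hodge module $(\cO_X,F_\bbullet)$, with nilpotent operator $N=-2\pi\ri\,(v\partial_v+\alpha)$.

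The hardest step will be the identification in~\eqref{prop:FMHS4}: matching $\gr^V_\alpha\ccE^{vf}(*\ccH)$ as a filtered $\cD_X$-module with the above concrete sum of mixed Hodge modules on the strata of $P_\red$, while simultaneously respecting the pole-order structure encoded by $G_\bbullet E$ and the nilpotent structure encoded by the level $\lambda$, and keeping precise track of the Tate twists (shifting by $\dim X-1$ on $\psi_{v,\neq1}$ and by $\dim X$ on $\psi_{v,1}$) that reflect the role of the rescaling variable~$v$.
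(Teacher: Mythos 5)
Your treatment of part \eqref{prop:FMHS3} follows essentially the same route as the paper: the identity $F_{\alpha+p}E\cap V_{\beta-\ell}E=v^\ell(F_{\alpha+p}E\cap V_\beta E)$ does come out of Lemma~\ref{lem:FF'v}, \eqref{eq:Fprimealphapv} and \eqref{eq:multvjGV}, and the condition on $\partial_v$ is checked on the explicit decomposition \eqref{eq:uniquevdvH}. Two imprecisions, though. First, ``regular'' in \cite[(3.2.1)]{MSaito86} means that $F_{\alpha+\bbullet}\gr^V_\beta E$ is a \emph{good} filtration; this is not a consequence of the boundedness of the nilpotency level $\lambda$, and the paper reduces it to the goodness of $G_\bbullet\gr^V_\beta E$, which is \cite[4.14]{Bibi96a}. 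Second, the assertion that the jumps of the induced filtration on $\gr^V_\alpha$ lie in $\alpha+\ZZ$ is not a formal consequence of strict specializability: it requires the computation \eqref{eq:FcapV}--\eqref{eq:FgrV}, which shows that $F_{\gamma+\bbullet}$ induces $G_\bbullet\gr^V_\beta E$ when $\beta\leq\gamma$ and $G_{\bbullet-1}\gr^V_\beta E$ when $\beta>\gamma$.

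The genuine gap is in parts \eqref{prop:FMHS4}--\eqref{prop:FMHS5}. You propose to exhibit $\gr^V_\alpha\ccE^{vf}(*\ccH)$ as a \emph{direct sum} of filtered modules $i_{P_I,+}(\cO_{P_I}(*(H\cap P_I)),F_\bbullet)$ with Tate twists. No such splitting exists in the category of (filtered) $\cD_X$-modules: the unique decomposition \eqref{eq:uniquevdvH} is only an $\cO$-module splitting, the $\cD_X$-action mixes its pieces (cf.\ Lemma~\ref{lem:FValphav}\eqref{lem:FValphav2}), and the nilpotent operator $v\partial_v+\alpha$ is nonzero on $\gr^V_\alpha$ (it raises $\lambda$, cf.\ \eqref{eq:grV}), so that $\gr^V_\alpha\ccE$ is a nontrivial iterated extension, exactly as $\psi_{g}\cO_X$ is for a normal crossing monomial $g$; only the monodromy-graded object decomposes over the strata of $P_\red$. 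Consequently the inference ``each summand underlies a mixed Hodge module, hence so does the whole'' does not go through, and even granting a filtration with such graded pieces one would still have to produce the weight filtration as the relative monodromy filtration of $N$ and check the polarization on primitive parts --- which is the actual content of the statement. The paper sidesteps all of this: having shown via \eqref{eq:FgrV} that the filtration induced by $F_{\alpha+\bbullet}$ on $\gr^V_\beta E$ is the $G$-filtration (up to shift), it invokes \cite[Th.\,4.3]{Bibi96a}, which already identifies $(\gr^V_\bbullet E,G_\bbullet)$ with the (shifted) Hodge filtration of the relevant nearby-cycle mixed Hodge module together with its polarization. To make your argument work you would either have to carry out that identification in full, including the extension data and the weight filtration, or cite the result.
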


The latter statement means that the weight filtration of the corresponding mixed Hodge module is, up to a shift which depends on whether $\alpha=0$ or $\alpha\neq0$, the monodromy filtration of the nilpotent endomorphism induced by $v\partial_v+\alpha$ on $\psi_{v,\exp(-2\pi\ri\alpha)}\ccE^{vf}$.

\begin{proof}[Proof of Proposition \ref{prop:FMHS}\eqref{prop:FMHS4} and \eqref{prop:FMHS5}]
It is enough to work in the algebraic setting with respect to $v$. Recall that we set $E=E^{vf}(*H)$ for short and that $F'$ was defined by \eqref{eq:Falphap}. Let $\beta\in\nobreak[0,1)$. We claim that
\begin{equation}\label{eq:FcapV}
F'_{\alpha+p}E\cap V_\beta E=\begin{cases}
\CC[v](G_pE\cap V_\alpha E)+(G_{p-1}E\cap V_\beta E)&\text{if }\beta>\alpha,\\[5pt]
v\CC[v](G_pE\cap V_\alpha E)+(G_pE\cap V_\beta E)&\text{if }\beta\leq\alpha.
\end{cases}
\end{equation}
This implies that
\begin{equation}\label{eq:FgrV}
F'_{\alpha+p}\gr_\beta^VE=\begin{cases}
G_{p-1}\gr_\beta^VE=F'_{<\alpha+p}\gr_\beta^VE&\text{if }\beta>\alpha,\\[5pt]
G_p\gr_\beta^VE=F'_{\beta+p}\gr_\beta^VE&\text{if }\beta\leq\alpha.
\end{cases}
\end{equation}

Let us prove \eqref{eq:FcapV}. We have $F'_{\alpha+p}E\subset V_{\alpha+p}E$ and
\begin{align*}
F'_{\alpha+p}E\cap V_{\alpha+p-1}E&=\CC[v](G_pE\cap V_\alpha E)+\sum_{\ell=1}^{p-1}(G_{p-\ell}E\cap V_{\alpha+\ell}E)+(G_0E\cap V_{\alpha+p-1}E)\\
&=\CC[v](G_pE\cap V_\alpha E)+\sum_{\ell=1}^{p-1}(G_{p-\ell}E\cap V_{\alpha+\ell}E),
\end{align*}
and by decreasing induction one eventually finds
\[
F'_{\alpha+p}E\cap V_{\alpha+1}E=\CC[v](G_pE\cap V_\alpha E)+(G_{p-1}E\cap V_{\alpha+1}E).
\]
Intersecting now with $V_\beta E$ gives the first line of \eqref{eq:FcapV}. The second one is obtained by showing in the same way
\[
F'_{\alpha+p}E\cap V_\alpha E=\CC[v](G_pE\cap V_\alpha E)=v\CC[v](G_pE\cap V_\alpha E)+(G_pE\cap V_\alpha E).
\]

Lemma \ref{lem:FF'v}, together with \eqref{eq:FgrV} and \cite[Th.\,4.3]{Bibi96a}, proves \ref{prop:FMHS}\eqref{prop:FMHS4} and~\eqref{prop:FMHS5}.
\end{proof}

\begin{proof}[Proof of \ref{prop:FMHS}\eqref{prop:FMHS3}]
Continuing the proof of \eqref{eq:FcapV} gives, for $\beta\in[0,1)$ and $\ell\geq1$,
\[
F'_{\alpha+p}E\cap V_{\beta-\ell}E=\begin{cases}
v^\ell\CC[v](G_pE\cap V_\alpha E)+(G_{p+\ell-1}E\cap V_{\beta-\ell}E)&\text{if }\beta>\alpha,\\[5pt]
v^{\ell+1}\CC[v](G_pE\cap V_\alpha E)+(G_{p+\ell}E\cap V_{\beta-\ell}E)&\text{if }\beta\leq\alpha,
\end{cases}
\]
which amounts to
\[
F'_{\alpha+p}E\cap V_{\beta-\ell}E=\begin{cases}
v^\ell\Big[\CC[v](G_pE\cap V_\alpha E)+(G_{p-1}E\cap V_\beta E)\Big]&\text{if }\beta>\alpha,\\[8pt]
v^\ell\Big[v\CC[v](G_pE\cap V_\alpha E)+(G_pE\cap V_\beta E)\Big]&\text{if }\beta\leq\alpha,
\end{cases}
\]
that is, in any case,
\[
F'_{\alpha+p}E\cap V_{\beta-\ell}E=v^\ell(F'_{\alpha+p}E\cap V_\beta E),
\]
which is \cite[(3.2.1.2)]{MSaito86} (up to changing the convention for the $V$-filtration), since~$v$ acts in an injective way on $V_\beta E$.

We now wish to prove that Property (3.2.1.3) of \cite{MSaito86} holds, that is, for each $\beta>0$ and for each $\alpha\in[0,1)$ and $p\in\ZZ$, the morphism
\[
\partial_v:F'_{\alpha+p}\gr_\beta^VE\to F'_{\alpha+p+1}\gr_{\beta+1}^VE
\]
is an isomorphism. Assume first that there exists $p_o\geq0$ such that $\alpha+p_o\leq\beta<\alpha+p_o+1$ (otherwise, $0<\beta<\alpha$, a case which will be treated separately). Then a computation similar to that for proving \eqref{eq:FcapV} gives
\[
F'_{\alpha+p}E\cap V_\beta E=\begin{cases}\dpl
\CC[v](G_pE\cap V_\alpha E)+\sum_{\ell=1}^{p_o}(G_{p-\ell}E\cap V_{\alpha+\ell}E)+(G_{p-p_o-1}E\cap V_\beta E)\\[-5pt]
\hspace*{4.5cm}\text{if }\alpha+p_o<\beta<\alpha+p_o+1,\\[5pt]
\dpl\CC[v](G_pE\cap V_\alpha E)+\sum_{\ell=1}^{p_o}(G_{p-\ell}E\cap V_{\alpha+\ell}E)\quad\text{if }\beta=\alpha+p_o.
\end{cases}
\]
As a consequence, we find
\[
F'_{\alpha+p}\gr^V_\beta E=\begin{cases}
G_{p-p_o-1}\gr^V_\beta E&\text{if }\alpha+p_o<\beta<\alpha+p_o+1,\\[5pt]
G_{p-p_o}\gr^V_\beta E&\text{if }\beta=\alpha+p_o.
\end{cases}
\]
If $0<\beta<\alpha$, we also get $F'_{\alpha+p}\gr^V_\beta E=G_p\gr^V_\beta E$. So we are reduced to proving that, for any $\beta>0$ and any $k$, the morphism
\begin{equation}\label{eq:dvGpgr}
\partial_v:G_k\gr^V_\beta E\to G_k\gr^V_{\beta+1}E
\end{equation}
is an isomorphism.

Away from $P_\red$, we have $\gr_\beta^VE=0$ for each $\beta>0$, so the assertion is empty. Let us prove the assertion in the neighbourhood of a point of $P_\red$. The left action of $\partial_v$ on a term of the sum \eqref{eq:uniquevdvH} gives, since $\partial_v\revf=x^{-\bme}\revf$ and due to standard commutation rules,
\[
P_{\bma,\lambda,\beta}(v\partial_v+1)h_{\bma,\bmc,\lambda,\beta}(x_{I(\bma)},y_{J(\bmc)},y')x^{-[(\beta+1)\bme]-\bf1}x^{-\bma}y^{-\bmc-\bf1}\revf.
\]
We have $P_{\bma,\lambda,\beta}(v\partial_v+1)=P_{\bma,\lambda,\beta+1}(v\partial_v)$ and we use \eqref{eq:grV} for $\beta>0$ to conclude that \eqref{eq:dvGpgr} is an isomorphism.

It remains to be checked that $F'_{\alpha+\bbullet}\gr_\beta^VE$ is a good filtration for any $\beta\in\RR$. The previous arguments reduces us to checking this for $\beta\in[0,1]$, and we are reduced to proving that, for any such $\beta$, $G_\bbullet\gr_\beta^VE$ is a good filtration. This follows from \cite[4.14]{Bibi96a}, since this filtration is identified (after grading by a finite filtration) to a filtration which is already known to be good (and which is the Hodge filtration of a mixed Hodge module).
\end{proof}

\subsection{Computation of \texorpdfstring{$F_{\alpha+\bbullet}E\cap V_\alpha E$}{FaVa}}\label{subsec:computFV}

The previous section shows that, for $\alpha,\beta\in[0,1)$, the computation of $F_{\beta+\bbullet}E\cap V_\alpha E$ is interesting mainly when $\beta=\alpha$. Note that $F_{\alpha+p}E=0$ for $p<0$ and that, away from $P_\red$, we have $F_{\alpha+p}E^{vf}\cap\nobreak V_\alpha E^{vf}=E^{vf}=\cO_X\revf$.

\begin{lemme}\label{lem:grFalphaVE}
For $\alpha\in[0,1)$ and $p\geq0$, we have
\begin{starequation}\label{eq:grFalphaVE}
\begin{split}
\gr_p^{F_\alpha}V_\alpha E&=\bigoplus_{k\geq0}\Big(\gr_{p+k}^GV_{\alpha-k}E/\gr_{p+k}^GV_{\alpha-k-1}E\Big)\\
&\simeq\CC[v]\otimes_\CC\Big(\gr_p^GV_\alpha E/\gr_p^GV_{\alpha-1}E\Big).
\end{split}
\end{starequation}%
\end{lemme}

Note that, for $p=0$, we have $\gr_0^GV_\alpha E=G_0E\cap V_\alpha E$.

\begin{proof}
On the one hand, the natural map $G_pE\cap V_\alpha E\to\gr_p^{F_\alpha}V_\alpha E$ has kernel equal to $G_pE\cap V_\alpha E\cap\big(\sum_{k\geq0}(G_{p-1+k}E\cap V_{\alpha-k}E)\big)$, according to \eqref{eq:Falphap}. The latter space is contained in $G_pE\cap (G_{p-1}E\cap V_\alpha E+V_{\alpha -1}E)$, that is, in \hbox{$(G_{p-1}E\cap V_\alpha E)+(G_pE\cap V_{\alpha -1}E)$}; but clearly the converse inclusion is also true. We thus have an inclusion
\begin{equation}\label{eq:injGVgrFV}
\frac{G_pE\cap V_\alpha E}{(G_{p-1}E\cap V_\alpha E)+(G_pE\cap V_{\alpha-1}E)}\hto \gr_p^{F_\alpha}V_\alpha E.
\end{equation}
On the other hand,
\[
G_pE\cap V_\alpha E\cap\Big(\sum_{k\geq1}G_{p+k}E\cap V_{\alpha-k}E\Big)\subset G_pE\cap V_{\alpha-1}E\subset F_{\alpha+p-1}E,
\]
so \eqref{eq:injGVgrFV} is a direct summand in $\gr_p^{F_\alpha}V_\alpha E$, and one can continue to get the first expression in \eqref{eq:grFalphaVE}. For the second expression, we use \eqref{eq:multvjGV}.
\end{proof}

\begin{lemme}\label{lem:FValphav}
For $\alpha\in[0,1)$, $p\geq0$, near a point of $P_\red$, the following holds:
\begin{enumerate}
\item\label{lem:FValphav1}
The natural morphism, induced by the inclusion of each summand in $\cO_X(*P_\red)[v]$:
\begin{starequation}\label{eq:FValphav}
\bigoplus_{\substack{\bma\geq0\\|\bma|\leq p}}\cO_{I(\bma)}x^{-[\alpha\bme]-\bf1}x^{-\bma}Q_{\bma,p-|\bma|,\alpha}(v/x^\bme)\to \gr_p^GV_\alpha E^{vf}
\end{starequation}%
is an isomorphism.

\item\label{lem:FValphav2}
For each $i=1,\dots,\ell$, the morphism $\partial_i:\gr_p^GV_\alpha E^{vf}\to\gr_{p+1}^GV_\alpha E^{vf}$ induced by $-\partial_{x_i}/e_i$ is given by
\bgroup\multlinegap0pt
\begin{multline*}
\partial_i h_{\bma,p-|\bma|,\alpha}(x_{I(\bma)},y')x^{-[\alpha\bme]-\bf1}x^{-\bma}Q_{\bma,p-|\bma|,\alpha}(v/x^\bme)\\
=
\begin{cases}
h_{\bma,p-|\bma|,\alpha}(x_{I(\bma)},y')x^{-[\alpha\bme]-\bf1}x^{-(\bma+1_i)}Q_{\bma+1_i,p+1-|\bma+1_i|,\alpha}(v/x^\bme)&\text{if }i\notin I(\bma),\\[5pt]
h_{\bma,p-|\bma|,\alpha}(0_i,y')x^{-[\alpha\bme]-\bf1}x^{-(\bma+1_i)}Q_{\bma+1_i,p+1-|\bma+1_i|,\alpha}(v/x^\bme)\\
\hspace*{1.5cm}{}+h^{(i)}_{\bma,p-|\bma|,\alpha}(x_{I(\bma)},y')x^{-[\alpha\bme]-\bf1}x^{-\bma}Q_{\bma,p+1-|\bma|,\alpha}(v/x^\bme)&\text{if }i\in I(\bma),
\end{cases}
\end{multline*}
\egroup
where, for $i\in I(\bma)$, we set
\[
h_{\bma,p-|\bma|,\alpha}(x_{I(\bma)},y')=h_{\bma,p-|\bma|,\alpha}(0_i,y')+x_ih^{(i)}_{\bma,p-|\bma|,\alpha}(x_{I(\bma)},y'),
\]
and $0_i$ means that $x_i$ is set to be $0$ in $x_{I(\bma)}$.
\end{enumerate}
\end{lemme}

\begin{proof}
The first point follows from \cite[Lemma 4.11]{Bibi96a}. For the second point we have, modulo $G_pE^{vf}\cap\nobreak V_\alpha E^{vf}$,
\begin{multline*}
\partial_i h_{\bma,p-|\bma|,\alpha}(x_{I(\bma)},y')x^{-[\alpha\bme]-\bf1}x^{-\bma}Q_{\bma,p-|\bma|,\alpha}(v/x^\bme)\\
=h_{\bma,p-|\bma|,\alpha}(x_{I(\bma)},y')x^{-[\alpha\bme]-\bf1}x^{-(\bma+1_i)}Q_{\bma+1_i,p-|\bma|,\alpha}(v/x^\bme).
\end{multline*}
However, this is possibly not written in the form above if $i\in I(\bma)$ (\ie $a_i=0$) and we modify this expression as indicated in the statement to obtain, modulo $G_pE^{vf}\cap\nobreak V_\alpha E^{vf}$, the desired formula.
\end{proof}

\Subsection{The filtered relative de~Rham complex and the~rescaled Yu complex}\label{subsec:deRhamYu}
We consider the relative de~Rham complex $\DR_{X\times\Afu_v/\Afu_v}\ccE$ which is nothing but the complex of $\cO_{\Afu_v}$-modules
\[
0\to\ccE\To{\rd+v\rd f}\Omega^1_X[v]\otimes\ccE\to\cdots
\]
and the action of $\partial_v$ by $\partial/\partial v+f$ induces a $\CC[v]\langle\partial_v\rangle$-structure on each term compatible with the differentials.

We filter this complex as usual by subcomplexes of $\CC[v]$-modules:
\[
F_{\alpha+p}\DR_{X\times\Afu_v/\Afu_v}\ccE=\{F_{\alpha+p}\ccE\To{\rd+v\rd f}\Omega^1_X[v]\otimes F_{\alpha+p+1}\ccE\to\cdots\}.
\]
As usual we set $F_\alpha^p=F_{\alpha-p}$. The action of $\partial_v$ on $\DR_{X\times\Afu_v/\Afu_v}\ccE$ induces a morphism
\[
\partial_v:F_{\alpha+p}\DR_{X\times\Afu_v/\Afu_v}\ccE\to F_{\alpha+p+1}\DR_{X\times\Afu_v/\Afu_v}\ccE.
\]
We will use the following notation, as in \cite{E-S-Y13}:
\[
\Omega^k_{X\times\Afu_v/\Afu_v}(\log\ccD)([(\alpha+j)\ccP])_+=\begin{cases}
0&\text{if }j<0,\\
\Omega^k_{X\times\Afu_v/\Afu_v}(\log\ccD)([(\alpha+j)\ccP])&\text{if }j\geq0.
\end{cases}
\]
We define the \emph{rescaled Yu complex} as being the filtered complex ($\alpha\in[0,1)$ and $p\in\ZZ$):
\bgroup
\multlinegap0pt
\begin{multline*}
F_{\alpha+p}^\Yu\DR_{X\times\Afu_v/\Afu_v}\ccE\\
:=\cO_{X\times\Afu_v}([(\alpha+p)\ccP])_+\To{\rd+v\rd f}\Omega^1_{X\times\Afu_v/\Afu_v}(\log\ccD)([(\alpha+p+1)\ccP])_+\to\cdots
\end{multline*}
\egroup
which is a complex of $\cO_{\Afu_v}$-modules. The connection $\partial/\partial v+f$ induces a morphism $\nabla_{\partial_v}:F_{\alpha+p}^\Yu\DR_{X\times\Afu_v/\Afu_v}\ccE\to F_{\alpha+p+1}^\Yu\DR_{X\times\Afu_v/\Afu_v}\ccE$.

\begin{proposition}\label{prop:FFYu}
The natural morphism
\[
F_{\alpha+p}^\Yu\DR_{X\times\Afu_v/\Afu_v}\ccE\to F_{\alpha+p}\DR_{X\times\Afu_v/\Afu_v}\ccE
\]
is a quasi-isomorphism for each $\alpha\!\in\![0,1)$ and $p\!\in\!\ZZ$ compatible with the action of~$\partial_v$.
\end{proposition}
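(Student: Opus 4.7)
The plan is to show, for each $\alpha\in[0,1)$ and $p\in\ZZ$, that the natural inclusion of complexes $F^\Yu_{\alpha+p}\DR \hookrightarrow F_{\alpha+p}\DR$ is a quasi-isomorphism; compatibility with $\partial_v$ will then be immediate from the fact that the map is induced by the inclusion of Yu coefficient sheaves into $F_{\alpha+p+\bullet}\ccE$ and that $\partial_v$ acts on both sides as $\partial/\partial v + f$, with multiplication by $f$ raising the pole order along $P$ by exactly one, matching the shift by one of the Yu filtration. The question is local on $X$, so I would split into three cases.

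First, over a point of $X\setminus D$, the two filtered complexes coincide with the ordinary relative de~Rham complex of $\cO_X[v]\revf$ and the inclusion is an equality. Second, over a point of $H\setminus P_\red$, the filtration $F_{\alpha+p}\ccE$ reduces to the pole-order filtration along $H$ (since $v\rd f$ is regular there), and the inclusion becomes the classical Deligne quasi-isomorphism between the log de~Rham complex along $H$ with its pole-order filtration and the full meromorphic de~Rham complex of $\cO_X(*H)[v]$ with its pole filtration; this is a standard input from \cite{Deligne70}, parametric in $v$.

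The substantive case is near a point of $P_\red$, where I would adopt the local coordinates of \S\ref{subsec:setting} with $f=x^{-\bme}$. My plan is to check that the inclusion induces a quasi-isomorphism on each graded piece $\gr^{F_\alpha}_p$; since by Lemma \ref{lem:Fexpressionv}\eqref{lem:Fexpressionvgood} the filtration $F_{\alpha+\bbullet}$ is exhaustive and good, this suffices (and the analogous properties are built into the Yu filtration by construction). On the right-hand side, Lemmas \ref{lem:grFalphaVE} and \ref{lem:FValphav} describe $\gr^{F_\alpha}_p V_\alpha\ccE$ through the explicit basis $x^{-[\alpha\bme]-\mathbf{1}-\bma}Q_{\bma,p-|\bma|,\alpha}(v/x^\bme)\revf$ together with the increment rule for the $\partial_{x_i}$'s. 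On the left-hand side, the graded piece of the Yu filtration at degree $k$ is a quotient of $\Omega^k_X(\log D)([(\alpha+p+k)P])[v]$ by its lower pole-order subsheaf.

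The technical heart is a Koszul computation. The induced differential on the graded Yu complex is, modulo lower pole order, a multiple of $v\rd f\wedge = -v\sum e_i\, x^{-\bme}\,\frac{\rd x_i}{x_i}\wedge$, producing a Koszul complex on the logarithmic forms $\mathrm{dlog}\,x_i$. One matches this, multi-index by multi-index $\bma$, with the generators of $\gr^{F_\alpha}_p V_\alpha\ccE$, so that the $\partial_{x_i}$-increments of Lemma \ref{lem:FValphav}\eqref{lem:FValphav2} precisely reproduce the Koszul differential once the polynomials $Q_{\bma,\lambda,\alpha}(v/x^\bme)$ are correctly identified with their counterparts in the log Koszul complex. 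The main obstacle is this bookkeeping: the pole orders along each component of $P_\red$, the degree in $v$, the interplay between $\rd$ and $v\rd f$, and the role of the polynomials $Q_{\bma,\lambda,\alpha}$ coming from the $V$-filtration analysis of \S\ref{subsec:VfiltEv} all have to be tracked simultaneously. This is essentially the rescaled, relative-over-$\Afu_v$ incarnation of Yu's theorem in \cite{E-S-Y13}, and once the Koszul matching is done, the filtered quasi-isomorphism follows, with the $\partial_v$-compatibility being a direct verification on the generators $\omega\cdot v^k\revf$ with $\omega$ a logarithmic form.
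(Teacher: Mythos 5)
Your overall architecture (localize on $X$, pass to graded pieces, use Deligne's comparison away from $\ccP_\red$ and a Koszul computation along $\ccP_\red$) is the right one, and it is in fact how the reference the paper's proof invokes, \cite[Prop.\,1.7.4]{E-S-Y13}, proceeds; your treatment of the $\partial_v$-compatibility and of the two easy local cases is essentially correct. One small point there: near $\ccH\setminus\ccP_\red$ the reduction to Deligne's theorem goes through the conjugation $\rd+v\rd f=e^{-vf}\circ\rd\circ e^{vf}$, which only makes sense for the analytic version of $\ccE$ (not for $\cO_X(*D)[v]$), and this is worth flagging rather than hiding under ``parametric in $v$''.

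The substantive case near $\ccP_\red$, however, contains a genuine confusion. The right-hand side of Proposition \ref{prop:FFYu} is $F_{\alpha+p}\DR_{X\times\Afu_v/\Afu_v}\ccE$, whose degree-$k$ graded piece is $\Omega^k\otimes\gr^{F_\alpha}_{p+k}\ccE$, the graded of the filtration of Definition \ref{def:Fexpressionv} on the \emph{full} module $\ccE$. You propose instead to match the Yu complex against $\gr^{F_\alpha}_pV_\alpha\ccE$ as described by Lemmas \ref{lem:grFalphaVE} and \ref{lem:FValphav} and the polynomials $Q_{\bma,\lambda,\alpha}(v/x^{\bme})$. Those lemmas compute the filtration \emph{induced on} $V_\alpha\ccE$, and the complex they are matched against in the paper is the Kontsevich complex $(\Omega_f^\cbbullet(\alpha)[v],\rd+v\rd f)$ with its stupid filtration: that is the content and the proof of Theorem \ref{th:4main} in \S\ref{subsec:pf4main}, not of Proposition \ref{prop:FFYu}. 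The two statements have different left-hand sides ($\Omega_f^k(\alpha)[v]$ versus $\Omega^k(\log\ccD)([(\alpha+p+k)\ccP])_+$) and different right-hand sides ($V_\alpha\ccE$ versus $\ccE$), and neither substitution is harmless: $\gr^{F_\alpha}_{p+k}\ccE$ must be controlled through the interplay of the $v$-degree filtration $G_\bbullet$ with the whole $V$-filtration (Lemma \ref{lem:FF'v}: $F_{\alpha+p}E=\sum_{k+j\leq p}(G_kE\cap V_{\alpha+j}E)$), not through $V_\alpha$ alone. So the Koszul matching at the heart of your argument, as written, compares two objects that are not the ones in the statement. It would need to be redone with $\gr^{F_\alpha}_{p+k}\ccE$ on the right and the quotients of $\Omega^k(\log\ccD)([(\alpha+p+k)\ccP])_+$ by lower pole order on the left — which amounts to running the proof of \cite[Prop.\,1.7.4]{E-S-Y13} while keeping track of the extra grading in $v$, and this is exactly the reduction the paper makes.
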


\begin{proof}
The existence of a natural morphism follows from Lemma \ref{lem:Fexpressionv}. The compatibility with respect to the action of $\partial_v$ is then clear. The proof is then similar to that of \cite[Prop.\,1.7.4]{E-S-Y13}. We note that, away from $\ccP_\red$, we use that the morphism
\begin{equation}\label{eq:awayP}
(\Omega^\bbullet_{X\times\Afu_v/\Afu_v}(\log\ccH),\rd+v\rd f)\to(\Omega^\bbullet_{X\times\Afu_v/\Afu_v}(*\ccH),\rd+v\rd f),
\end{equation}
is a filtered quasi-isomorphism. Here the analytic version of $\ccE$ is needed in order to write $\rd+v\rd f=e^{-vf}\circ\rd\circ e^{vf}$.
\end{proof}

\subsection{Proof of Theorem \ref{th:4main}}\label{subsec:pf4main}

We consider the complex $(\Omega_f^\cbbullet(\alpha)[v],\rd+v\rd f)$. We have a natural connection
\[
\nabla_{\partial_v}:(\Omega_f^\cbbullet(\alpha)[v],\rd+v\rd f)\to(\Omega_f^\cbbullet(\alpha+1)[v],\rd+v\rd f)
\]
induced by the action of $f+\partial/\partial v$ on each term of the complex.

\begin{lemme}\label{lem:omegafV}
For $\alpha\in[0,1)$, there is a natural filtered morphism
\[
\big(\cO_{X\times\Afu_v}\otimes_{\cO_X[v]}\Omega_f^\cbbullet(\alpha)[v],\rd+v\rd f,\sigma^{\geq p}\big)\to(\DR_{X\times\Afu_v/\Afu_v}V_\alpha\ccE,F_\alpha^p\DR_{X\times\Afu_v/\Afu_v}V_\alpha\ccE)
\]
which makes the following diagram commutative:
\[
\xymatrix{
\big(\cO_{X\times\Afu_v}\otimes_{\cO_X[v]}\Omega_f^\cbbullet(\alpha)[v],\rd+v\rd f\big)\ar[d]_{\nabla_{\partial_v}}\ar[r]&\DR_{X\times\Afu_v/\Afu_v}V_\alpha\ccE\ar[d]^{\nabla_{\partial_v}}\\
\big(\cO_{X\times\Afu_v}\otimes_{\cO_X[v]}\Omega_f^\cbbullet(\alpha+1)[v],\rd+v\rd f\big)\ar[r]&\DR_{X\times\Afu_v/\Afu_v}V_{\alpha+1}\ccE
}
\]
\end{lemme}

\begin{proof}
Once the morphism is defined, the commutativity of the diagram is straightforward: let $\rd$ denote the differential with respect to $X$ and $\rd_v$ that with respect to $v$; the verification reduces to checking that $e^{-vf}\circ\rd\circ e^{vf}$ commutes with $e^{-vf}\circ\rd_v\circ e^{vf}$, a statement which follow from the commutation of $\rd$ with $\rd_v$.

Away from $P_\red$, the morphism is given by \eqref{eq:awayP}. At a point of $P_\red$, we will use the algebraic version $E$ of $\ccE$ for simplicity. For each $k\geq0$, we have a natural inclusion
\[
(F_0\cO(*P_\red)\big)([\alpha P])(*H)v^k\revf\subset V_\alpha E.
\]
Indeed, it is enough to prove the inclusion
\[
x^{k\bme}(F_0\cO(*P_\red)\big)([\alpha P])(*H)\cdot(v/x^\bme)^k\revf\subset V_\alpha E
\]
and then the inclusion
\[
x^{k\bme}(F_0\cO(*P_\red)\big)([\alpha P])(*H)Q_{{\bf0},j,\alpha}(v/x^{\bme})\subset V_\alpha E\quad\text{with }P_{{\bf0},j,\alpha}(s)=(s+\alpha)^j,
\]
by expressing $(v/x^{\bme})^k$ in terms of the $Q_{{\bf0},j,\alpha}(v/x^{\bme})$ with $j\leq k$. The assertion is then clear by taking the term with $\bma=0$ in the expression of Lemma \ref{lem:Vfiltration}. We thus obtain the desired morphism.

In order to prove that it is filtered, we note that for each $k\geq0$, the natural inclusion morphism $\Omega_f^k(\alpha)[v]\to\Omega^k_X\otimes_{\cO_X}E$ factorizes through the subsheaf $\Omega^k_X\otimes_{\cO_X}F_\alpha V_\alpha E$. Indeed, according to \eqref{eq:FalphaValpha}, we have $F_\alpha V_\alpha E=F_\alpha E=F_0\cO_X(*D)([\alpha P])[v]\cdot\revf$, so we are reduced to proving the inclusion $\Omega_f^k(\alpha)\subset\Omega_X^k\otimes F_0\cO_X(*D)([\alpha P])$. This is clear away from $P_\red$ since this reduces to $\Omega_X^k(\log H)\subset \Omega_X^k\otimes F_0\cO_X(*H)$. In the local setting of \S\ref{subsec:setting} near a point of $P_\red$, the conclusion follows from \cite[Formula (1.3.1)]{E-S-Y13} for $\alpha=0$, and the same formula multiplied by $x^{-[\alpha\bme]}$ if $\alpha\in(0,1)$.
\end{proof}

We will show Theorem \ref{th:4main} with the filtration $F_\alpha^\cbbullet$ introduced in Definition \ref{def:Fexpressionv}. That this is the filtration $F_\alpha^{\irr,\bbullet}$ will be shown in Theorem \ref{th:FFirr}. Near a point of $(X\moins P_\red)\times\CC_v$ we can write $\rd+v\rd f=e^{-vf}\circ\rd\circ e^{vf}$ to reduce the statement to the standard result proved by Deligne \cite{Deligne70}.

We will thus focus on $P_\red\times\CC_v$, and it will be enough to consider the $v$-algebraic version of the statement. It is also enough to prove that for each $p\geq0$, the $p$th graded morphism is a quasi-isomorphism. We are thus lead to proving that for $p\geq0$ the following vertical morphism is a quasi-isomorphism:
\begin{equation}\label{eq:grOmegafV}
\begin{array}{c}
\xymatrix@C=.6cm{
0\ar[r]&\Omega_f^p(\alpha)[v]\ar[d]\ar[r]&0\ar[d]\ar[r]&\cdots\\
0\ar[r]&F_\alpha V_\alpha E\otimes\Omega^p\ar[r]&\gr_1^{F_\alpha}V_\alpha E \otimes\Omega^{p+1}\ar[r]&\cdots
}
\end{array}
\end{equation}
Since the question is local, we can treat separately the variables $x$ and $y$, and the main problem remains the case of the $x$ variables, so that we will assume $H=\emptyset$. We will use the computations of \S\ref{subsec:computFV}, from which we keep the notation.

\begin{lemme}\label{lem:QIgrpV}
Near a point of $P_\red$, for $q\in\ZZ$ and $\alpha\in[0,1)$, the relative de~Rham complex
\[
0\to\gr_q^GV_\alpha E^{vf}\to\gr_{q+1}^GV_\alpha E^{vf}\otimes\Omega^1\to\cdots\to\gr_{q+n}^GV_\alpha E^{vf}\otimes\Omega^n\to0
\]
has zero cohomology in degrees $\geq-q+1$ (recall that $\gr_q^GV_\alpha E^{vf}=0$ for $q<0$).
\end{lemme}

\begin{proof}[Sketch of proof]
We will forget the variables $y'$ and work with the variables $x\in\CC^\ell$, so we will replace $n$ with $\ell$ in the de~Rham complex above. We then note that this complex is the simple complex associated with the hypercubical complex built on the cube in $\RR^\ell$ with vertices $\epsilong\in\{0,1\}^\ell$, whose vertex at $\epsilong$ is $\gr_{q+|\epsilong|}^GV_\alpha E^{vf}$ and whose arrows $(\epsilon_i=0)\to(\epsilon_i=1)$ are the derivatives $\partial_{x_i}$. We may as well replace the arrow~$\partial_{x_i}$ with $\partial_i=-\partial_{x_i}/e_i$.

The formula of Lemma \ref{lem:FValphav}\eqref{lem:FValphav2} shows that, if $\epsilon_i=0$, the arrow $\partial_i:\epsilong\to\epsilong+1_i$ is injective, with cokernel identified with
\[
\bigoplus_{\substack{\bma'\geq0,\,a'_i=0\\|\bma'|=q+1 }}\cO_{I(\bma')}x^{-[\alpha\bme]-\bf1}x^{-\bma'}Q_{\bma',0,\alpha}(v/x^e).
\]
We use the convention that a sum indexed by the empty set is zero, a case which occurs if $q+1<0$.
\begin{itemize}
\item
If $\ell=1$, we only need to consider the case $q\geq0$. The cokernel of $\partial_1$ is then equal to zero, showing that $\partial_1$ is bijective in this case, which implies the desired assertion.

\item
If $\ell\geq2$, we replace (with a shift) the hypercubical $\ell$-complex with the $(\ell-\nobreak1)$\nobreakdash-complex made of the cokernels of the injective arrows $\partial_1$, and the formula for the induced arrows $\partial_2,\dots,\partial_\ell$ is then that of the case $i\notin I(\bma)$ in the formula of Lemma \ref{lem:FValphav}\eqref{lem:FValphav2}. Now, the maps induced by $\partial_2$ are injective, with cokernel
\[
\bigoplus_{\substack{\bma''\geq0,\,a''_1=a''_2=0\\|\bma''|=q+2}}\cO_{I(\bma'')}x^{-[\alpha\bme]-\bf1}x^{-\bma''}Q_{\bma'',0,\alpha}(v/x^e),\quad\text{etc.}\qedhere
\]
\end{itemize}
\end{proof}

From Lemma \ref{lem:QIgrpV} we conclude that for $\alpha\in[0,1)$ and each $p\geq0$, the de~Rham complex
\refstepcounter{equation}\label{eq:grFVOmega}
\[\tag*{(\ref{eq:grFVOmega})$_\alpha$}
0\to\cdots\to0\to\gr_0^GV_\alpha E^{vf}\otimes\Omega^p\to\gr_1^GV_\alpha E^{vf}\otimes\Omega^{p+1}\to\cdots
\]
has zero cohomology in degrees $\geq p+1$, while the de~Rham complex
\[\tag*{(\ref{eq:grFVOmega})$_{\alpha-1}$}
0\to\cdots\to0=\gr_0^GV_{\alpha-1} E^{vf}\otimes\Omega^p\to\gr_1^GV_{\alpha-1} E^{vf}\otimes\Omega^{p+1}\to\cdots
\]
has zero cohomology in degrees $\geq p+2$ since $\gr_k^GV_{\alpha-1}E^{vf}\simeq\gr_{k-1}^GV_\alpha E^{vf}$, according to \eqref{eq:multvjGV}. Therefore, the quotient complex $\eqref{eq:grFVOmega}_\alpha/\eqref{eq:grFVOmega}_{\alpha-1}$ has zero cohomology in degrees $\geq p+1$. It follows then from Lemma \ref{lem:grFalphaVE} that the bottom line of \eqref{eq:grOmegafV} has zero cohomology in degrees $\geq p+1$. It remains to identify the degree $p$ cohomology of this bottom line. As noted above, we have
\[
F_\alpha V_\alpha E^{vf}=F_\alpha E^{vf}=F_0\cO(*P_\red)([\alpha P])[v]\revf,
\]
so the cohomology consists of sections of $F_0\cO(*P_\red)([\alpha P])[v]\otimes\Omega^p$ whose image by $\rd+v\rd f$ belong to $F_0\cO(*P_\red)([\alpha P])[v]\otimes\Omega^{p+1}$. This cohomology is then contained in $\Omega^p(\log P_\red)([\alpha P])[v]$, according to Lemma \ref{lem:caractOmegalog} below, and it is then easy to identify it with $\Omega^p_f(\alpha)[v]$.\qed

\begin{lemme}\label{lem:caractOmegalog}
For $k\geq0$, a section of $F_0\cO(*P_\red)\Omega^k$ belongs to $\Omega^k(\log P_\red)$ if and only if its exterior product by $\sum_{i=1}^\ell e_i\rd x_i/x_i$ belongs to $F_0\cO(*P_\red)\Omega^{k+1}$.\qed
\end{lemme}

\subsection{Some properties of the filtration \texorpdfstring{$F_\alpha^{\protect\cbbullet}\cH^k_v$}{FH}}

Recall that the $\cD_{\Afu_v}$-module $\cH^k_v$ is defined in \S\ref{subsec:rescaling}. For $\alpha\in[0,1)$, we denote by $V_\alpha\cH^k_v$ the free $\CC[v]$-lattice of~$\cH^k_v$ on which the connection $\nabla$ induced by the $\cD_{\Afu_v}$-module structure has a simple pole, with residue as in Theorem \ref{th:2main}\eqref{th:2main2}. This is also the part of indices in $[0,1)$ of the Kashiwara-Malgrange $V$-filtration of $\cH^k_v$, which exists since it is a holonomic $\cD_{\Afu_v}$-module.

By a standard result on the strictness of the Kashiwara-Malgrange $V$-filtration with respect to proper push-forward, we have:
\[
V_\alpha\cH^k_v=\image\Big[R^kq_+(\DR_{X\times\Afu_v/\Afu_v}V_\alpha\ccE)\to R^kq_+(\DR_{X\times\Afu_v/\Afu_v}\ccE)\Big]
\]
and the latter morphism is injective. We obtain, as a consequence of Proposition \ref{prop:FMHS}:

\begin{corollaire}[{of \cite[Prop.\,3.3.17]{MSaito86}}]\label{cor:propertiesF}
For each $k,\alpha,p$, $F_\alpha^\bbullet\cH^k_v$ satisfies the properties \cite[(3.2.1)]{MSaito86}.\qed
\end{corollaire}

\pagebreak[2]
Let us consider the restriction $j^*F_\alpha^\bbullet\cH_v^k$ ($j:\GGm\hto\Afu_v$).

\begin{corollaire}\label{cor:Floc}
For each $\alpha\in[0,1)$ and $p\in\ZZ$, we have an isomorphism of $\cO_{\GGm}$\nobreakdash-modules:
\[
j^*F_\alpha^p\cH^k_v\simeq\cO_{\GGm}\otimes_\CC F_\alpha^{\Yu,p} H^k_\dR(U,\nabla).
\]
In particular, $j^*F_\alpha^p\cH^k_v=\cH^k_{|\GGm}$ for $p\leq0$ and $j^*F_\alpha^p\cH^k_v=0$ for $p>k$.
\end{corollaire}

\begin{proof}
The first part follows from Lemma \ref{lem:Fexpressionv}\eqref{lem:Fexpressionuv} and the second part follows from the property $\gr_{F_\alpha^{\Yu}}^pH^k_\dR(U,\nabla)=0$ for $p\notin[0,k]$, which is a consequence of \cite[Cor.\,1.5.6]{E-S-Y13}.
\end{proof}

Recall that the irregular Hodge numbers $h_\alpha^{p,q}(f)$ are defined by \eqref{eq:irregHodgenumber}. As a consequence of Corollary \ref{cor:Floc} we have
\[
h_\alpha^{p,q}(f)=\rk\gr_{F_\alpha}^p\!\!j_*\cH^{p+q}_v.
\]

\begin{corollaire}\label{cor:F0=V}
For $\alpha\in[0,1)$, we have $F_\alpha^0\cH^k_v\supset V_\alpha\cH^k_v$.
\end{corollaire}

\begin{proof}
We have seen that both $\cO_{\Afu_v}$-modules coincide with $\cH^k_v[v^{-1}]$ after tensoring with $\cO_{\Afu_v}[v^{-1}]$ (by Corollary \ref{cor:Floc} for the first one, and by a standard property of the $V$-filtration for the second one). Hence for any $m\in V_\alpha\cH^k_v$ there exists $\ell\geq0$ such that $v^\ell m\in F_\alpha^0\cH^k_v$. Let $p\in\ZZ$ be such that $m\in F_\alpha^p\cH^k_v$. Corollary \ref{cor:propertiesF} implies that Property \cite[(3.2.1.1)]{MSaito86} holds for the filtration $F_\alpha^\bbullet\cH^k_v$, and thus the morphism $v^\ell:(F_\alpha^q\cH^k_v\cap V_\alpha\cH^k_v)\to (F_\alpha^q\cH^k_v\cap V_{\alpha-\ell}\cH^k_v)$ is an isomorphism for each $q$. It follows that
\[
m\in F_\alpha^p\cH^k_v\cap V_\alpha\cH^k_v\text{ and }v^\ell m\in F_\alpha^0\cH^k_v\cap V_{\alpha-\ell}\cH^k_v\Longrightarrow m\in F_\alpha^0\cH^k_v\cap V_\alpha\cH^k_v,
\]
as was to be proved.
\end{proof}

\subsection{Nearby cycles and the monodromy filtration}\label{subsec:nearbymonodromy}
We now consider the functor $\psi_{v,\exp(-2\pi\ri\beta)}$ ($\beta\in[0,1)$). The result of \cite[Prop.\,3.3.17]{MSaito86} implies then that, for each $\beta\in[0,1)$, the filtration naturally induced by the $\QQ$-indexed filtration $F^\bbullet\cH^k_v$ on $\psi_{v,\exp(-2\pi\ri\beta)}\cH^k_v$ is equal to
\begin{equation}\label{eq:FpsicHalpha}
F^\bbullet\bH^k(X,\DR\psi_{v,\exp(-2\pi\ri\beta)}\ccE):=\bH^k(X,F^\bbullet\DR\psi_{v,\exp(-2\pi\ri\beta)}\ccE)
\end{equation}
and therefore has jumps at $\beta+\ZZ$ at most. It is then enough to consider the filtration induced by $F_\beta^\cbbullet\cH^k_v$ on $\psi_{v,\exp(-2\pi\ri\beta)}\cH^k_v$. Then, according to the previous results, we have
\[
F_\beta^p\psi_{v,\exp(-2\pi\ri\beta)}\cH^k_v=\begin{cases}
\psi_{v,\exp(-2\pi\ri\beta)}\cH^k_v&\text{if }p\leq0,\\
0&\text{if }p>k.
\end{cases}
\]

\begin{definition}\label{def:specF}
For $\alpha\in[0,1)$ and $k\geq0$, the \emph{spectral multiplicity function} is the function
\[
\ZZ\ni p\mto\mu_\alpha^k(p):=\dim\gr^p_{F_\alpha}\psi_v\cH^k_v:=\sum_{\beta\in[0,1)}\dim\gr^p_{F_\alpha}\psi_{v,\exp(-2\pi\ri\beta)}\cH^k_v.
\]
\end{definition}

\begin{lemme}\label{lem:munu}
For each $\alpha\in[0,1)$, $k\in\NN$ and $p\in\ZZ$, we have
\[
\mu_\alpha^k(p)=h_\alpha^{p,k-p}.
\]
In particular, $\mu_\alpha^k(p)=0$ for $p\notin[0,k]$.
\end{lemme}

\begin{proof}
For $\beta\in[0,1)$, we have an isomorphism (\cf Corollary \ref{cor:propertiesF}):
\begin{equation}\label{eq:vFV}
v:(F_\alpha^p\cH^k\cap V_\beta\cH^k)\isom (F_\alpha^p\cH^k\cap V_{\beta-1}\cH^k).
\end{equation}
Therefore,
\begin{align*}
\sum_{\beta\in[0,1)}\dim F_\alpha^p\psi_{v,\exp(-2\pi\ri\beta)}\cH^k_v&=\sum_{\beta\in[0,1)}\dim F_\alpha^p\gr_\beta^V\cH^k_v=\sum_{\beta\in(\alpha-1,\alpha]}\dim F_\alpha^p\gr_\beta^V\cH^k_v\\
&=\dim\frac{F_\alpha^p\cH^k_v\cap V_\alpha\cH^k_v}{F_\alpha^p\cH^k_v\cap V_{\alpha-1}\cH^k_v}\\[5pt]
&=\dim\frac{F_\alpha^p\cH^k_v\cap V_\alpha\cH^k_v}{v(F_\alpha^p\cH^k_v\cap V_\alpha\cH^k_v)}\quad\text{(Corollary \ref{cor:propertiesF})}.
\end{align*}
Since $V_\alpha\cH^k_v$ is $\cO_{\Afu_v}$-free for $\alpha\in[0,1)$, the $\cO_{\Afu_v}$-module $F_\alpha^p\cH^k_v\cap V_\alpha\cH^k_v$ is $\cO_{\Afu_v}$\nobreakdash-torsionfree, hence $\cO_{\Afu_v}$-free, and the latter term is equal to $\rk(F_\alpha^p\cH^k_v\cap V_\alpha\cH^k_v)$, hence to $\rk(F_\alpha^p\cH^k_v)[v^{-1}]$, that is, $\dim F_\alpha^{\Yu,p}H^k_\dR(U,\nabla)$, according to Corollary \ref{cor:Floc}. The result follows from \cite[Cor.\,1.4.8]{E-S-Y13}.
\end{proof}

\begin{proof}[Proof of Theorem \ref{th:3main}]
By Lemma \ref{lem:FF'v} and \eqref{eq:FgrV}, we can apply \cite[Th.\,5.3]{Bibi96a} to the filtration given by \eqref{eq:FpsicHalpha}. It remains to identify the latter with the irregular Hodge filtration. This follows from Theorem \ref{th:FFirr} below.
\end{proof}

\section{The \texorpdfstring{$\cD_{X\times\Afu_u}$}{DAu}-module \texorpdfstring{$\ccE^{f/u}(*\ccH)$}{EfuH}}\label{sec:Euf}

We now focus on the $u$-chart. In this section, we will consider the $\cDXu$-module $E^{f/u}(*H):=(\cO_X(*D)[u,u^{-1}],\rd+\rd(f/u))$ and we use the identification
\[
E^{f/u}(*H)=\cO_X(*D)[u,u^{-1}]\cdot\refu,
\]
which makes clear the twist of the $\cDXu$-structure. We will denote for short $E=E^{f/u}(*H)$.

\subsection{The Brieskorn lattice of the $\cDXu$-module $E^{f/u}(*H)$}\label{subsec:Brieskornlattice}
Let $F_\bbullet\cD_X$ denote the filtration of~$\cD_X$ by the order of differential operators, and consider the Rees ring $R_F\cD_X:=\bigoplus_kF_k\cD_X\cdot u^k$, which can be expressed in local coordinates as $\cO_X[u]\langle u\partial_x,u\partial_y,u\partial_{y'}\rangle$. It will be useful to extend it by adding the action of $u^2\partial u$. We obtain in this way a sheaf of rings $R_F\cD_X\langle u^2\partial_u\rangle$, that we will denote by $G_0\cDXu$. It is naturally filtered by the order with respect to the partials, a filtration that we denote by $F_\bbullet G_0\cDXu$.

\begin{remarque}\label{rem:uhb}
The Rees construction is the same as that used in \S\ref{sec:MTEf}. However the notation for the extra variable used here is not the same as in \S\ref{sec:MTEf} since it will not play the same role. We will use both in \S\ref{sec:relwithFirr}.
\end{remarque}

The \emph{Brieskorn lattice} $G_0E$ defined in \cite[\S1]{Bibi97b} is the $\cO_X(*P_\red)[u]$-module
\begin{equation}\label{eq:G0E}
G_0E:=\bigoplus_j(F_j\cO_X(*H))(*P_\red)\cdot u^j\refu,
\end{equation}
and we set, for each $p\in\ZZ$, $G_pE=u^{-p}G_0E$. Then $G_\bbullet E$ is an increasing filtration of $E$ indexed by $\ZZ$. Note that, if $\nablaf$ denotes the relative connection on~$E$ induced by the $\cD_X$-module structure, then $G_0E$ is preserved by~$u\nablaf $. It is also preserved by the action of $u^2\partial_u$. In other words, $G_0E$ is a $G_0\cDXu$-module. For example, if $H=\emptyset$, we have
\[
G_0E^{f/u}=\cO_X(*P_\red)[u]\cdot\refu.
\]
Using the Rees module notation, we can also write
\[
\big(G_0E,u\nabla)=\big((R_F\cO_X(*H))(*P_\red), u\rd+\rd f\big).
\]

\Subsection{The filtration \texorpdfstring{$F_{\alpha+\bbullet}G_0E^{f/u}(*H)$}{FE}}\label{subsec:FalphaG0}

Although the function $f/u$ does not extend as a map $X\times\CC_u\to\PP^1$, we can nevertheless adapt in a natural way the definition given in \cite[(1.6.1) \& (1.6.2)]{E-S-Y13} for the case of the map $f:X\to\PP^1$.

\begin{definition}[The filtration]\label{def:Fexpressionu}
For $\alpha\in[0,1)$ we set
\begin{align*}
F_{\alpha+p}G_0E^{f/u}&=F_p\cO_X(*P_\red)\big([(\alpha+p)P]\big)[u]\cdot\refu,\\
F_{\alpha+p}G_0E&=\sum_{q+q'\leq p}u^qF_q\cO_X(*H)\cdot F_{\alpha+q'}G_0E^{f/u}.
\end{align*}
\end{definition}

\begin{lemme}\label{lem:Fexpressionu}\mbox{}
For each $\alpha\in[0,1)$, the filtration $F_{\alpha+\bbullet}G_0E$ is an $F_\bbullet G_0\cDXu$-filtration which satisfies the following properties.
\begin{enumerate}
\item\label{lem:Fexpressionucomp}
$F_{\alpha+p_1}G_0E\subset F_{\beta+p_2}G_0E$ for all $p_1,p_2\in\ZZ$ and $\beta\in[0,1)$ such that $\alpha+p_1\leq\beta+p_2$. Moreover, $F_{\alpha+p}G_0E=0$ for $p<0$.
\item\label{lem:Fexpressionugood}
The filtration $F_{\alpha+\bbullet}G_0E$ satisfies
\[
F_{\alpha+p}G_0E=F_pG_0\cDXu\cdot F_\alpha G_0E\,;
\]
in particular, it is good with respect to $F_\bbullet G_0\cDXu$.
\end{enumerate}
\end{lemme}

\begin{proof}
Similar to that of Lemma \ref{lem:Fexpressionv}.
\end{proof}

We will give an expression of $F_{\alpha+p}G_0E$ in terms of the $V$-filtration. We set $G_pE=u^{-p}G_0E$ and we identify $E[u^{-1}]$ with $E^{vf}(*H)[v^{-1}]$, so that we can define the filtration $V_{\alpha+k}(E[u^{-1}])$ as being the filtration $V_{\alpha+k}(E^{vf}(*H)[v^{-1}])$ considered in \S\ref{subsec:VfiltEv}. Note that, since $v$ is invertible on $E^{vf}(*H)[v^{-1}]$ and since $V_\alpha(E^{vf}(*H)[v^{-1}])=V_\alpha(E^{vf}(*H))$ for $\alpha\in[0,1)$, we have
\[
V_{\alpha+k}(E[u^{-1}])=V_{\alpha+k}(E^{vf}(*H)[v^{-1}])=v^{-k}V_\alpha(E^{vf}(*H))=u^kV_\alpha E.
\]

For $\alpha\in[0,1)$, we set
\begin{equation}\label{eq:Fprimealphapu}
F'_{\alpha+p}G_0E:=u^p\CC[u](G_pE\cap V_\alpha E)=\CC[u](G_0E\cap u^pV_\alpha E),
\end{equation}
where the intersection is taken in $E[u^{-1}]$. This is an $F_\bbullet(G_0\cDXu)$-filtration since $u\partial_{x_i}$ sends $V_\alpha E$ to $uV_\alpha E$, and so does $u^2\partial_u=-\partial_v$.

\begin{lemme}\label{lem:FF'u}
For each $\alpha\in[0,1)$ and $p\in\ZZ$ we have
\[
F'_{\alpha+p}G_0E=F_{\alpha+p}G_0E.
\]
\end{lemme}

\begin{proof}
It will be similar to that of Lemma \ref{lem:FF'v}. In the neighbourhood of a point of $X\setminus P_\red$, Definition \ref{def:Fexpressionu} gives $F_{\alpha+p}G_0E=\sum_{q=0}^pu^qF_q\cO(*H)[u]\cdot\refu$, while a computation similar to that at the beginning of the proof of Lemma \ref{lem:FF'v} gives \hbox{$G_pE\cap V_\alpha E=\bigoplus_{0\leq q\leq p} F_q\cO(*H)u^{q-p}\cdot\refu$}, hence the result by multiplying the latter term by $u^p\CC[u]$.

In the neighbourhood of a point of $P_\red$, the inclusion $\supset$ is proved exactly as in Lemma \ref{lem:FF'v}. For the inclusion $\subset$, we use \eqref{eq:uniquevdvH} with $\beta=\alpha$. Using similarly $v^j\partial_v^j$ instead of $(v\partial_v)^j$, and replacing $v^j$ with $u^{-j}$, a term of $G_pE\cap V_\alpha E$ in the sum \eqref{eq:uniquevdvH} can be written as
\[
h_{\bma,\bmc,\lambda,j,\alpha}(x_{I(\bma)},y_{J(\bmc)},y')x^{-[(\alpha+j)\bme]-\bf1}x^{-\bma}y^{-\bmc-\bf1}u^{-j}\refu,
\]
with $j\leq q':=\lambda+|\bma|$ and $q:=|\bmc|\leq p-q'$. Note that $p-q-j\geq0$. So each term in $u^p(G_p\cap V_\alpha)$ is a sum of terms
\[
h_{\bma,\bmc,\lambda,j,\alpha}(x_{I(\bma)},y_{J(\bmc)},y')\cdot x^{-[(\alpha+q')\bme]-\bf1}x^{-(\bma-(q'-j)\bme)}u^{p-q-j}\cdot(u^qy^{-\bmc-\bf1})\refu
\]
which all belong to $F_{\alpha+p}G_0E$ (Definition \ref{def:Fexpressionu}), since $\bma\geq0$, $|\bma|\leq q'$ hence, according to Lemma \ref{lem:precision}, $|\bma-(q'-j)\bme|_+\leq j\leq q'$ for any $j$ such that $0\leq j\leq q'$.
\end{proof}

\begin{remarque}\label{rem:FF'u}
We conclude from the lemma that the filtration $F_{\alpha+p}\gr_0^GE$ induced by $F_{\alpha+p}G_0E$ is nothing but the filtration induced by $u^pV_\alpha(E[u^{-1}])=V_{\alpha+p}(E[u^{-1}])$. Indeed, recalling that $uG_0=G_{-1}$, we have
\bgroup
\multlinegap0pt
\begin{multline*}
\frac{\CC[u](G_0E\cap u^pV_\alpha(E[u^{-1}]))}{uG_0E\cap\big(\CC[u](G_0E\cap u^pV_\alpha(E[u^{-1}]))\big)}\\
=\frac{G_0E\cap u^pV_\alpha(E[u^{-1}])}{(uG_0E\cap u^pV_\alpha(E[u^{-1}]))+\big[G_0E\cap u^pV_\alpha(E[u^{-1}])\cap u\CC[u](G_0E\cap u^pV_\alpha(E[u^{-1}]))\big]}\\
=\frac{G_0E\cap u^pV_\alpha(E[u^{-1}])}{G_{-1}E\cap u^pV_\alpha(E[u^{-1}])}
\end{multline*}
\egroup
It follows that
\[
\frac{F_{\alpha+p}\gr_0^GE}{F_{<\alpha+p}\gr_0^GE}\underset{\sim}{\To{u^{-p}}}\gr_\alpha^V\gr_p^GE=\gr_p^G\gr_\alpha^VE^{vf}(*\ccH),
\]
and we conclude that $\gr_{\alpha+p}^F\gr_0^GE$ can be computed from data in the $v$-chart.
\end{remarque}

\subsection{Proof of Theorem \ref{th:5main}}\label{subsec:pf5main}
We have
\begin{multline*}
\DR_XE=\Big\{0\to\cO_X(*D)[u,u^{-1}]\To{\rd+\rd f/u}\\[-5pt]
\cdots\To{\rd+\rd f/u}\Omega^n_X(*D)[u,u^{-1}]\to0\Big\}.
\end{multline*}
It will be convenient to use the complex
\begin{multline*}
\ccDR_XE:=\Big\{0\to\cO_X(*D)[u,u^{-1}]\To{u\rd+\rd f}\\[-5pt]
\cdots\To{u\rd+\rd f}\Omega^n_X(*D)[u,u^{-1}]\to0\Big\}.
\end{multline*}
Both complexes are obviously isomorphic by multiplying the $k$th term of the first one by $u^k$, a morphism that we denote by $u^\cbbullet$.

The subcomplex $\DR_XG_0E$ of $\DR_XE$ is defined by
\begin{multline}\label{eq:DRG0}
\DR_XG_0E:=\Big\{0\to(R_F\cO_X(*H))(*P_\red)\To{\rd+\rd f/u}\\
\cdots\To{\rd+\rd f/u}\Omega^n_X\otimes(u^{-n}R_F\cO_X(*H))(*P_\red)\to0\Big\}.
\end{multline}
Similarly, the subcomplex $\ccDR_XG_0E$ of $\ccDR_XE$ is defined by
\begin{multline}\label{eq:ccDRG0}
\ccDR_XG_0E:=\Big\{0\to(R_F\cO_X(*H))(*P_\red)\To{u\rd+\rd f}\\
\cdots\To{u\rd+\rd f}\Omega^n_X\otimes(R_F\cO_X(*H))(*P_\red)\to0\Big\}.
\end{multline}
For example, if $H=\emptyset$, we obtain the complexes
\begin{multline*}
\Big\{0\ra\cO_X(*P_\red)[u]\To{\rd+\rd f/u}\cdots\To{\rd+\rd f/u}u^{-n}\Omega^n_X(*P_\red)[u]\ra0\Big\}\\
\overset{\;\,\textstyle u^\cbbullet}\simeq\Big\{0\ra\cO_X(*P_\red)[u]\To{u\rd+\rd f}\cdots\To{u\rd+\rd f}\Omega^n_X(*P_\red)[u]\ra0\Big\}.
\end{multline*}
The relative de~Rham complex $\ccDR_XG_0E$ is naturally filtered by
\begin{equation}\label{eq:FDRu}
F_{\alpha+p}\ccDR_XG_0E:=\Big\{0\to F_{\alpha+p}G_0\ccE\to\Omega^1_X\otimes F_{\alpha+p+1}G_0\ccE\to\cdots\Big\}.
\end{equation}

The proof of Theorem \ref{th:5main} is obtained by adapting the proofs of \cite[Cor.\,1.4.5 \& Prop.\,1.7.4]{E-S-Y13} to the present situation. We add the parameter $u$ and we consider the $u$-connection $u\rd+\rd f$. The natural inclusion morphism $\Omega_f^k(\alpha)[u]\to\Omega^k_X\otimes_{\cO_X}E$ factorizes through $\Omega^k_X\otimes_{\cO_X}F_\alpha G_0E$ since $F_\alpha G_0E=F_0\cO_X(*D)([\alpha P])[u]\cdot\refu$, and this shows that the filtered morphism of Theorem \ref{th:5main} is well-defined. To prove that it is a filtered quasi-isomorphism, we note that, for the analogue of \cite[Prop.\,1.7.4]{E-S-Y13} the ultimate step of the proof, after grading the complexes, is the same as in \loccit, since the graded differential is $\rd\log x^{-\bme}$ in both cases. Similarly, the arguments of \cite{Yu12} used in the proof of \cite[Prop.\,1.4.2\,\&\,Cor.\,1.4.5]{E-S-Y13} reduce the problem to proving a quasi-isomorphism with a graded differential which does not depend on~$u$.\qed

\subsection{Push-forward of the Brieskorn lattice}\label{subsec:qEu}
Let us consider the push-for\-ward~$\cH^k_u$ as obtained in the chart~$\Afu_u$, that is,
\[
\cH^k_u=R^kq_*\DR_{X\times\Afu_u/\Afu_u}(\ccE)\overset{\,\textstyle u^\cbbullet}\simeq R^kq_*\ccDR_{X\times\Afu_u/\Afu_u}(\ccE).
\]
We set $\bH^k_u:=\Gamma(\Afu_u,\cH^k_u)$, so that the above isomorphism becomes
\begin{equation}\label{eq:isouv}
\bH^k_u\simeq
\bH^k\big(X,(\Omega_X^\cbbullet(*D)[u,u^{-1}],u\rd+\rd f)\big)
\end{equation}
We obviously have $\bH^k_u=\bH^k_u[u^{-1}]=\bH^k_v[v^{-1}]$, and it is a free $\CC[u,u^{-1}]$-module with connection.

Let us consider the $\CC[u]$-module
\begin{equation}\label{eq:BrieskornHkv}
\begin{split}
G_0\bH^k_u&=\bH^k\big(X,(\Omega_X^\cbbullet\otimes_{\cO_X}\!\!(u^{-\cbbullet}R_F\cO_X(*H))(*P_\red),\rd+\rd f/u)\big)\\
&\simeq\bH^k\big(X,(\Omega_X^\cbbullet\otimes_{\cO_X}\!\!(R_F\cO_X(*H))(*P_\red),u\rd+\rd f)\big)\\
&=\bH^k\big(X,\ccDR_{X\times\Afu_u/\Afu_u}(G_0\ccE)\big)\quad\text{according to \eqref{eq:ccDRG0}.}
\end{split}
\end{equation}
For example, if $H=\emptyset$, we have
\begin{align*}
G_0\ccE^{f/u}&=\cO_X(*P_\red)[u]\cdot\refu,\\
\tag*{and}G_0\bH^k_u&=\bH^k\big(X,(u^{-\cbbullet}\Omega_X^\cbbullet(*P_\red)[u],\rd+\rd f/u)\big)\\
&\simeq\bH^k\big(X,(\Omega_X^\cbbullet(*P_\red)[u],u\rd+\rd f)\big).
\end{align*}
According to \cite{MSaito87}, we can apply the proposition in \cite[\S1]{Bibi97b} to $\ccE[v^{-1}]$ and get:

\begin{proposition}\label{prop:G0free}
For each $k$, $G_0\bH^k_u$ is a free $\CC[u]$-module, hence is a $\CC[u]$-lattice of $\bH^k_u$, and we have $\CC[u,u^{-1}]\otimes_{\CC[u]}\nobreak G_0\bH^k_u\simeq\bH^k_u=\bH^k_v[v^{-1}]$ by the isomorphism \eqref{eq:isouv}.\qed
\end{proposition}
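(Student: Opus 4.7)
The plan is to invoke the freeness theorem of \cite[\S1]{Bibi97b} for Brieskorn lattices of Gauss-Manin systems attached to mixed Hodge modules, which rests on M.~Saito's strictness theorem~\cite{MSaito87}. The task reduces to verifying that our setup fits into that framework.

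I would first rewrite the second line of~\eqref{eq:BrieskornHkv} so as to exhibit $G_0\bH^k_u$ as the hypercohomology over~$X$ of the Rees complex, with Rees parameter~$u$, associated with the filtered de~Rham complex of $\cO_X(*D)\cdot\refu$ equipped with the pole-order filtration along~$H$, the twisted differential being $u\rd+\rd f$. Unwinding~\eqref{eq:G0E}, this identification is immediate: $G_0\ccE^{f/u}(*H)$ is precisely the Rees module $R_F\bigl(\cO_X(*H)\bigr)(*P_\red)$ twisted by~$\refu$, and the relative de~Rham differential becomes $u\rd+\rd f$ after applying the Rees twist $u^\cbbullet$. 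This is exactly the Brieskorn-lattice construction studied in \cite[\S1]{Bibi97b} for the filtered $\cD_X$-module $(\cO_X(*H),F_\bullet)$, which is known to underlie a mixed Hodge module by \cite{MSaito87,Deligne70}, further localized along~$P_\red$ and twisted by the rational function~$f$.

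I would then invoke the cited proposition, whose proof proceeds in two stages: first, strictness of the push-forward of the filtered de~Rham complex along the projective morphism $X\to\mathrm{pt}$, which is Saito's theorem; second, a translation, via the Rees construction with variable~$u$ and the extra twist by~$\rd f$, of strictness into the absence of $u$-torsion in the hypercohomology. Combined with the coherence and finite generation over $\CC[u]$ of the push-forward, this yields that $G_0\bH^k_u$ is $\CC[u]$-free. For the last assertion, the natural inclusion of complexes $\ccDR_X(G_0\ccE)\hookrightarrow\ccDR_X(\ccE)$ becomes an isomorphism after $\otimes_{\CC[u]}\CC[u,u^{-1}]$, and since hypercohomology commutes with the flat localization at~$u$, this gives $\CC[u,u^{-1}]\otimes_{\CC[u]}G_0\bH^k_u\simeq\bH^k_u$. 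The identification $\bH^k_u=\bH^k_v[v^{-1}]$ then follows from the quasi-isomorphism~\eqref{eq:varphi} expressing the change of chart on $\PP^1_v$.

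The main technical content is concealed in the cited proposition of \cite{Bibi97b}, where the nontrivial input is the absence of $u$-torsion at the origin~$u=0$. This depends crucially on projectivity of~$X$ (so that push-forward by~$q$ is proper in the $X$-direction) and on the mixed-Hodge-module origin of the filtration, so that Saito's strictness applies; once that is granted, the remainder of the proof is bookkeeping.
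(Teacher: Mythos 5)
Your proposal is correct and follows the same route as the paper: the paper's proof consists of the single observation that, by \cite{MSaito87}, the proposition of \cite[\S1]{Bibi97b} applies to $\ccE[v^{-1}]$, i.e.\ to the Brieskorn-lattice (Rees) construction attached to the filtered $\cD_X$-module $(R_F\cO_X(*H))(*P_\red)$ twisted by $\mathrm{e}^{f/u}$, exactly as you describe. Your unpacking of that citation (Saito strictness for the projective push-forward, translation into absence of $u$-torsion, and localization for the final identification) matches the content the paper delegates to \cite{Bibi97b} and to its own appendix.
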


\begin{remarque}[Stability under $u^2\partial_u$]\label{rem:actionud2u}
The natural action of $u^2\partial_u$ on $G_0E$ induces an action on $u^{-k}\Omega_X^k\otimes G_0E$ which defines an action of $u^2\partial_u$ on the complex $\DR_X(G_0E)$, hence on its cohomology $G_0\bH^k_u$ (equivalently, a shifted action by $u^2\partial_u-ku$ on $\Omega_X^k\otimes\nobreak G_0E$, hence on $\ccDR_X(G_0X)$ and on its cohomology). In other words, the action of $\partial_u$ on $\bH^k_u$ has a pole of order at most two when restricted to $G_0\bH^k_u$.
\end{remarque}

For each $k$, we have a natural morphism (\cf \eqref{eq:FDRu})
\begin{equation}\label{eq:RkFu}
\bH^k\big(X,F_{\alpha+p}\ccDR_XG_0E\big)\to \bH^k\big(X,\ccDR_XG_0E\big)=:G_0\bH^k_u,
\end{equation}
whose image is denoted by $F_{\alpha+p}G_0\bH^k_u$. The source of this morphism is a $\CC[u]$\nobreakdash-module of finite type because $q$ is proper and the terms of the complex \eqref{eq:FDRu} are $\cO_X[u]$-coherent. As already mentioned after Theorem \ref{th:5main}, \eqref{eq:RkFu} is injective for each $k$. The filtered $G_0\CC[u]\langle\partial_u\rangle$-module $(G_0\bH^k_u,F_{\alpha+\bbullet}G_0\bH^k_u)$ is the $(k-\dim X)$th push-forward of the filtered $G_0\cDXu$-module $(E,F_{\alpha+\bbullet}E)$.

\subsection{The case of cohomologically tame functions on affine varieties}
\let\Afu\oldAfu
In this subsection we use the Zariski topology on $U,X$ and $X\times\Afu_u$. We still denote by $\ccE$ the $\cD_{X\times\Afu_u}$-module $\cO_{X\times\Afu_u}(*\ccD)\cdot\refu$ and we make the abuse of identifying it with $\cO_X(*D)[u,u^{-1}]\cdot\refu$ (where $X$ has its Zariski topology).

Assume that $U$ is affine and that $f:U\to\Afu$ is a cohomologically tame function, in the sense of \cite[\S8]{Bibi96bb} (\cf also \cite[Prop\ptbl14.13.3(2)]{Katz90} for a weaker condition). In particular, $f$ has only isolated critical points. Then $H^k_{\dR}(U,\rd+\rd f)=0$ unless $k=n:=\dim X$, and $\dim H^n_{\dR}(U,\rd+\rd f)$ is equal to the sum of the Milnor numbers at the critical points. The Brieskorn lattice $G_0(f)$ is defined as $G_0(f):=\Omega^n(U)[u]\big/(u\rd+\nobreak\rd f)\Omega^{n-1}(U)[u]$.

\begin{proposition}
Under this tameness assumption on $f$, the natural morphism of complexes
\[
\bR q_*\ccDR_{X\times\Afu_u/\Afu_u}(G_0\ccE)\to\bR q_*(\Omega^\cbbullet_{U\times\Afu_u/\Afu_u},u\rd+\rd f)=(\Omega^\cbbullet(U)[u],u\rd+\rd f)
\]
is a quasi-isomorphism, from which one deduces, through $\bH^n(u^\cbbullet)$, an equality $G_0\bH^n_u=u^{-n}G_0(f)$ in
\[
\bH^n_u\simeq\Omega^n(U)[u,u^{-1}]\big/(u\rd+\rd f)\Omega^{n-1}(U)[u,u^{-1}].
\]
\end{proposition}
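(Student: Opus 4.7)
The plan splits into three parts. First, since $U$ is affine, the restriction $\bar{q} := q|_{U \times \Afu_u}$ is affine, so $\bR\bar{q}_*$ reduces termwise to $\bar{q}_*$, which is exact on quasi-coherent sheaves; this immediately yields the identification $\bR q_*(\Omega^\cbbullet_{U\times\Afu_u/\Afu_u}, u\rd + \rd f) = (\Omega^\cbbullet(U)[u], u\rd + \rd f)$, i.e., the right-hand equality of the proposition.

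To establish the quasi-isomorphism, I would compare cohomologies of both complexes over $\Afu_u$. On the right-hand side, cohomological tameness of $f$ (as in \cite{Bibi96bb}) ensures that $\bH^k(U, \Omega^\cbbullet_U[u], u\rd+\rd f)$ vanishes for $k \neq n$ and equals the Brieskorn lattice $G_0(f)$, a free $\CC[u]$-module of rank $\mu$ (the total Milnor number), for $k = n$. On the left-hand side, Proposition \ref{prop:G0free} applied in this algebraic setting gives that $G_0\bH^k_u := \bH^k(X, \ccDR_X(G_0E))$ is a $\CC[u]$-free lattice of $\bH^k_u$. The restriction morphism in the proposition, composed with the natural embedding of $G_0(f)$ into $\bH^n_u$ through $(u^\cbbullet)^{-1}$, realizes $G_0\bH^k_u$ as a $\CC[u]$-submodule of $\bH^k_u$; since $\bH^k_u = 0$ for $k \neq n$ (by cohomological tameness applied at any $u_o \neq 0$, for which $f/u_o$ is again cohomologically tame), this immediately forces $G_0\bH^k_u = 0$ for $k \neq n$, reducing the quasi-isomorphism to surjectivity of the inclusion $G_0\bH^n_u \hookrightarrow u^{-n}G_0(f)$ inside $\bH^n_u$.

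The main obstacle is precisely this surjectivity: both $G_0\bH^n_u$ and $u^{-n}G_0(f)$ are $\CC[u]$-free lattices of $\bH^n_u$ of the same generic rank $\mu$, so the cokernel of the inclusion is $\CC[u]$-torsion supported at $u = 0$. I would close the gap by a Nakayama argument comparing the reductions modulo $u$: on the one hand, $G_0(f)/u G_0(f) = \Omega^n(U)/\rd f \wedge \Omega^{n-1}(U)$ is the Milnor algebra, of dimension $\mu$; on the other, by Theorem \ref{th:5main} applied at $\alpha = 0$ together with the $E_1$-degeneration available in the cohomologically tame setting, $G_0\bH^n_u/u G_0\bH^n_u$ is identified with the hypercohomology at $u = 0$ of the Kontsevich-type complex $(\Omega_f^\cbbullet(0), \rd f)$, which in this tame setting also equals the Milnor algebra. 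The inclusion is thus an isomorphism modulo $u$, hence globally. Finally, the explicit form $G_0\bH^n_u = u^{-n}G_0(f)$ in $\bH^n_u$ follows from tracking $\bH^n(u^\cbbullet)$: multiplication by $u^n$ on the $n$th term of the de~Rham complex intertwines $\rd + \rd f/u$ with $u\rd + \rd f$, so a representative $\omega \in \Omega^n(U)[u]$ of a class in $G_0(f)$ corresponds in $\bH^n_u$, viewed via $\DR_X\ccE$, to $u^{-n}\omega$, accounting for the claimed factor $u^{-n}$.
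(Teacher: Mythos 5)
Your reduction of the proposition to the single equality $G_0\bH^n_u=u^{-n}G_0(f)$ inside $\bH^n_u$ matches the paper's: affineness of $U$ gives the right-hand identification, the vanishing of $\bH^k_u$ for $k\neq n$ together with the injectivity of \eqref{eq:RkFu} (Proposition \ref{prop:G0free}) kills the cohomology of the left-hand complex outside degree $n$, and freeness of both lattices of the same rank gives injectivity of the comparison map. Where you diverge is the surjectivity. The paper (Lemma \ref{lem:G0G0}) obtains it by identifying \emph{both} sides with the Brieskorn lattice of the Hodge filtration of $\cH^0f_+\cO_U$ — the left side via the appendix, the right side via the external input \cite[\S4.c]{Bibi05}.

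Your substitute — a Nakayama argument modulo $u$ — has a genuine gap. For an inclusion $A\subset B$ of free $\CC[u]$-lattices of the same rank $\mu$, the reductions $A/uA$ and $B/uB$ automatically both have dimension $\mu$, so exhibiting \emph{abstract} isomorphisms of each with the Milnor algebra proves nothing: for $A=u\CC[u]\subset B=\CC[u]$ the induced map on reductions is zero. Nakayama requires that the map \emph{induced by the inclusion} on the mod-$u$ reductions be surjective (or injective, which combined with equal dimensions gives surjectivity), and you never address this. Closing it would require (i) a strictness statement guaranteeing that $\bH^n$ of $\ccDR_XG_0E$ commutes with reduction mod $u$ — this is the $E_1$-degeneration you invoke, which in the paper rests on Theorem \ref{th:FFirr}, proved much later — and, more seriously, (ii) the claim that the restriction map $\bH^n\big(X,(\Omega_f^\cbbullet,\rd f\wedge)\big)\to\Omega^n(U)\big/\rd f\wedge\Omega^{n-1}(U)$ is an isomorphism. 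Point (ii) is asserted without proof ("which in this tame setting also equals the Milnor algebra"), yet it is exactly a mod-$u$ avatar of the surjectivity you are trying to establish; equality of dimensions does not give it. So the key step is unsupported, whereas the paper discharges it by the comparison with the Hodge filtration of the Gauss--Manin system from \cite{Bibi05}.
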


\begin{proof}
The natural morphism is induced by
\[
(F_k\cO_X(*H))(*P_\red)\to (F_k\cO_X(*H))(*D)=\cO_X(*D)=j_*\cO_U
\]
(where $j_*$ is taken here in the Zariski topology). Through this morphism, $\bH^k(u^\cbbullet)$ corresponds to $u^\cbbullet$ termwise on the right-hand complex. Since $H^k_{\dR}(U,\rd+\rd f)=0$ unless $k=\dim X=n$, we also have $\bH^k_u=0$ unless $k=n$, and since the $k$th cohomology of the left-hand complex is contained in $\bH^k_u$, we conclude that the left-hand complex has cohomology in degree $n$ at most. We therefore obtain a morphism
\begin{equation}\label{eq:G0G0}
G_0\bH^n_u\to u^{-n}G_0(f),
\end{equation}
whose localization with respect to $u$ is an isomorphism, because $R_F\cO_X(*H)[u^{-1}]=\cO_X(*H)[u,u^{-1}]$ and thus $(R_F\cO_X(*H))(*P_\red)[u^{-1}]=\cO_X(*D)[u,u^{-1}]$, hence
\begin{align*}
\bH^k_u&=\bH^k\big(X,(\Omega^\cbbullet_X(*D)[u,u^{-1}],\rd +\rd f/u)\big)\\
&=\bH^k\big(U,(\Omega^\cbbullet_U[u,u^{-1}],\rd +\rd f/u)\big)\\
&=H^k(\Omega^\cbbullet(U)[u,u^{-1}],\rd +\rd f/u) \quad\text{($U$ affine).}
\end{align*}
Both terms of \eqref{eq:G0G0} are $\CC[u]$ free of the same rank, hence the morphism \eqref{eq:G0G0} is injective, and we may regard it as an inclusion in $\bH^n_u$ through the previous identification. The conclusion follows from the lemma below.
\end{proof}

\begin{lemme}\label{lem:G0G0}
The morphism \eqref{eq:G0G0} is an isomorphism, in other words, $G_0\bH^n_u=u^{-n}G_0(f)$ in $\bH^n_u$.
\end{lemme}

\begin{proof}[Sketch of proof]
We will see in \S\ref{subsec:Brieskorndim>1} that the Brieskorn lattice $G_0\bH^n_u$ is identified with the Brieskorn lattice attached to the filtered $\cD_{\PP^1}$-module underlying the mixed Hodge module associated with $\cH^0f_+\cO_U$. On the other hand, it is shown in \cite[\S4.c]{Bibi05} that $G_0(f)$ is identified to the Brieskorn lattice of the Hodge filtration of $\cH^0f_+\cO_U$ shifted by $n$, which leads to the result.
\end{proof}

Recall (\cf \cite{Bibi96bb}) that the \emph{spectrum of $f$ at infinity} is defined as the set of pairs $(\gamma,\delta_\gamma)$, with $\gamma\in\QQ$ and $\delta_\gamma=\dim\gr_\gamma^VG_0(f)$. It is known (\cf \loccit) that $\delta_\gamma=0$ unless $\gamma\in[0,n]$ and that $\delta_\gamma=\delta_{n-\gamma}$ (\ie the spectrum is symmetric with respect to $n/2$).

\begin{corollaire}\label{cor:G0G0}
Under the previous assumptions, let us set $\gamma=\alpha+q$, with $\alpha\in[0,1)$ and $q\in\ZZ$. Then we have
\[
\delta_\gamma=\mu_\alpha^n(n-q)=h_\alpha^{n-q,q}=\dim\gr^{n-\gamma}_{F_\Yu^\cbbullet}H^n_{\dR}(U,\rd+\rd f).
\]
\end{corollaire}

\begin{proof}
We have isomorphisms
\[
\gr_\gamma^VG_0(f)\xrightarrow[\sim]{\textstyle~u^{-n}~}\gr_{\gamma-n}^VG_0\bH^n_u=\gr_{\alpha+q-n}^VG_0\bH^n_u\xrightarrow[\sim]{\textstyle~u^{n-q}~}\gr_\alpha^VG^{n-q}\bH^n_u.\qedhere
\]
\end{proof}

\begin{remarques}\label{rem:G0G0}\mbox{}
\begin{enumerate}
\item\label{rem:G0G01}
The duality $\delta_\gamma=\delta_{n-\gamma}$ implies, together with the general duality statement of \cite[Th.\,2.2]{Yu12}, that, if $U$ is affine and $f$ is cohomologically tame, we have
\[
\dim\gr^\lambda_{F_\Yu^\cbbullet}H^n_{\dR}(U,\rd+\rd f)= \dim\gr^\lambda_{F_\Yu^\cbbullet}H^n_{\dR,\mathrm{c}}(U,\rd+\rd f) \quad\forall\lambda.
\]
\item\label{rem:G0G02}
Assume that $U=(\CC^*)^n$ with coordinates $x_1,\dots,x_n$ and that $f$ is a convenient and non-degenerate Laurent polynomial (in the sense of Kouchnirenko \cite{Kouchnirenko76}). Then it is known that $f$ is cohomologically tame. Moreover,
\begin{align*}
G_0(f)/uG_0(f)&=\Omega^n(U)\big/\rd f\wedge\Omega^{n-1}(U)\\
&\simeq\CC[x_1^{\pm1},\dots,x_n^{\pm1}]\big/(x_1\partial f/\partial x_1,\dots,x_n\partial f/\partial x_n)=\CC[x^{\pm1}]/J(f),
\end{align*}
where the isomorphism is obtained by dividing by $\rd x_1/x_1\!\wedge\!\cdots\!\wedge\!\rd x_n/x_n$, and the filtration $V_\gamma\big(G_0(f)/uG_0(f)\big)$ is identified with the Newton filtration $N_\gamma(\CC[x^{\pm1}]/J(f))$ (\cf \cite[Th.\,4.5]{D-S02a}). Therefore,
\[
\dim\gr_{F_\Yu^\cbbullet}^\lambda H_{\dR}(U,\rd+\rd f)= \dim\gr_{F_\Yu^\cbbullet}^{n-\lambda} H_{\dR}(U,\rd+\rd f)=\dim\gr_\lambda^N(\CC[x^{\pm1}]/J(f)).
\]
\item\label{rem:G0G03}
Let $Y$ be a toric Fano manifold.\footnote{We thank É.\,Mann, Th.\,Reichelt and Ch.\,Sevenheck for providing us with the necessary arguments.} Mirror symmetry associates with it a convenient and non-degenerate Laurent polynomial $f$, and the cohomology $H^*(Y,\CC)$ is identified with $\CC[x^{\pm1}]/J(f)$ graded by the Newton filtration (\cf \cite{B-C-S05}). Since the cohomology is generated by divisor classes (\cf\eg\cite[\S5.2]{Fulton93}), it is of Hodge-Tate type and the Hodge filtration reduces to the filtration by the degree of the cohomology. It follows from the previous results that the Hodge numbers of $Y$ coincide with the irregular Hodge numbers associated to $f$. Such a mirror correspondence was one of the motivations of Kontsevich to introduce the complexes $(\Omega_f^\cbbullet,u\rd+v\rd f)$.
\end{enumerate}
\end{remarques}
\let\Afu\newAfu

\section{Relation with the irregular Hodge filtration of \texorpdfstring{$\ccE^{(v{:}u)f}(*\ccH)$}{E}}\label{sec:relwithFirr}

In this section, we set $\ccE:=\ccE^{(v{:}u)f}(*\ccH)$. We will compare the filtration $F_{\alpha+\bbullet}\ccE$ with the irregular Hodge filtration $F^\irr_{\alpha+\bbullet}\ccE$ as defined in \S\ref{sec:irregHodgefiltration}, namely, we consider the case where $\ccN=\cO_\ccX(*\ccD)$ (notation of \S\ref{subsec:setting}) with its differential $\rd$ twisted by the exponential of the rational function $vf:X\times\PP^1_v=\ccX\ratto\PP^1$. The module $\cFfcN$ considered in \S\ref{subsec:irregV} is obtained here by gluing $\cE^{\tau vf/\hb}[*\ccH]$ (notation of Proposition \ref{prop:EXf}) in the $v$-chart with $\cE^{\tau f/u\hb}[*\ccH]$ in the $u$-chart, and we will regard these modules algebraically with respect to $\tau$, $(v{:}u)$ and $\hb$. We will use the notation introduced in \S\ref{sec:MTEf}.

\begin{theoreme}\label{th:FFirr}
For each $\alpha\in[0,1)$, we have
\begin{align*}
F_{\alpha+\bbullet}\ccE^{vf}(*\ccH)&=F^\irr_{\alpha+\bbullet}\ccE^{vf}(*\ccH),\\
F_{\alpha+\bbullet}G_0\ccE^{f/u}(*\ccH)&=F^\irr_{\alpha+\bbullet}\ccE^{f/u}(*\ccH)\cap G_0\ccE^{f/u}(*\ccH).
\end{align*}
\end{theoreme}

The proof of the theorem will be done in various steps. For the sake of simplicity, we will only treat the case where $H=\emptyset$.

$\bbullet$ Firstly, one identifies $\cE^{\tau vf/\hb}$ as a submodule of $\cO_X(*P_\red)[v,\tau,\hb]\cdot\nobreak\mathrm{e}^{\tau vf/\hb}$ and $\cE^{\tau f/u\hb}$ as a submodule of $\cO_X(*P_\red)[u,u^{-1},\tau,\hb]\cdot\nobreak\mathrm{e}^{\tau f/u\hb}$. According to Proposition \ref{prop:Etaufhb}, we may have a strict inclusion only near points of $P_\red\times\{v=0\}$ and points of $\{f=0\}\times\{u=0\}$. For the latter set, the computation is much simplified because we only consider the intersection with $G_0\ccE$. For the former set, we will need explicit computations of the $V$-filtration entering the very definition of $\cE^{\tau vf/\hb}$ in Proposition~\ref{prop:EXf}.

$\bbullet$ Secondly, one computes the terms $V_\alpha^\tau\cE^{\tau vf/\hb}$ (\resp $V_\alpha^\tau\cE^{\tau f/u\hb}$) of the $V$-filtration relative to $\tau=0$, in order to apply Proposition \ref{prop:FirrValphatau}. We will work analytically with respect to the variables of~$X$ and algebraically with respect to $\tau$, $(u:v)$ and $\hb$.

\subsection{Computation in the \texorpdfstring{$v$}{v}-chart}
We use the algebraic version (with respect to $v,\tau,\hb$) $R_F(\cD_X[v,\tau]\langle\partial_v,\partial_\tau\rangle)$ of $\cR_{\cX\times\Afu_v\times\Afu_\tau}$. Recall that $\cE^{\tau vf/\hb}$ is a coherent $R_F(\cD_X[v,\tau]\langle\partial_v,\partial_\tau\rangle)$-submodule of $\cO_X(*P_\red)[v,\tau]\cdot\mathrm{e}^{\tau vf/\hb}$. We will set $\mathbf{e}=\mathrm{e}^{\tau vf/\hb}$.

\subsubsection*{Computation away from \texorpdfstring{$P_\red$}{P}}
Since $\tau vf$ is holomorphic, we have the equality $\cE^{\tau vf/\hb}=\cO_{X\moins P_\red}[v,\tau]\cdot\mathrm{e}^{\tau vf/\hb}$. Then, from the relation $\partiall_\tau\mathbf{e}=vf\mathbf{e}$, we conclude that $\cE^{\tau vf/\hb}$ is already $(R_F\cD_X[v,\tau]\langle\partial_v\rangle)$-coherent, and hence the $V^\tau$-filtration is given by $V_k^\tau\cE^{\tau vf/\hb}=\tau^{\max(-k,0)}\cE^{\tau vf/\hb}$: by uniqueness of the $V^\tau$-filtration, it is enough to check the strictness of the $\gr_k^{V^\tau}\cE^{\tau vf/\hb}$, which is clear. Therefore, only $\alpha=0$ is \hbox{relevant}. In particular, $V_0^\tau\cE^{\tau vf/\hb}=\cE^{\tau vf/\hb}$. Hence, the quotient modulo $(\tau-\hb)\cE^{\tau vf/\hb}$ is equal to $E^{vf}[\hb]$.

On the other hand, we have $F_{\alpha+p}E^{vf}=E^{vf}$ for any $\alpha\in[0,1)$ and $p\geq0$, and $F_{\alpha-1}E^{vf}=0$, that is, $R_FE^{vf}=E^{vf}[\hb]$.

\subsubsection*{Computation in a neighbourhood of $P_\red$}
Near a point of $P_\red$, let us set $g=1/f$, which is holomorphic in a neighbourhood of this point. In local coordinates we have $g=x^{\bme}$.

\subsubsection*{First step: computation of $\cE^{\tau v/g\hb}$}
By the very definition of Proposition \ref{prop:EXf}\eqref{prop:EXf1} we have, on this neighbourhood, $\cE^{\tau v/g\hb}=(\cO_X(*P_\red)[\tau,v,\hb]\cdot\mathrm{e}^{\tau v/g\hb})[*P_\red]$. Let \hbox{$i_g:X\hto X\times\CC_{t'}$} denote the graph inclusion of $g$ and let \hbox{$p:X\times\CC_{t'}\to X$} denote the projection. Then, by definition of $[*P_\red]$, $i_{g,+}\cE^{\tau v/g\hb}$ is the $R_F(\cD_X[t',v,\tau]\langle\partial_{t'},\partial_v,\partial_\tau\rangle)$-submodule of $(i_{g,+}\cO_X(*P_\red)[v,\tau,\hb])\cdot\mathrm{e}^{\tau v/t'\hb}$ generated by~$V_1^{t'}$.

\begin{lemme}\label{lem:calculEtauvfhb}
The submodule $\cE^{\tau v/g\hb}$ is generated by $x^{-\bf1}\mathbf{e}$ as an $R_F(\cD_X[v,\tau]\langle\partial_v,\partial_\tau\rangle)$-module. Moreover,
\begin{starequation}\label{eq:calculEtauvfhb}
\cE^{\tau v/g\hb}\cap(\tau-\hb)\cO_X(*P_\red)[v,\tau,\hb]\cdot\mathbf{e}=(\tau-\hb)\cE^{\tau v/g\hb}.
\end{starequation}%
\end{lemme}

\begin{proof}
Our first task is to compute the $V^{t'}$-filtration of $(i_{g,+}\cO_X(*P_\red)[v,\tau,\hb])\cdot\nobreak\mathrm{e}^{\tau v/t'\hb}=\bigoplus_{k\geq0}\cO_X(*P_\red)[v,\tau,\hb](\partiall_{t'}^k\delta)\otimes\mathrm{e}^{\tau v/t'\hb}$. For $\alpha\in[0,1)$, let us set
\begin{align*}
(\delta\otimes\mathbf{e})_{1+\alpha}&:=x^{-[\alpha\bme]-\bf1}\delta\otimes\mathrm{e}^{\tau v/t'\hb}\\
(\delta\otimes\mathbf{e})_{<1+\alpha}&:=x^{-\lceil\alpha\bme\rceil}\delta\otimes\mathrm{e}^{\tau v/t'\hb}.
\end{align*}
Then $(\delta\otimes\mathbf{e})_{1+\alpha}$ satisfies the following equations:
\begin{equation}\label{eq:xdeltae}
\left\{
\begin{aligned}
\partiall_v(\delta\otimes\mathbf{e})_{1+\alpha}&=\frac{\tau}{g}(\delta\otimes\mathbf{e})_{1+\alpha}\\
\partiall_\tau(\delta\otimes\mathbf{e})_{1+\alpha}&=\frac{v}{g}(\delta\otimes\mathbf{e})_{1+\alpha}\\
\partiall_{t'}(\delta\otimes\mathbf{e})_{1+\alpha}&=x^{-[\alpha\bme]-\bf1}(\partiall_{t'}\delta)\otimes\mathrm{e}^{\tau v/t'\hb}-\frac{\tau v}{g^2}(\delta\otimes\mathbf{e})_{1+\alpha}\\
\partiall_{x_i}(\delta\otimes\mathbf{e})_{1+\alpha}&=-\frac{e_i}{x_i}\Big(\partiall_{t'}t'+\frac{(1+[\alpha e_i])\hb}{e_i}+t'\partiall_\tau\partiall_v\Big)(\delta\otimes\mathbf{e})_{1+\alpha}.
\end{aligned}\right.
\end{equation}
As a consequence we have
\[
t'\partiall_x^{\bme}(\delta\otimes\mathbf{e})_{1+\alpha}=(-\bme)^\bme\prod_i\prod_{j=1}^{e_i}\Big(t'\partiall_{t'}+\frac{(j+[\alpha e_i])\hb}{e_i}+t'\partiall_\tau\partiall_v\Big)(\delta\otimes\mathbf{e})_{1+\alpha}.
\]
Similarly for $(\delta\otimes\mathbf{e})_{<1+\alpha}$ the last line of \eqref{eq:xdeltae} reads
\[
\partiall_{x_i}(\delta\otimes\mathbf{e})_{<1+\alpha}=-\frac{e_i}{x_i}\Big(\partiall_{t'}t'+\frac{\lceil\alpha e_i\rceil\hb}{e_i}+t'\partiall_\tau\partiall_v\Big)(\delta\otimes\mathbf{e})_{<1+\alpha}
\]
and we have
\[
t'\partiall_x^{\bme}(\delta\otimes\mathbf{e})_{<1+\alpha}=(-\bme)^\bme\prod_i\prod_{j=0}^{e_i-1}\Big(t'\partiall_{t'}+\frac{(j+\lceil\alpha e_i\rceil)\hb}{e_i}+t'\partiall_\tau\partiall_v\Big)(\delta\otimes\mathbf{e})_{<1+\alpha}.
\]
We then easily deduce a Bernstein relation for $(\delta\otimes\mathbf{e})_{1+\alpha}$ and for $(\delta\otimes\mathbf{e})_{<1+\alpha}$, showing that $(\delta\otimes\mathbf{e})_{1+\alpha}$ belongs to $V^{t'}_{1+\alpha}(i_{g,+}\cO_X[v,\tau,\hb])\cdot\mathrm{e}^{\tau v/t'\hb}$ and $(\delta\otimes\mathbf{e})_{<1+\alpha}$ to $V^{t'}_{<1+\alpha}(i_{g,+}\cO_X[v,\tau,\hb])\cdot\mathrm{e}^{\tau v/t'\hb}$. We will now give an explicit expression of these modules.

We have
\begin{align*}
(i_{g,+}\cO_X[v,\tau,&\hb])\cdot\mathrm{e}^{\tau v/t'\hb}=\bigoplus_{k\geq0}\cO_X(*P_\red)[v,\tau,\hb](\partiall_{t'}^k\delta)\otimes\mathrm{e}^{\tau v/t'\hb}\\
&=\bigoplus_{k\geq0}\cO_X(*P_\red)[v,\tau,\hb]\partiall_{t'}^k(\delta\otimes\mathbf{e})_{1+\alpha}\quad \text{(third line of \eqref{eq:xdeltae})}\\
&=\bigoplus_{k\geq0}\cO_X(*P_\red)[v,\tau,\hb]\partiall_{t'}^kt'^k(\delta\otimes\mathbf{e})_{1+\alpha}\quad \text{($t'(\delta\otimes\mathbf{e})_{1+\alpha})=g(\delta\otimes\mathbf{e})_{1+\alpha})$}\\
&=\bigoplus_{k\geq0}\cO_X(*P_\red)[v',\tau',\hb](\partiall_{t'}t')^k(\delta\otimes\mathbf{e})_{1+\alpha}\ \text{(setting $v'=v/g$, $\tau'=\tau/g$)}\\
&\simeq\cO_X(*P_\red)[v',\tau',\eta,\hb]\quad\text{(setting $\eta = \partiall_{t'}t'$, and $(\delta\otimes\mathbf{e})_{1+\alpha}\mto1$)}.
\end{align*}
We have a similar identification by using $(\delta\otimes\mathbf{e})_{<1+\alpha}$. Let us write the last line of \eqref{eq:xdeltae} as
\[
(\partiall_{x_i}+e_ix^{\bme-{\bf1}_i}\partiall_v\partiall_\tau)(\delta\otimes\mathbf{e})_{1+\alpha}=-\frac{e_i}{x_i}\Big(\partiall_{t'}t'+\frac{(1+[\alpha e_i])\hb}{e_i}\Big)(\delta\otimes\mathbf{e})_{1+\alpha}.
\]
For $\bma\in\NN^\ell$ and $\alpha\in[0,1)$, let us set (with the convention that a product indexed by the empty set is equal to one):
\begin{equation}\label{eq:paalpha}
\begin{split}
p_{\bma,\alpha}(s,\hb)&=\prod_i\prod_{j=1}^{a_i}\Big(s+\frac{(j+[\alpha e_i])\hb}{e_i}\Big),\\
p_{\bma,<\alpha}(s,\hb)&=\prod_i\prod_{j=0}^{a_i-1}\Big(s+\frac{(j+\lceil\alpha e_i\rceil)\hb}{e_i}\Big).
\end{split}
\end{equation}
We then have
\begin{align*}
(\partiall_{x_i}+e_ix^{\bme-{\bf1}_i}\partiall_v\partiall_\tau)^{\bma}(\delta\otimes\mathbf{e})_{1+\alpha}&=(-\bme)^{\bma}x^{-\bma}p_{\bma,\alpha}(\partiall_t't',\hb)(\delta\otimes\mathbf{e})_{1+\alpha}\\[3pt]
(\partiall_{x_i}+e_ix^{\bme-{\bf1}_i}\partiall_v\partiall_\tau)^{\bma}(\delta\otimes\mathbf{e})_{<1+\alpha}&=(-\bme)^{\bma}x^{-\bma}p_{\bma,<\alpha}(\partiall_t't',\hb)(\delta\otimes\mathbf{e})_{<1+\alpha}.
\end{align*}
Let us set
\begin{align*}
U_{1+\alpha}&=\sum_{\bma\geq0}\cO_X[v',\tau',\eta,\hb]x^{-\bma}p_{\bma,\alpha}(\eta,\hb)\subset\cO_X(*P_\red)[v',\tau',\eta,\hb]\\
U_{<1+\alpha}&=\sum_{\bma\geq0}\cO_X[v',\tau',\eta,\hb]x^{-\bma}p_{\bma,<\alpha}(\eta,\hb)\subset\cO_X(*P_\red)[v',\tau',\eta,\hb].
\end{align*}
We thus have isomorphisms, by sending $\eta$ to $\partial_{t'}t'$:
\begin{align*}
U_{1+\alpha}&\xrightarrow[\sim]{\textstyle~{}\cdot(\delta\otimes\mathbf{e})_{1+\alpha}~}V^{t'}_0R_F(\cD_X[t',v,\tau]\langle\partial_{t'},\partial_v,\partial_\tau\rangle)\cdot(\delta\otimes\mathbf{e})_{1+\alpha}\\
U_{<1+\alpha}&\xrightarrow[\sim]{\textstyle~{}\cdot(\delta\otimes\mathbf{e})_{<1+\alpha}~}V^{t'}_0R_F(\cD_X[t',v,\tau]\langle\partial_{t'},\partial_v,\partial_\tau\rangle)\cdot(\delta\otimes\mathbf{e})_{<1+\alpha}\end{align*}
If we set $I(\bma)=\{i\mid a_i=0\}$ for $\bma\in\NN^\ell$ and if $I(\bma)^c=\{i\mid a_i\geq1\}$ denotes its complement in $\{1,\dots,\ell\}$, then every element in $\cO_X(*P_\red)[v',\tau',\eta,\hb]$ can be written in a unique way~as
\begin{equation}\label{eq:decompO}
\sum_{\bma\geq0}\wt h_{\bma}(x_{I(\bma)},v',\tau',\eta,\hb)x^{-\bma}
\end{equation}
with $\wt h_{\bma}(x_{I(\bma)},v',\tau',\eta,\hb)\in\CC\{x_{I(\bma)}\}[v',\tau',\eta,\hb]$. Since $p_{\bma,\alpha}$ divides $p_{\bma',\alpha}$ if $\bma'\geq\bma$, we deduce that each element of $U_{1+\alpha}$ can be written as
\begin{equation}\label{eq:decompUa}
\sum_{\bma\geq0}h_{\bma,\alpha}(x_{I(\bma)},v',\tau',\eta,\hb)x^{-\bma}p_{\bma,\alpha}(\eta,\hb),
\end{equation}
and the coefficient $\wt h_{\bma}$ of $x^{-\bma}$ in its decomposition \eqref{eq:decompO} is $h_{\bma,\alpha}(x_{I(\bma)},v',\tau',\eta,\hb)p_{\bma,\alpha}(\eta,\hb)$. By uniqueness, we conclude that an element written as \eqref{eq:decompO} belongs to $U_{1+\alpha}$ if and only if $p_{\bma,\alpha}(\eta,\hb)$ divides $\wt h_{\bma}(x_{I(\bma)},v',\tau',\eta,\hb)$. In particular, the decomposition \eqref{eq:decompUa} is unique.

We wish to identify $U_{1+\alpha}\cdot(\delta\otimes\mathbf{e})_{1+\alpha}$ with $V^{t'}_{1+\alpha}(i_{g,+}\cO_X(*P_\red)[v,\tau,\hb])\cdot\mathrm{e}^{\tau v/t'\hb}$ and $U_{<1+\alpha}\cdot(\delta\otimes\mathbf{e})_{<1+\alpha}$ with $V^{t'}_{<1+\alpha}(i_{g,+}\cO_X(*P_\red)[v,\tau,\hb])\cdot\mathrm{e}^{\tau v/t'\hb}$. It is enough to check
\[
(t'\partiall_{t'}+(1+\alpha)\hb)^mU_{1+\alpha}\cdot(\delta\otimes\mathbf{e})_{1+\alpha}\subset U_{<1+\alpha}\cdot(\delta\otimes\mathbf{e})_{<1+\alpha} \quad\text{for $m$ big enough}
\]
and
\[
U_{1+\alpha}\cdot(\delta\otimes\mathbf{e})_{1+\alpha}/U_{<1+\alpha}\cdot(\delta\otimes\mathbf{e})_{<1+\alpha}\quad\text{has no $\hb$-torsion}
\]
(\cf\cite[Lem.\,3.3.4 \& \S3.4.a]{Bibi01c}). For the first point, we set
\[
I_\alpha=\{i\mid\alpha e_i\in\ZZ\}\quad\text{and, for $\bma\geq0$,}\quad I_\alpha(\bma)=I_\alpha\cap I(\bma)\text{ and }I_\alpha(\bma)^c=I_\alpha\cap I(\bma)^c.
\]
Then $(\delta\otimes\mathbf{e})_{1+\alpha}=x^{-{\bf1}_{I_\alpha}}(\delta\otimes\mathbf{e})_{<1+\alpha}$, and we have the relation
\begin{align*}
\prod_{i\in I_\alpha}(\partiall_{x_i}-e_ix^{\bme-{\bf1}_i}\partiall_v\partiall_\tau)\cdot(\delta\otimes\mathbf{e})_{<1+\alpha}&=(-\bme)^{{\bf1}_{I_\alpha}}(\partiall_{t'}t'+\alpha\hb)^{\#I_\alpha}(\delta\otimes\mathbf{e})_{1+\alpha}\\
&=(-\bme)^{{\bf1}_{I_\alpha}}\big(t'\partiall_{t'}+(1+\alpha)\hb\big)^{\#I_\alpha}(\delta\otimes\mathbf{e})_{1+\alpha}.
\end{align*}
For the torsion-free assertion, let us consider a section \eqref{eq:decompUa} of $U_{1+\alpha}$ and let us decompose (in~a~unique~way) $h_{\bma,\alpha}(x_{I(\bma)},v',\tau',\eta,\hb)$ as
\[
h_{\bma,\alpha}(x_{I(\bma)},v',\tau',\eta,\hb)=\sum_{\epsilong\in\{0,1\}^{I_\alpha(\bma)}}h_{\bma,\alpha,\epsilong}(x_{I(\bma+{\bf1}_{I_\alpha}-\epsilong)},v',\tau',\eta,\hb)x^{\epsilong},
\]
where $h_{\bma,\alpha,\epsilong}$ is holomorphic in its $x$-variables and polynomial in $v',\tau',\eta,\hb$. Then the decomposition \eqref{eq:decompUa} reads
\[
\sum_{\bma\geq0}\sum_{\epsilong\in\{0,1\}^{I_\alpha(\bma)}}h_{\bma,\alpha,\epsilong}(x_{I(\bma+{\bf1}_{I_\alpha}-\epsilong)},v',\tau',\eta,\hb)x^{-(\bma-\epsilong)}p_{\bma,\alpha}(\eta,\hb).
\]
We now note that, for $\epsilong\in\{0,1\}^{I_\alpha(\bma)}$, setting $\bmb=\bma+{\bf1}_{I_\alpha}-\epsilong$, we have
\[
p_{\bmb,<\alpha}(\eta,\hb)=(\eta+\alpha\hb)^{\#I_\alpha(\bmb)^c}\cdot p_{\bma,\alpha}(\eta,\hb).
\]
The unique decomposition \eqref{eq:decompUa} can thus also be written uniquely as
\begin{equation}\label{eq:decompUab}
\sum_{\bmb\geq0}h'_{\bmb,\alpha}(x_{I(\bmb)},v',\tau',\eta,\hb) x^{-\bmb}\frac{p_{\bmb,<\alpha}(\eta,\hb)}{(\eta+\alpha\hb)^{\#I_\alpha(\bmb)^c}}\cdot x^{{\bf1}_{I_\alpha}},
\end{equation}
with $h'_{\bmb,\alpha}=h_{\bma,\alpha,\epsilong}$, where $(\bma,\epsilong)$ is defined by the following conditions:
\begin{align*}
a_i&=b_i\quad\text{if }i\notin I_\alpha,\\
a_i&=b_i-1\text{ and }\epsilon_i=0\quad\text{if }i\in I_\alpha\text{ and }b_i\geq1,\\
a_i&=0\text{ and }\epsilon_i=1\quad\text{if }i\in I_\alpha\text{ and }b_i=0.
\end{align*}
The condition that a section $\eqref{eq:decompUab}\cdot(\delta\otimes\mathbf{e})_{1+\alpha}=\eqref{eq:decompUab}\cdot x^{-{\bf1}_{I_\alpha}}(\delta\otimes\mathbf{e})_{<1+\alpha}$ belongs to $U_{<1+\alpha}\cdot(\delta\otimes\mathbf{e})_{<1+\alpha}$ now reads
\[
\forall b\geq0,\quad(\eta+\alpha\hb)^{\#I_\alpha(\bmb)^c}\text{ divides }h'_{\bmb,\alpha}(x_{I(\bmb)},v',\tau',\eta,\hb).
\]
It is therefore clear that a section of $U_{1+\alpha}\cdot(\delta\otimes\mathbf{e})_{1+\alpha}$ belongs, when multiplied by~$\hb$, to $U_{<1+\alpha}\cdot(\delta\otimes\mathbf{e})_{<1+\alpha}$ if and only if it already belongs to $U_{<1+\alpha}\cdot(\delta\otimes\mathbf{e})_{<1+\alpha}$. In other words, $U_{1+\alpha}\cdot(\delta\otimes\mathbf{e})_{1+\alpha}\big/U_{<1+\alpha}\cdot(\delta\otimes\mathbf{e})_{<1+\alpha}$ has no $\hb$-torsion.

We conclude that
\begin{align*}
V_0^{t'}R_F(\cD_X[t',v,\tau]\langle\partial_{t'},\partial_v,\partial_\tau\rangle)\cdot(\delta\otimes\mathbf{e})_1&=U_1\cdot(\delta\otimes\mathbf{e})_1\\
&=V_1^{t'}(i_{g,+}\cO_X(*P_\red)[v,\tau,\hb])\cdot(\delta\otimes\mathbf{e})_1,
\end{align*}
hence $i_{g,+}\cE^{\tau v/g\hb}$ is generated by $(\delta\otimes\mathbf{e})_1$. It follows that $\cE^{\tau v/g\hb}$ is generated by~$x^{-\bf1}\mathbf{e}$.

We will prove the analogue of \eqref{eq:calculEtauvfhb} after $i_{g,+}$, from which one deduces similarly \eqref{eq:calculEtauvfhb}. We first notice that the equality
\[
V^{t'}_1(i_{g,+}\cE^{\tau v/g\hb})\cap(\tau-\hb)(i_{g,+}\cO_X(*P_\red)[v,\tau,\hb])\cdot\mathrm{e}^{\tau v/t'\hb}=(\tau-\hb)V^{t'}_1(i_{g,+}\cE^{\tau v/g\hb})
\]
immediately follows from the unique decomposition \eqref{eq:decompUa} of a local section of $V^{t'}_1(i_{g,+}\cE^{\tau v/g\hb})$. To end the proof, it therefore suffices to produce a similar unique decomposition of local sections of $V^{t'}_{1+k}(i_{g,+}\cE^{\tau v/g\hb}):=\sum_{j=0}^k\partiall_{t'}^jV^{t'}_1(i_{g,+}\cE^{\tau v/g\hb})$ for any $k\geq1$. This is obtained by writing
\[
\partiall^k_{t'}(\delta\otimes\mathbf{e})_1=x^{-k\bme}\partiall^k_{t'}t^{\prime k}(\delta\otimes\mathbf{e})_1=x^{-k\bme}\prod_{j=0}^{k-1}(\partiall_{t'}t'+j\hb)(\delta\otimes\mathbf{e})_1,
\]
giving rise to a formula similar to \eqref{eq:decompUa} for sections of $V^{t'}_{1+k}(i_{g,+}\cE^{\tau v/g\hb})$, which makes use of polynomials $p_{\bma,k}$ ($k\geq1$), derived from $p_{\bma,0}$ like in \cite[Lem.\,4.7]{Bibi96a}.
\end{proof}

\subsubsection*{Second step: computation of the $V^\tau$-filtration of $\cE^{\tau v/g\hb}$}
For $\alpha\in[0,1)$, let us set $\mathbf{e}_\alpha=\mathrm{e}^{\tau v/g\hb}/x^{[\alpha\bme]+{\bf1}}$.

\begin{lemme}\label{lem:Valphatauv}
The $V_\bbullet^\tau$-filtration of $\cE^{\tau v/g\hb}$ satisfies
\[
V_\alpha^\tau\cE^{\tau v/g\hb}=V_0^\tau R_F(\cD_X[v,\tau]\langle\partial_v,\partial_\tau\rangle)\cdot\mathbf{e}_\alpha\quad\forall\alpha\in[0,1).
\]
\end{lemme}

\begin{proof}
Since we are only interested in giving the formula for $V_\alpha^\tau\cE^{\tau v/g\hb}$, we can as well work with the localized module $\cE^{\tau v/g\hb}[\tau^{-1}]$ (see \cite[Lem.\,3.4.1]{Bibi01c}). In such a way, we can write
\[
\mathbf{e}_\alpha=x^{-[\alpha\bme]}(x^{-\bf1}\mathbf{e})=x^{\lceil(1-\alpha)\bme\rceil}\tau^{-1}\partiall_v(x^{-\bf1}\mathbf{e}),
\]
showing that $\mathbf{e}_\alpha$ is a section of $\cE^{\tau v/g\hb}[\tau^{-1}]$. For $\alpha\in[0,1)$ let us also set
\[
\mathbf{e}_{<\alpha}=\Big(\prod_{i\in I_\alpha}x_i\Big)\mathbf{e}_\alpha=:x^{{\bf1}_{I_\alpha}}\mathbf{e}_\alpha
\]
and, for $p\in\ZZ$,
\begin{align*}
U_{\alpha+p}^\tau(\cE^{\tau v/g\hb}[\tau^{-1}])&=\tau^{-p}V_0^\tau R_F(\cD_X[v,\tau]\langle\partial_v,\partial_\tau\rangle)\cdot\mathbf{e}_\alpha\\
U_{<\alpha+p}^\tau(\cE^{\tau v/g\hb}[\tau^{-1}])&=\tau^{-p}V_0^\tau R_F(\cD_X[v,\tau]\langle\partial_v,\partial_\tau\rangle)\cdot\mathbf{e}_{<\alpha},
\end{align*}
so that, clearly,
\begin{equation}\label{eq:UU<}
U_{<\alpha+p}^\tau(\cE^{\tau v/g\hb}[\tau^{-1}])\subset U_{\alpha+p}^\tau(\cE^{\tau v/g\hb}[\tau^{-1}]).
\end{equation}
For $p\leq0$, we will set $U_{\alpha+p}^\tau\cE^{\tau v/g\hb}=U_{\alpha+p}^\tau(\cE^{\tau v/g\hb}[\tau^{-1}])$ and $U_{<\alpha+p}^\tau\cE^{\tau v/g\hb}=U_{<\alpha+p}^\tau(\cE^{\tau v/g\hb}[\tau^{-1}])$. We will prove that $U^\tau_\bbullet(\cE^{\tau v/g\hb}[\tau^{-1}])$ is the good $V^\tau$-filtration of $\cE^{\tau v/g\hb}[\tau^{-1}]$. It is enough to prove that $U_\alpha^\tau\cE^{\tau v/g\hb}=V_\alpha^\tau\cE^{\tau v/g\hb}$ for $\alpha\in[0,1)$. The proof will be very similar to that of Lemma \ref{lem:calculEtauvfhb}, although with the variable $\tau$ instead of the variable $t'$.

By using \eqref{eq:UU<}, one first easily checks that $U_{\alpha-1}^\tau\cE^{\tau v/g\hb}\subset U_{<\alpha}^\tau\cE^{\tau v/g\hb}$ and
\[
(\tau\partiall_\tau+\alpha\hb)^{\# I_\alpha}U_\alpha^\tau\cE^{\tau v/g\hb}\subset U_{<\alpha}^\tau\cE^{\tau v/g\hb}.
\]
Indeed, the first point follows from the relation $\tau\mathbf{e}_\alpha=x^{\bme}\partiall_v\mathbf{e}_\alpha=x^{\bme-{\bf1}_{I_\alpha}}\partiall_v\mathbf{e}_{<\alpha}$, and the second one follows from the relation
\[
(\tau\partiall_\tau+\alpha\hb)^{\# I_\alpha}\mathbf{e}_\alpha=\Big(\frac{(-1)^{\#I_\alpha}}{\prod_{i\in I_\alpha}e_i}\Big)\prod_{i\in I_\alpha}\partiall_{x_i}\cdot\mathbf{e}_{<\alpha}.
\]
Due to the uniqueness of the $V^\tau$-filtration, the assertion of the lemma would follow from the property that $\gr_\alpha^{U^\tau}\cE^{\tau v/g\hb}$ has no $\hb$-torsion. We will argue in a way similar to that of Lemma \ref{lem:calculEtauvfhb} by finding a suitable expression for the sections of $U_\alpha^\tau\cE^{\tau v/g\hb}$.

Let us decompose $\cO_X[x^{-1},\hb][v,\tau]$ as $\cO_X[x^{-1},\hb][v,v\tau]\oplus\tau\cO_X[x^{-1},\hb][v\tau,\tau]$. Due to the relation $v\tau\mathbf{e}=x^{\bme}\tau\partiall_\tau\mathbf{e}$, we have isomorphisms of $\cO_X[x^{-1},\hb]$-modules
\begin{equation}\label{eq:isomathbfe}
\begin{split}
\cO_X[x^{-1},\hb][v,v\tau]\cdot\mathbf{e}&\isom\cO_X[x^{-1},\hb,v][\tau\partiall_\tau]\cdot\mathbf{e}\\
\cO_X[x^{-1},\hb][v\tau,\tau]\cdot\mathbf{e}&\isom \cO_X[x^{-1},\hb,\tau]\langle\tau\partiall_\tau\rangle\cdot\mathbf{e}
\end{split}
\end{equation}
given respectively by
\[
v^j(v\tau)^k\mathbf{e}\mto
x^{k\bme}v^j\prod_{i=0}^{k-1}\big(\tau\partiall_\tau-i\hb\big)\mathbf{e},\quad
(v\tau)^j\tau^k\mto x^{j\bme}\tau^k\prod_{i=0}^{j-1}\big(\tau\partiall_\tau-i\hb\big)\mathbf{e}.
\]
We thus obtain an isomorphism of free $\cO_X[x^{-1},\hb]$-modules:
\[
\cO_X[x^{-1},\hb][v,\tau]\cdot\mathbf{e}\isom\big(\cO_X[x^{-1},\hb,v][\tau\partiall_\tau]\oplus\tau\cO_X[x^{-1},\hb,\tau]\langle\tau\partiall_\tau\rangle\big)\cdot\mathbf{e}.
\]
We can replace $\mathbf{e}$ with $\mathbf{e}_\alpha$ or $\mathbf{e}_{<\alpha}$ in the above isomorphism. We will express $U_\alpha^\tau\cE^{\tau v/g\hb}$ and $U_{<\alpha}^\tau\cE^{\tau v/g\hb}$ as sub-$\cO_X[\hb]$-modules of the right-hand side, and by using the generator $\mathbf{e}_{<\alpha}$ in both cases, to make the computation of the quotient module easier.

We note first that $U_\alpha^\tau\cE^{\tau v/g\hb}=\cO_X[v,\hb]\langle\partiall_x,\partiall_v,\tau\partiall_\tau\rangle\cdot\mathbf{e}_\alpha$, \ie we can forget the action of $\tau$, since $\tau^k\mathbf{e}_\alpha=x^{k\bme}\partiall_v^k\mathbf{e}_\alpha$. We have a similar assertion for $U_{<\alpha}^\tau\cE^{\tau v/g\hb}$.

From the relation
\[
v^\ell\partiall_x^{\bma}\partiall_v^k(\tau\partiall_\tau)^j\mathbf{e}_\alpha=
\begin{cases}
{}\star x^{-(\bma+(k-\ell)\bme)}\tau^{k-\ell} p_{\bma,\alpha}(\tau\partiall_\tau+(k-\ell)\hb,\hb)\cdot{}\\
\hspace*{1.5cm}{}\cdot(\tau\partiall_\tau+(k-\ell)\hb)^j\prod_{i=0}^{\ell-1}(\tau\partiall_\tau-i\hb)\mathbf{e}_\alpha
&\text{if }k>\ell\geq0,\\[5pt]
{}\star x^{-\bma}v^{\ell-k}p_{\bma,\alpha}(\tau\partiall_\tau,\hb)(\tau\partiall_\tau)^j\prod_{i=0}^{k-1}(\tau\partiall_\tau-i\hb)\mathbf{e}_\alpha&\text{if }0\leq k\leq\ell,
\end{cases}
\]
for some nonzero constants $\star$ and with $p_{\bma,\alpha}(s,\hb)$ defined by \eqref{eq:paalpha}, we conclude that, through the isomorphism \eqref{eq:isomathbfe},
\begin{multline*}
U_\alpha^\tau\cE^{\tau v/g\hb}=\sum_{\bma\geq0}\cO_X[\hb][v,\tau\partiall_\tau]\cdot x^{-\bma}p_{\bma,\alpha}(\tau\partiall_\tau,\hb)\mathbf{e}_\alpha\\[-5pt]
{}+\sum_{\bma\geq0}\sum_{n>0}\tau^n\cO_X[\hb][\tau\partiall_\tau]x^{-(\bma+n\bme)}p_{\bma,\alpha}(\tau\partiall_\tau+n\hb,\hb)\mathbf{e}_\alpha
\end{multline*}

Formula \eqref{eq:paalpha} shows that, if $\bma\geq\bma'\geq0$, then $p_{\bma',\alpha}$ divides $p_{\bma,\alpha}$. It follows that any section of $U_\alpha^\tau\cE^{\tau v/g\hb}$ can be written as
\begin{multline}\label{eq:sectionUalpha}
\sum_{\bma\geq0}h_{\bma,\alpha}(x_{I(\bma)},v,\tau\partiall_\tau,\hb)x^{-\bma}p_{\bma,\alpha}(\tau\partiall_\tau,\hb)\cdot\mathbf{e}_\alpha\\[-5pt]
{}+\sum_{n>0}\tau^n\sum_{\bma\geq0}g_{\bma,\alpha,n}(x_{I(\bma+n\bme)},\tau\partiall_\tau,\hb)x^{-(\bma+n\bme)}p_{\bma,\alpha}(\tau\partiall_\tau+n\hb,\hb)\cdot\mathbf{e}_\alpha
\end{multline}
with $h_{\bma,\alpha}$ holomorphic in its $x$-variables and polynomial in $v,\tau\partiall_\tau,\hb$, and $g_{\bma,\alpha,n}$ holomorphic in its $x$-variables and polynomial in $\tau\partiall_\tau,\hb$.

Let us check that the decomposition \eqref{eq:sectionUalpha} is unique. The coefficient $h^{(n)}$~of~$\tau^n$~(\hbox{$n\!\geq\!0$}) is uniquely determined by the section. If $n=0$, the function $h^{(0)}\in\nobreak\cO_{X,0}[x^{-1},v,\eta,\hb]$ decomposes uniquely as $\sum_{\bma\geq0}h^{(0)}_{\bma}(x_{I(\bma)},v,\eta,\hb)x^{-\bma}$. Thus~$h^{(0)}_{\bma}$ must be divisible by $p_{\bma,\alpha}(\eta,\hb)$ and this determines uniquely $h_{\bma,\alpha}(x_{I(\bma)},v,\eta,\hb)$. We argue similarly for $n>0$ and $h^{(n)}\in\cO_{X,0}[x^{-1},\eta,\hb]$.

There is a similar decomposition for sections of $U_{<\alpha}^\tau\cE^{\tau v/g\hb}$, by replacing $p_{\bma,\alpha}(\tau\partiall_\tau,\hb)\cdot\mathbf{e}_\alpha$ with $p_{\bma,<\alpha}(\tau\partiall_\tau,\hb)\cdot\mathbf{e}_{<\alpha}$. In order to check whether a section \eqref{eq:sectionUalpha} belongs to $U_{<\alpha}^\tau\cE^{\tau v/g\hb}$, we replace $\mathbf{e}_\alpha$ with $x^{-{\bf1}_{I_\alpha}}\mathbf{e}_{<\alpha}$. Let us decompose (in~a~unique~way) $h_{\bma,\alpha}(x_{I(\bma)},v,\tau\partiall_\tau,\hb)$ as
\[
h_{\bma,\alpha}(x_{I(\bma)},v,\tau\partiall_\tau,\hb)=\sum_{\epsilong\in\{0,1\}^{I_\alpha(\bma)}}h_{\bma,\alpha,\epsilong}(x_{I(\bma+{\bf1}_{I_\alpha}-\epsilong)},v,\tau\partiall_\tau,\hb)x^{\epsilong},
\]
where $h_{\bma,\alpha,\epsilong}$ is holomorphic in its $x$-variables and polynomial in $v,\tau\partiall_\tau,\hb$. We have a similar decomposition for $g_{\bma,\alpha,n}(x_{I(\bma+n\bme)},\tau\partiall_\tau,\hb)$. Then the decomposition \eqref{eq:sectionUalpha} reads
\bgroup
\multlinegap0pt
\begin{multline*}
\sum_{\bma\geq0}\sum_{\epsilong\in\{0,1\}^{I_\alpha(\bma)}}\hspace*{-5mm}h_{\bma,\alpha,\epsilong}(x_{I(\bma+{\bf1}_{I_\alpha}-\epsilong)},v,\tau\partiall_\tau,\hb)x^{-(\bma+{\bf1}_{I_\alpha}-\epsilong)}p_{\bma,\alpha}(\tau\partiall_\tau,\hb)\cdot\mathbf{e}_{<\alpha}\\
{}+\sum_{\bma\geq0}\sum_{n>0}\sum_{\epsilong\in\{0,1\}^{I_\alpha(\bma+n\bme)}}\hspace*{-5mm}\tau^ng_{\bma,\alpha,n}(x_{I(\bma+n\bme+{\bf1}_{I_\alpha}-\epsilong)},\tau\partiall_\tau,\hb)\cdot{}\\[-5pt]
{}\cdot x^{-(\bma+n\bme+{\bf1}_{I_\alpha}-\epsilong)}p_{\bma,\alpha}(\tau\partiall_\tau+n\hb,\hb)\cdot\mathbf{e}_{<\alpha}.
\end{multline*}
\egroup
We now note that, for $\epsilong\in\{0,1\}^{I_\alpha(\bma)}$, setting $\bmb=\bma+{\bf1}_{I_\alpha}-\epsilong$, we have
\[
p_{\bmb,<\alpha}(s,\hb)=(s+\alpha\hb)^{\#I_\alpha(\bmb)^c}\cdot p_{\bma,\alpha}(s,\hb).
\]
The unique decomposition \eqref{eq:sectionUalpha} can thus also be written uniquely as
\begin{multline}\label{eq:sectionUalphab}
\sum_{\bmb\geq0}h'_{\bmb,\alpha}(x_{I(\bmb)},v,\tau\partiall_\tau,\hb) x^{-\bmb}\frac{p_{\bmb,<\alpha}(\tau\partiall_\tau,\hb)}{(\tau\partiall_\tau +\alpha\hb)^{\#I_\alpha(\bmb)^c}}\cdot\mathbf{e}_{<\alpha}\\
+\sum_{b\geq0}\sum_{n>0}\tau^ng'_{\bmb,\alpha,n}(x_{I(\bmb+n\bme)},v,\tau\partiall_\tau,\hb) x^{-(\bmb+n\bme)}\frac{p_{\bmb,<\alpha}(\tau\partiall_\tau+n\hb,\hb)}{(\tau\partiall_\tau +(n+\alpha)\hb)^{\#I_\alpha(\bmb)^c}}\cdot\mathbf{e}_{<\alpha}
\end{multline}
with $h'_{\bmb,\alpha}=h_{\bma,\alpha,\epsilong}$, where $(\bma,\epsilong)$ is defined by the following conditions:
\begin{align*}
a_i&=b_i\quad\text{if }i\notin I_\alpha,\\
a_i&=b_i-1\text{ and }\epsilon_i=0\quad\text{if }i\in I_\alpha\text{ and }b_i\geq1,\\
a_i&=0\text{ and }\epsilon_i=1\quad\text{if }i\in I_\alpha\text{ and }b_i=0,
\end{align*}
and similarly for $g'_{\bmb,\alpha,n}$. The condition that a section \eqref{eq:sectionUalphab} belongs to $U_{<\alpha}^\tau\cE^{\tau v/g\hb}$ now reads
\begin{align*}
\forall b\geq0,\quad(\tau\partiall_\tau +\alpha\hb)^{\#I_\alpha(\bmb)^c}&\text{ divides }h'_{\bmb,\alpha}(x_{I(\bmb)},v,\tau\partiall_\tau,\hb),\\
\tag*{and}
\forall b\geq0,\forall n>0,\quad(\tau\partiall_\tau +(n+\alpha)\hb)^{\#I_\alpha(\bmb)^c}&\text{ divides }g'_{\bmb,\alpha,n}(x_{I(\bmb+n\bme)},\tau\partiall_\tau,\hb).
\end{align*}
It is therefore clear that a section \eqref{eq:sectionUalphab} of $U_\alpha^\tau\cE^{\tau v/g\hb}$ belongs, when multiplied by~$\hb$, to $U_{<\alpha}^\tau\cE^{\tau v/g\hb}$ if and only if it already belongs to $U_{<\alpha}^\tau\cE^{\tau v/g\hb}$. In other words, $\gr_\alpha^{U^\tau}\cE^{\tau v/g\hb}$ has no $\hb$-torsion.
\end{proof}

\subsubsection*{Third step: End the proof of Theorem \ref{th:FFirr} in the $v$-chart}
We will now use the expression of Lemma \ref{lem:Valphatauv} to regard $V_\alpha^\tau\cE^{\tau v/g\hb}$ as an $\cO_X[v,\tau,\hb]$-submodule of $\cO_X[x^{-1},v,\tau,\hb]\cdot\nobreak\mathbf{e}_\alpha$. We can write (locally on~$X$ near $P$):
\[
V_0^\tau(R_F\cD_X[v,\tau]\langle\partial_v,\partial_\tau\rangle)=\cO_X[v,\tau,\hb]\langle\partiall_{x},\partiall_{y'},\partiall_v,\tau\partiall_\tau\rangle
\]
and we notice that the action of $\tau\partiall_\tau$ on $\mathbf{e}_\alpha$ is equal to that of $v\partiall_v$, so we can forget~$\tau\partiall_\tau$. We will also forget $(y',\partiall_{y'})$ which plays no significant role. Recall that the variables~$x$ are indexed as $x_1,\dots,x_\ell$. Working now within $\cO_X[x^{-1},v,\tau,\hb]\cdot\mathbf{e}_\alpha$, we have by induction on~$|\bma|$,
\[
(x\partiall_x)^{\bma}\partiall_v^c\mathbf{e}_\alpha\equiv\hb^{|\bma|+c}x^{-c\bme}\bigg(\sum_{j=0}^{|\bma|}q_{\bma,c,j}(x)x^{-j\bme}v^j\bigg)\mathbf{e}_\alpha\mod(\tau-\hb)\cO_X[x^{-1},v,\tau,\hb],
\]
where $q_{\bma,c,j}(x)$ is some polynomial and $q_{\bma,c,|\bma|}(x)$ is a nonzero constant. From this one concludes that there exist polynomials $r_{\bma,c,j}(x,\hb)$ with $r_{\bma,c,|\bma|}(x,\hb)$ constant, such that
\begin{multline*}
\partiall_x^{\bma}\partiall_v^c\mathbf{e}_\alpha\equiv\hb^{|\bma|+c}x^{-(|\bma|+c)\bme}\bigg(\sum_{j=0}^{|\bma|}r_{\bma,c,j}(x,\hb)x^{(|\bma|-j)\bme-\bma}v^j\bigg)x^{-[\alpha\bme]-{\bf1}}\mathbf{e}\\[-5pt]\mod(\tau-\hb)\cO_X[x^{-1},v,\tau,\hb],
\end{multline*}
and since for $0\leq j\leq|\bma|$ we have $|\bma-(|\bma|-j)\bme|_+\leq j$ (see the end of the proof of Lemma \ref{lem:FF'v}), the coefficient of $v^j$ belongs to $F_j\cO_X(*P_\red)([(\alpha+p)P])$ with $p=|\bma|+c$. Using that
\begin{align*}
(\tau-\hb)V_\alpha^\tau\cE^{\tau v/g\hb}&=(\tau-\hb)\cE^{\tau v/g\hb}\cap V_\alpha^\tau\cE^{\tau v/g\hb}\quad \text{(\cf \cite[Proof of Prop.\,3.1.2]{E-S-Y13})}\\
&= (\tau-\hb)\cO_X[x^{-1},v,\tau,\hb]\cap V_\alpha^\tau\cE^{\tau v/g\hb}\quad \text{(Lemma \ref{lem:calculEtauvfhb})},
\end{align*}
we conclude that the coefficient of $\hb^p$ in the $\gr\big(V_\alpha^\tau\cE^{\tau v/g\hb}/(\tau-\hb)V_\alpha^\tau\cE^{\tau v/g\hb}\big)$ (graded with respect to the $\hb$-adic filtration) is contained in $F_{\alpha+p}E^{vf}$, so $F^\irr_{\alpha+p}E^{vf}\subset F_{\alpha+p}E^{vf}$, according to Remark \ref{rem:FirrValphatau}.

In order to obtain the reverse inclusion, we remark that, for $|\bma|+c=p$ fixed, $x^{-\bma}v^{|\bma|}\mathrm{e}^{vf}/x^{[(\alpha+p)\bme]+\bf1}\in F_{|\bma|}\cO_X(*P_\red)([(\alpha+p)P])v^{|\bma|}\mathrm{e}^{vf}$ is equal, up to a nonzero constant and modulo $\sum_{j<|\bma|}F_j\cO_X(*P_\red)([(\alpha+p)P])v^j\mathrm{e}^{vf}$, to the class of $\partiall_x^{\bma}\partiall_v^c\mathbf{e}_\alpha$. We conclude by induction on $|\bma|$, the case where $|\bma|=0$ being clear.\qed

\Subsection{Computation in the \texorpdfstring{$u$}{u}-chart}\label{subsec:computuchart}

\subsubsection*{Computation away from $P_\red$}
We have $F_{\alpha+p}G_0E^{f/u}=G_0E^{f/u}=\cO_{X\moins P}[u]\mathrm{e}^{f/u}$ for $p\geq0$. We set similarly $\mathbf{e}=\mathrm{e}^{\tau f/u\hb}$.

\begin{lemme}
With respect to the inclusion $\cE^{\tau f/u\hb}\subset\cO_{X\moins P_\red}[u,u^{-1},\tau,\hb]\cdot\mathbf{e}$, we have $\mathbf{e}\in\cE^{\tau f/u\hb}$.
\end{lemme}

\begin{proof}
The question is local near a point of $\{f=0\}$, since otherwise we have equality in the previous inclusion, according to Proposition \ref{prop:Etaufhb}, and it amounts to proving that $\mathbf{e}\in V_1^u(\cO_{X\moins P_\red}[u,u^{-1},\tau,\hb]\cdot\mathbf{e})$, so we are reduced to computing the order of~$\mathbf{e}$ with respect to the $V^u$-filtration.

Let us first assume that the divisor $\{f=0\}$ has normal crossings. Let us choose local coordinates $x_1,\dots,x_n$ such that $f(x)=x^{\bmm}$ with $\bmm\in\NN^n$ (a local setting not to be confused with that of \S\ref{subsec:setting}). From the relation $u\partiall_u\mathbf{e}=-(\tau f/u)\mathbf{e}$ we obtain
\[
\partiall_{x_i}(f\mathbf{e})=\frac{m_i\hb}{x_i}\,x^{\bmm}\mathbf{e}+\frac{m_i}{x_i}\,\frac{\tau f}{u}\,x^{\bmm}\mathbf{e}=-m_i(u\partiall_u-\hb)x^{\bmm-{\bf1}_i}\mathbf{e},
\]
and iterating the process we find
\[
\partiall_x^{\bmm}(f\mathbf{e})=(-\bmm)^{\bmm}\prod_{i=1}^n\prod_{j=1}^{m_i}(u\partiall_u-j\hb/m_i)\cdot\mathbf{e}.
\]
Since $f\mathbf{e}=u\partiall_\tau\mathbf{e}$, this gives a Bernstein relation for $\mathbf{e}$ showing that $\mathbf{e}\in V_{<0}^u(\cO_{X\moins P_\red}[u,u^{-1},\tau,\hb]\cdot\mathbf{e})$.

When $\{f=0\}$ is arbitrary, the proof proceeds exactly like in \cite{Kashiwara76}. We work locally near a point of $\{f=0\}$ and we choose a projective birational morphism $\pi:X'\to X$ which is an isomorphism away from $\{f=0\}$ and such that $f':=f\circ\pi$ defines a normal crossing divisor. Using the global section $\mathrm{e}^{\tau f'/u\hb}$ of $V_{<0}^u\cE^{\tau f'/u\hb}$ (first part of the proof), one constructs a global section $\mathbf{e}'$ of $\cH^0\pi_+V_{<0}^u\cE^{\tau f'/u\hb}$ which coincides with $\mathbf{e}$ away from $\{f=0\}$. This is done by using the global section ${\bf1}_{\cX\from\cX'}$ of $\cR_{\cX\from\cX'}[u,\tau]\langle u\partiall_u\rangle$. Because $\cE^{\tau f'/u\hb}$ underlies a mixed twistor module, $\cH^0\pi_+\cE^{\tau f'/u\hb}$ is strictly specializable along $u=0$ and we have $V_{<0}^u\cH^0\pi_+\cE^{\tau f'/u\hb}=\cH^0\pi_+V_{<0}^u\cE^{\tau f'/u\hb}$. Therefore, $\mathbf{e}'$ is a section of $V_{<0}^u\cH^0\pi_+\cE^{\tau f'/u\hb}$, and thus satisfies a non-trivial Bernstein equation of the form
\[
\prod_{\beta<0}(u\partiall_u+\beta\hb)^{\nu_\beta}\cdot\mathbf{e}'=uP(x,u,\partiall_x,\partiall_\tau,u\partiall_u)\cdot\mathbf{e}'.
\]
We conclude that $\mathbf{e}$ satisfies the same equation away from $\{f=0\}$, hence everywhere, since $\cO_X[u,u^{-1},\tau,\hb]$ has no $\cO_X$-torsion.
\end{proof}

Due to the relation $f\tau\partiall_u\mathbf{e}=-\tau^2\partiall_\tau^2\mathbf{e}$ we conclude that
\[
V_0^\tau\cE^{\tau f/u\hb}\supset V_0^\tau R_F(\cD_{(X\moins P)}[u,\tau]\langle\partial_u,\partial_\tau\rangle)\cdot\mathbf{e}\supset \cO_{X\moins P}[u,\tau,\hb]\cdot\mathbf{e}.
\]
Then, computing modulo $(\tau-\hb)\cE^{\tau f/u\hb}$, $F_{\alpha+p}^\irr E^{f/u}$ contains $\cO_{X\moins P}[u,\hb]\cdot\mathrm{e}^{f/u}=G_0E^{f/u}$, hence $F_{\alpha+p}G_0E^{f/u}=F^\irr_{\alpha+p}G_0E^{f/u}$ away from $P_\red$.

\begin{remarque}\label{rem:FirrEu}
The explicit computation of $F_{\alpha+p}^\irr E^{f/u}$ in the neighbourhood of $f=\nobreak0$ would be more complicated, and restricting to $G_0E^{f/u}$ allows us to avoid this computation. Let us however note that, in the neighbourhood of the smooth locus of $f^{-1}(0)$, an explicit formula for $F_{\alpha+p}^\irr E^{f/u}$ can be obtained from Lemma \ref{lem:Valphatauv} by setting there $g=u$ and $v=f$. Since the order of the pole at $u=0$ is one, the only interesting $\alpha$ is zero, and the result is:
\[
F_p^\irr E^{f/u}=\frac{1}{u^{p+1}}\Big(\sum_{j=0}^p\cO_X[u]\frac{f^k}{u^k}\Big)\cdot\mathrm{e}^{f/u}\quad \text{($f$ smooth)}.
\]
This formula extends in a natural way to $F_{\alpha+p}^\irr E^{f/u}(*H)$, provided that moreover $f^{-1}(0)$ has no common component with $H$ and that $f^{-1}(0)\cup H$ has normal crossings.
\end{remarque}

\subsubsection*{Computation near \texorpdfstring{$P_\red$}{P}}

The computation is similar to, and even simpler than, the computation done in the $v$-chart. Indeed, due to Proposition \ref{prop:Etaufhb}, we have $\cE^{\tau/ux^{\bme}\hb}=\cO_X(*P_\red)[u,u^{-1},\tau,\hb]\cdot\mathrm{e}^{\tau/ux^{\bme}\hb}$, and there is no need for an analogue of Lemma \ref{lem:calculEtauvfhb}. We will consider the variable $u$ as part of the $x$-variables, and the divisor $u=0$ of~$\ccX$ (\cf \S\ref{subsec:rescaling}). For $\alpha\in[0,1)$, we set $\mathbf{e}_\alpha=\mathrm{e}^{\tau/u x^{\bme}\hb}/ux^{[\alpha\bme]+\bf1}$. The following lemma is similar to Lemma~\ref{lem:Valphatauv}.

\begin{lemme}\label{lem:Valphatauu}
The $V_\bbullet^\tau$-filtration of $\cE^{\tau/ux^{\bme}\hb}$ satisfies
$$
V_\alpha^\tau\cE^{\tau/ux^{\bme}\hb}=V_0^\tau R_F(\cD_X[u,\tau]\langle\partial_u,\partial_\tau\rangle)\cdot\mathbf{e}_\alpha\quad\forall\alpha\in[0,1).\eqno\qed
$$
\end{lemme}

We also obtain
\begin{multline*}
\partiall_x^{\bma}\partiall_u^b\mathbf{e}_\alpha\in \hb^{|\bma|+b}F_{|\bma|+b}\big(\cO_X(*P_\red)[u,u^{-1}]\big)\big([(\alpha+|\bma|+b)\ccP]\big)\cdot\mathrm{e}^{\tau/ux^{\bme}\hb}\\
\mod(\tau-\hb)\cE^{\tau/ux^{\bme}\hb},
\end{multline*}
where $F_\bbullet\big(\cO_X(*P_\red)[u,u^{-1}]\big)$ is the filtration by the order of the pole along $\ccP_\red$. Moreover, the coefficient of $(ux^{\bme})^{-(|\bma|+b)}\cdot (ux^{[\alpha\bme]+\bf1})^{-1}\cdot\mathrm{e}^{\tau/ux^{\bme}\hb}$ is a nonzero constant. It follows that
\[
F_{\alpha+p}^\irr E^{1/ux^{\bme}}=F_p\big(\cO_X(*P_\red)[u,u^{-1}]\big)\big([(\alpha+p)\ccP]\big)\cdot\mathrm{e}^{1/ux^{\bme}}.
\]
Intersecting with $G_0E^{1/ux^{\bme}}=\cO_X(*P_\red)[u]\mathrm{e}^{1/ux^{\bme}}$ gives
\[
F_{\alpha+p}^\irr E^{1/ux^{\bme}}\cap G_0E^{1/ux^{\bme}}=F_p\cO_X(*P_\red)\big([(\alpha+p)P]\big)[u]\cdot\mathrm{e}^{1/ux^{\bme}}=F_{\alpha+p}G_0E^{1/ux^{\bme}}.
\]

This ends the proof of Theorem \ref{th:FFirr}.\qed

\subsection{Another approach of Theorem \ref{th:FFirr} at the de~Rham level}
Let us assume that the zero divisor $f^{-1}(0)$ of $f:X\to\PP^1$ is smooth, that it has no component in common with $D$, and that $f^{-1}(0)\cup D$ still has normal crossings. We have a filtration $F_{\alpha+\bbullet}E$ (by using the formula given in Remark \ref{rem:FirrEu} in the $u$-chart). Then the proof of Theorem \ref{th:FFirr} gives in fact the equality $F_{\alpha+\bbullet}E=F^\irr_{\alpha+\bbullet}E$.

Let $\pi:\wt\ccX\to\ccX$ be a projective birational morphism such that $vf$ extend as a morphism $\wt{vf}:\wt\ccX\to\PP^1$ and $\wt\ccD:=\pi^{-1}(\ccD)$ is a normal crossing divisor in $\wt\ccX$. The pole divisor of $vf$ is $\ccP_\red$ and that of~$\wt{vf}$, that we denote by $\wt\ccP_\red$, is contained in $\pi^{-1}(\ccP_\red)$. We denote by $\wt\ccH$ the remaining components of $\wt\ccD$. The construction of \cite{E-S-Y13} produces a filtration $F_{\alpha+\bbullet}^\Del\ccE^{\wt{vf}}(*\wt\ccH)$. Note that
\[
\pi_+\ccE^{\wt{vf}}(*\wt\ccH)=\cH^0\pi_+\ccE^{\wt{vf}}(*\wt\ccH)=\ccE^{(v:u)f}(*\ccH)=:\ccE.
\]
By Theorem \ref{th:1main}, the push-forward $\pi_+\big(\ccE^{\wt{vf}}(*\wt\ccH),F_{\alpha+\bbullet}^\Del\ccE^{\wt{vf}}(*\wt\ccH)\big)$ is strict, since $F_{\alpha+\bbullet}^\Del\ccE^{\wt{vf}}(*\wt\ccH)=F_{\alpha+\bbullet}^\irr\ccE^{\wt{vf}}(*\wt\ccH)$, and produces the filtration $F_{\alpha+\bbullet}^\irr\ccE$, which is nothing but $F_{\alpha+\bbullet}\ccE$ by Theorem \ref{th:FFirr} in the present setting. The strictness of the push-forward implies a quasi-isomorphism at the de~Rham level:
\begin{equation}\label{eq:FFDelpi}
F_\alpha^p\DR\ccE\simeq\bR\pi_*F_{\Del,\alpha}^p\DR\ccE^{\wt{vf}}(*\wt\ccH),
\end{equation}
where, as usual, we set for a filtered $\cD$-module $(\ccM,F_\bbullet\ccM)$,
\[
F^p\DR\ccM=\{F_{-p}\ccM\to \Omega^1\otimes F_{-p+1}\ccM\to\cdots\}.
\]
We will show how to recover the quasi-isomorphism \eqref{eq:FFDelpi} for a suitable modification $\pi:\wt\ccX\to\ccX$ by a direct computation. This will give, in the present setting, a proof of the degeneration at $E_1$ of the spectral sequence attached to the hypercohomology of $\bH^k(\ccX,F_{\alpha+\bbullet}\DR\ccE)$ which only relies on \cite{E-S-Y13} for $\wt{vf}:\wt\ccX\to\PP^1$, and not on the finer results of Theorem \ref{th:1main}. However, the identification at the level of filtered $\cD$-modules, and not only at the level of filtered de~Rham complexes, is needed for the application to Kontsevich bundles given in Theorem \ref{th:2main}.

Let us set, for each $p$ and $\alpha\in[0,1)$,
\[
F_{\rN,\alpha}^p\DR\ccE=\Big\{\Omega_\ccX^p(\log\ccD)([\alpha\ccP])\To{\rd+\rd(vf)}\Omega_\ccX^{p+1}(\log\ccD)([(\alpha+1)\ccP])\to\cdots\Big\}[-p].
\]
Such a filtration already appeared in \cite{Yu12} in the study of the toric case, where the notation $F^\lambda_\mathrm{NP}(\nabla)$ was used (NP for Newton polygon).

\begin{lemme}\label{lem:Yu}
The natural morphism $F_{\rN,\alpha}^p\DR\ccE\to F_\alpha^p\DR\ccE$ is a quasi-isomorphism.
\end{lemme}

\begin{proof}
Let us prove the lemma in the $v$-chart for instance (the proof in the $u$-chart is similar), and let us assume that $H=\emptyset$ for the sake of simplicity, so that $\ccD=\ccP_\red$ (the general case is obtained by a Kunneth formula). Recall that $n=\dim X$. Everything below is thus only valid on $X\times\Afu_v$. Consider the following complexes, with differentials induced by $\rd+\rd(vf)$:
\begin{multline*}
\Phi_1(p):\Big\{\Omega^p_\ccX(\log\ccP_\red)\big([\alpha\ccP]\big)\to \Omega^{p+1}_\ccX(\log\ccP_\red)\big([(\alpha+1)\ccP]\big)\to\cdots\\
\to \Omega^{n+1}_\ccX(\log\ccP_\red)\big([(\alpha+n-p+1)\ccP]\big)\Big\}[-p]
\end{multline*}
and, for $k$ such that $1<k\leq n-p+2$,
\begin{multline*}
\Phi_k(p):\Big\{\Phi_1^{<n-k+2}\to F_0\cO_\ccX(*\ccP_\red)\Omega_\ccX^{n-k+2}\big([(\alpha+n-p-k+2)\ccP]\big)\to\cdots\\
\to\Big(\sum_{j=0}^{k-1}F_j\cO_\ccX(*\ccP_\red)v^j\Big)\Omega_\ccX^{n+1}\big([(\alpha+n-p+1)\ccP]\big)\Big\}.
\end{multline*}
Then $\Phi_1(p)=F_{\mathrm{NP},\alpha}^p\DR\ccE$ and $\Phi_{n-p+2}(p)=F_\alpha^p\DR\ccE$. On each successive quotient $\gr_k^\Phi=\Phi_k/\Phi_{k-1}$, the induced differential becomes $-(v/x^\bme)\sum e_i\rd x_i/x_i$. Except at the first non-zero term, the complex $\gr_k^\Phi$ decomposes into many parts of the Koszul complex associated with $-(v/x^\bme)\{e_1\rd x_1/x_1,\dots,e_\ell\rd x_\ell/x_\ell\}$. By a direct computation, the first non-zero chain map of $\gr_k^\Phi$ is injective. In particular, $\gr_k^\Phi$ is quasi-isomorphic to zero.
\end{proof}

Let us now end the direct proof of \eqref{eq:FFDelpi}. In the discussion of the toric case in \cite[\S4]{Yu12}, a specific resolution $\pi:\wt\ccX\to\ccX$ of $vf$ is constructed inductively by taking blowups along irreducible components of the intersection of the pole set $\ccP_\red$ of $vf$ with its zero set $(f^{-1}(0)\times\PP^1_v)\cup(X\times\{v=0\})$. Then it is shown in \loccit that \eqref{eq:FFDelpi} holds when we replace its left-hand side with $F_{\rN,\alpha}^p\DR\ccE$. Lemma \ref{lem:Yu} allows us to conclude.\qed

\appendix
\section*{Appendix. Brieskorn lattices and Hodge filtration}
\def\thesection{A}
\subsection{Brieskorn lattices in dimension one}\label{subsec:Brieskorndim1}
Let $(M,F_\bbullet M)$ be a holonomic $\Clt$-module equipped with a good filtration. We denote by $G$ the holonomic $\Clt$-module $\CC[\partial_t,\partial_t^{-1}]\otimes_{\CC[\partial_t]}M$. If we identify $\Clt$ with $\Clv$ by the Laplace correspondence $t\mto\partial_v$, $\partial_t\mto-v$, we also regard $G$ as a holonomic $\Clv$-module on which the multiplication by $v$ is bijective. It is therefore also a $\CC[v,v^{-1}]$-module. We will denote by $\wh\loc$ the natural morphism $M\to G$.

The Brieskorn lattice $G_0^{(F_\bbullet)}$ of the filtration $F_\bbullet M$ is defined as the saturation of the filtration by the operator $\partial_t^{-1}$, that is,
\[\tag{$*$}
G_0^{(F_\bbullet)}:=\sum_j\partial_t^{-j}\wh\loc(F_jM)\subset G.
\]
It is naturally a $\CC[\partial_t^{-1}]$-module (equivalently, a $\CC[v^{-1}]$-module). We will also set $G^p_{(F_\bbullet)}=v^{-p}G_0^{(F_\bbullet)}$ for any $p\in\ZZ$. Let us make the link with the definition in \cite[\S1.d]{Bibi05}. Let $p_o$ be an index of generation, so that $F_{p_o+\ell}M=F_{p_o}M+\cdots+\partial_t^\ell F_{p_o}M$ for any $\ell\geq0$. Then the definition in \loccit is
\[\tag{$**$}
G_0^{(F_\bbullet)}=\partial_t^{-p_o}\sum_{j\geq0}\partial_t^{-j}\wh\loc(F_{p_o}M).
\]
Let us check that both definitions give the same result. Let us write $(*)$ as
\[
G_0^{(F_\bbullet)}=\partial_t^{-p_o}\sum_j\partial_t^{-j}\wh\loc(F_{p_o+j}M).
\]
Firstly, for $j\leq0$, we have
\[
\partial_t^{-j}\wh\loc(F_{p_o+j}M)=\wh\loc(\partial_t^{-j}F_{p_o+j}M)\subset \wh\loc(F_{p_o}M),
\]
so we can also write
\begin{align*}
G_0^{(F_\bbullet)}&=\partial_t^{-p_o}\sum_{j\geq0}\partial_t^{-j}\wh\loc(F_{p_o+j}M)\\
&=\partial_t^{-p_o}\sum_{j\geq0}\partial_t^{-j}\big[\wh\loc(F_{p_o}M)+\cdots+\partial_t^j\wh\loc(F_{p_o}M)\big]\\
&=\partial_t^{-p_o}\sum_{j\geq0}\partial_t^{-j}\wh\loc(F_{p_o}M)=(**).
\end{align*}

We now express the Brieskorn lattice of the filtration as obtained by a push-forward operation. We consider the holonomic $\Cltv$-module $M[v,v^{-1}]\mathrm{e}^{vt}$. The $(t,v)$-Brieskorn lattice is the $\CC[t,v^{-1}]$-module defined by the following formula \hbox{(\cf\cite[\S1]{Bibi97b})}:
\[
\cG_0(M,F_\bbullet M)=\bigoplus_jF_jM\cdot v^{-j}\mathrm{e}^{vt}\subset M[v,v^{-1}]\mathrm{e}^{vt},\quad \cG^p(M,F_\bbullet M)=v^{-p}\cG_0(M,F_\bbullet M).
\]
We have $\partial_t\cG^p(M,F_\bbullet M)\!\subset\! \cG^{p-1}(M,F_\bbullet M)$ since $\partial_t F_jM\!\subset\! F_{j+1}M$ and $\partial_t\mathrm{e}^{vt}\!=\!v\mathrm{e}^{vt}$. The relative de~Rham complex
\[
\DR(M[v,v^{-1}]\mathrm{e}^{vt}):=\Big\{M[v,v^{-1}]\mathrm{e}^{vt}\To{\partial_t}M[v,v^{-1}]\mathrm{e}^{vt}\Big\}
\]
has cohomology in degree one only, and we have a natural identification as $\Clv$-modules
\[
\coker\Big[\partial_t:M[v,v^{-1}]\mathrm{e}^{vt}\to M[v,v^{-1}]\mathrm{e}^{vt}\Big]\simeq G
\]
by sending $\sum_jm_jv^{-j}\mathrm{e}^{vt}$ to $\sum_j(-\partial_t)^{-j}\wh\loc(m_j)$. This relative de~Rham complex is filtered by the subcomplexes
\begin{equation}\label{eq:GDR}
\cG^p\DR(M[v,v^{-1}]\mathrm{e}^{vt}):=\Big\{\cG^p(M,F_\bbullet M)\To{\partial_t}\cG^{p-1}(M,F_\bbullet M)\Big\}.
\end{equation}

\begin{lemme}[Push-forward]\label{lem:pushforwardG}
The relative de~Rham complex is strictly filtered by the $\cG^\bbullet$-filtration and, through the previous identification, the filtration on its $H^1\simeq G$ is equal to $G^{\cbbullet-1}_{(F_\bbullet)}$.
\end{lemme}

\begin{proof}
With respect to the previous identification, $\cG^p(M,F_\bbullet M)$ is sent onto $G^p_{(F_\bbullet)}$ according to the definition $(*)$. It remains to show that $(\image\partial_t)\cap \cG^p(M,F_\bbullet M)=\partial_t\cG^{p+1}(M,F_\bbullet M)$ and it is enough to check this for $p=0$.

We have $\partial_t\big(\sum_jm_jv^{-j}\mathrm{e}^{vt}\big)=\sum_j(\partial_tm_j+m_{j+1})v^{-j}\mathrm{e}^{vt}$ and by induction on $j$ we deduce that $(\partial_tm_j+m_{j+1})\in\ F_jM$ for all $j$ implies $m_j\in F_{j-1}M$ for all $j$.
\end{proof}

\begin{remarque}[Rees modules]\label{rem:Rees}
It will also be useful to have the following interpretation in terms of Rees modules (\cf Proof of Theorem \ref{th:Firr}), for which we use the variable~$u$ instead of $\hb$ here. We can twist the Rees module $R_FM$ by~$\mathrm{e}^{t/u}$ by changing the action of $u\partial_t$ to that of $u\partial_t+1$. We denote the corresponding $R_F\Clt$-module by $R_FM\cdot\mathrm{e}^{t/u}$. This is nothing but $\cG_0(M,F_\bbullet M)$ by the change of variable $u=v^{-1}$.

The push-forward of an $R_F\Clt$-module $\cM$ by the constant map $q:\Afu_t=\Spec\CC[t]\to\Spec\CC$ is nothing but the de~Rham complex $\partial_t:\cM\to u^{-1}\cM$, where the latter term is in degree zero. The push-forward $q_+(R_FM\cdot\mathrm{e}^{t/u})$ is thus equal to the complex (with the $\cbbullet$ in degree zero):
\[
R_FM\cdot\mathrm{e}^{t/u}\To{\partial_t}\underset{\bbullet}{u^{-1}R_FM\cdot\mathrm{e}^{t/u}}.
\]
Setting $u=v^{-1}$ we thus have an identification
\[
\xymatrix{
\cG_0(M,F_\bbullet M)\ar[r]^-{\partial_t}\ar@{=}[d]&\cG^{-1}(M,F_\bbullet M)\ar@{=}[d]\\
R_FM\cdot\mathrm{e}^{t/u}\ar[r]^-{\partial_t}&u^{-1}R_FM\cdot\mathrm{e}^{t/u}
}
\]
so we can interpret $G^{-1}_{(F_\bbullet)}$ as $H^0q_+(R_FM\cdot\mathrm{e}^{t/u})$, while $H^{-1}q_+(R_FM\cdot\mathrm{e}^{t/u})=0$.
\end{remarque}

\subsection{Brieskorn lattices in arbitrary dimension}\label{subsec:Brieskorndim>1}
We fix $k$ and we will apply the previous result to $(M,F_\bbullet M)=\cH^{k-\dim X}f_+(\cO_X(*D),(F_\bbullet\cO_X(*H))(*P_\red))$. Here, we identify filtered $\Clt$-modules and $\cD_{\PP^1}(*\infty)$-modules filtered by $\cO_{\PP^1}(*\infty)$-modules. We know that the latter underlies a mixed Hodge module (up to a shift of the filtration), according to \cite{MSaito87}. Working with Rees modules, the strictness property for the push-forward~$f_+$ of mixed Hodge modules can also be stated by saying that the push-forward $f_+\big[(R_F\cO_X(*H))(*P_\red)\big]$ is strict, and thus
\[
\cH^{k-\dim X}f_+\big((R_F\cO_X(*H))(*P_\red)\big)=R_FM.
\]
On the other hand, one checks that
\[
\cH^{k-\dim X}f_+\big((R_F\cO_X(*H))(*P_\red)\cdot\mathrm{e}^{f/u}\big)\simeq\big(\cH^{k-\dim X}f_+(R_F\cO_X(*H)(*P_\red)\big)\cdot\mathrm{e}^{t/u},
\]
and since $\cH^jq_+(R_FM\cdot\mathrm{e}^{t/u})=0$ for $j\neq0$, we conclude
\[
\cH^{k-\dim X}(q\circ f)_+\big((R_F\cO_X(*H))(*P_\red)\cdot\mathrm{e}^{f/u}\big)\simeq\cH^0q_+(R_FM\cdot\mathrm{e}^{t/u}).
\]
The left-hand term is by definition equal to
\[
\bH^k\big(X,(\Omega_X^\cbbullet\otimes_{\cO_X}\!\!(u^{-\cbbullet}R_F\cO_X(*H))(*P_\red),\rd+u^{-1}\rd f)\big),
\]
that is, to $G_0\bH^k_u$ as defined by \eqref{eq:BrieskornHkv}, while the right-hand term is equal to~$G^{-1}$ as defined above.

\backmatter
\def\og{}\def\fg{}
\def\smfedsname{eds.}
\def\smfedname{ed.}
\providecommand{\SortNoop}[1]{}\providecommand{\eprint}[1]{\href{http://arxiv.org/abs/#1}{\texttt{arXiv\string:\allowbreak#1}}}
\providecommand{\bysame}{\leavevmode ---\ }
\providecommand{\og}{``}
\providecommand{\fg}{''}
\providecommand{\smfandname}{\&}
\providecommand{\smfedsname}{\'eds.}
\providecommand{\smfedname}{\'ed.}
\providecommand{\smfmastersthesisname}{M\'emoire}
\providecommand{\smfphdthesisname}{Th\`ese}

\end{document}